\documentclass[oneside]{compositio}

\usepackage[theoremfont]{newtxtext}
\usepackage[varvw,smallerops]{newtxmath}

\usepackage{verbatim}

\usepackage{tikz-cd} 

\usepackage{array}
\newcolumntype{P}[1]{>{\raggedright\arraybackslash}p{#1}}
\usepackage{tabularx}
\usepackage{float}

\usepackage{hyperref}
\hypersetup{
    colorlinks=false, 
    linktoc=all,     
    linkcolor=black,  
}

\usepackage{placeins}

\newtheorem{thm}{Theorem}[section]
\newtheorem{cor}[thm]{Corollary}
\newtheorem{prop}[thm]{Proposition}

\theoremstyle{definition}
\newtheorem{defn}[thm]{Definition}

\theoremstyle{remark}
\newtheorem{exmp}[thm]{Example}
\newtheorem{rem}[thm]{Remark}

\newcommand{\Q}{\mathbb{Q}} 
\newcommand{\C}{\mathbb{C}} 
\renewcommand{\P}{\mathbb{P}}
\newcommand{\Z}{\mathbb{Z}}
\newcommand{\N}{\mathbb{N}}

\newcommand{\G}{\mathbb{G}}
\newcommand{\E}{\mathcal{E}}
\newcommand{\Oo}{\mathcal{O}}
\renewcommand{\L}{\mathbb{L}}
\newcommand{\Ll}{\mathcal{L}}
\newcommand{\M}{\mathcal{M}}
\newcommand{\Hh}{\mathcal{H}}
\newcommand{\oM}{{\mkern5mu\overline{\mkern-5mu\M\mkern-1.5mu}\mkern2mu}}
\newcommand{\F}{\mathcal{F}}

\DeclareMathOperator{\Spec}{Spec}

\begin{document}

\title{Localization, cobordism, and twisted $\chi_y$-genera of $\oM_{0,n}$}

\author{Benjamin Ellis-Bloor}

\email{benjaminellis-bloor2026@u.northwestern.edu}

\address{Department of Mathematics, Northwestern University, Evanston,
  IL 60208}

\classification{Primary: 14H10. Secondary: 14C17, 55N22}

\begin{abstract}

We use the method of equivariant localization to obtain a recursion for the cobordism-valued one-point psi-class intersections on $\oM_{0,n}$, and hence the cobordism classes $[\oM_{0,n}]$. Restricting to the hyperbolic formal group law, we recover the Poincar\'{e} polynomials of $\oM_{0,n}$ as a special case, and obtain new formulas for Hirzebruch $\chi_y$-genera of $\oM_{0,n}$ twisted by tautological line bundles. 

\end{abstract}

\maketitle

\section{Introduction}

Intersection theory on the moduli space $\oM_{0,n}$ of genus zero stable curves with $n$ marked points can be extended naturally to cobordism. In this article, we continue the development of this extension begun in \cite{EllisBloor}, and its purpose is threefold, which we outline below. As in \cite{EllisBloor}, we work in the setting of algebraic cobordism $\Omega^*$ due to Levine--Morel \cite{AlgCob} and Levine--Pandharipande \cite{LevinePandharipande}, however our results also apply in the setting of complex cobordism. Recent work of Debnath--Zeng \cite{DebnathZeng} and Gandhi--Partida \cite{GandhiPartida} study the algebraic cobordism rings of $\oM_{0,n}$ and wonderful varieties of hyperplane arrangements (at the maximal building set), respectively. 

First, we give an alternative definition of the \textit{psi-classes} in cobordism, which simplifies the analogue of the string equation, introduced by Witten \cite{Witten} in ordinary cohomology and extended to cobordism in the author's previous work \cite{EllisBloor}. The string equation gives a recursion for all psi-class intersections in cobordism. This change of variables also yields a particularly simple analogue of the dilaton equation, also originally due to Witten \cite{Witten}. Furthermore, we give a closed formula for the indecomposable part of the cobordism class $[\oM_{0,n}]$. 

Second, we show that the string equation simplifies considerably if we restrict to the hyperbolic formal group law, which yields closed formulas for one-point psi-class intersections. We also explain how this can be used to recover formulas for the Poincar\'{e} polynomials of $\oM_{0,n}$ due to Keel \cite{Keel}, Manin \cite{Manin} and Getzler \cite{Getzler} as a special case, and we give new computations of certain twisted $\chi_y$-genera of $\oM_{0,n}$. 

Third, we prove an alternative recursion for the one-point psi-class intersections, and hence the cobordism classes $[\oM_{0,n}]$, using the equivariant localization formula in algebraic cobordism developed by Krishna \cite{Krishna1}, applied to the moduli space $\oM_{0,n}(\P^1,1)$ of genus zero stable maps to $\P^1$ of degree one. This then gives a second proof of the closed formulas obtained using the hyperbolic formal group law. 

\subsection{The string and dilaton equations in cobordism}

We will work with the usual definition of the Chern classes and pushforward maps in algebraic cobordism \cite{AlgCob,LevinePandharipande}, but we use a change of variables in our convention for the universal formal group law. For $X$ a smooth variety over an algebraically closed field $k$ of characteristic zero and a line bundle $L \to X$, we define
\begin{equation*}
\psi(L) := -c_{1}^{\Omega}(L^\vee) \in \Omega^1(X),
\end{equation*}
where $c_{1}^\Omega(L^\vee)$ is the first Chern class of $L^\vee$ in algebraic cobordism. Our convention for the universal formal group law $x+_{\Omega} y \in \Omega^{*}(\Spec k)[\![x,y]\!]$ is such that for two line bundles $L,M \to X$, we have
\begin{equation} \label{OurConvention}
\psi(L \otimes M) = \psi(L)+_{\Omega} \psi(M).
\end{equation}
We let $\overline{x} \in \Omega^*(\Spec k)[\![x]\!]$ be the inverse of the universal formal group law, defined as the unique power series satisfying $x+_{\Omega} \overline{x} = 0$. Since $\Omega^*$ is the universal oriented cohomology theory, there is an orientation map $\Omega^* \to \text{CH}^*$ to the Chow ring, and under this map $\psi(L)$ is sent to $c_{1}^{\text{CH}}(L)$, since
\begin{equation*}
-c_{1}^{\text{CH}}(L^\vee) = c_{1}^{\text{CH}}(L).
\end{equation*}
For $X$ projective, and $\alpha \in \Omega^*(X)$, we let $\int_{X}^{\Omega} \alpha := p_{*}(\alpha)$, where $p_{*} \colon \Omega^{*}(X) \to \Omega^{*-\dim X}(\Spec k)$ is the pushforward map induced by the projection $p \colon X \to \Spec k$. 

For $n \ge 3$ and $1 \le i \le n$, let $L_{i}$ denote the tautological line bundle on $\oM_{0,n}$ corresponding to the $i^{\text{th}}$ marked point. We define the corresponding psi-class in algebraic cobordism $\Omega^*$ to be
\begin{equation*}
\psi_i = \psi(L_i) \in \Omega^1(\oM_{0,n}).
\end{equation*}
This modified definition of the psi-classes is a generalization to cobordism of the modification in $K$-theory given in \cite[Section 5.2]{EllisBloor}. For integers $d_{1},\dots,d_{n} \ge 0$ with $|d| := d_{1}+\dots+d_{n}$, we define
\begin{equation*}
  \psi^d = \psi_1^{d_1} \dotsm \psi_n^{d_n} ,
\end{equation*}
and for each $1 \le i \le n$, we let
\begin{equation*}
  \psi^d/\psi_i =
  \begin{cases}
    \psi_1^{d_1} \dotsm \psi_i^{d_i-1} \dotsm \psi_n^{d_n} , & d_i>0 , \\
    0 , & d_i=0 .
  \end{cases}
\end{equation*}
For $T$ a subset of
$\{1,2,\dots,n\}$ with $\vert{T}\vert \ge 2$, $\vert{T^c}\vert \ge 2$ and
$\vert{T \cap \{1,2,3\}}\vert \le 1$, let
\begin{equation*}
i_{T} \colon D^{T} \to \oM_{0,n}
\end{equation*}
be the inclusion of the divisor
$D^{T} = \oM_{0,\vert{T}\vert+1} \times \oM_{0,\vert{T^c}\vert+1}$ whose
generic points are curves with a single node, and the two components
containing markings labeled by the elements of $T$ and $T^{c}$. We denote by $\bullet$ the node-branch marking in
either factor of the product, and write
$L_{a} = \text{pr}_{1}^{*}L_{\bullet}$ and
$L_{b} = \text{pr}_{2}^{*}L_{\bullet}$ for the pullbacks under the two
projections. 

The following is a simplification of the string equation in cobordism proved in previous work of the author \cite{EllisBloor}, using our modified definition of the psi-classes.
\begin{thm}[String equation \cite{EllisBloor}] \label{StringEquationIntro}
For $n \ge 3$ and $0 \le |d| \le n-2$, we have
\begin{multline*} 
    \int_{\oM_{0,n+1}}^\Omega 1\cdot\psi^d  \\
    \begin{aligned}
      &= [\P^{1}] \int_{\oM_{0,n}}^\Omega \psi^d
      +\sum_{i=1}^{n}\int_{\oM_{0,n}}^\Omega \psi^d/\psi_i+ \sum_{T}\int_{D^{T}}^\Omega \biggl(
      \frac{1}{\overline{\psi}_{a}\bigl(\psi_{a}+_{\Omega}\psi_{b}\bigr)}
      - \frac{1}{\bigl(\psi_{a}+_{\Omega}\psi_{b} \bigr) \psi_{b}} + \frac{1}{\psi_a\psi_b}
      \biggr) i_{T}^{*} \psi^d. \\
    \end{aligned}
\end{multline*}
\end{thm}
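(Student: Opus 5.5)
The plan is to compute the left-hand side by pushing forward along the forgetful map $\pi \colon \oM_{0,n+1} \to \oM_{0,n}$ that forgets the last marking. Since $\int_{\oM_{0,n+1}}^\Omega = \int_{\oM_{0,n}}^\Omega \circ \pi_*$, it suffices to compute $\pi_*(\psi^d)$ in $\Omega^*(\oM_{0,n})$, where $\psi^d$ involves only the first $n$ psi-classes on $\oM_{0,n+1}$.

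\textbf{Comparison of line bundles.} On $\oM_{0,n+1}$ we have $L_i = \pi^*L_i \otimes \Oo(D_{i,n+1})$, where $D_{i,n+1}$ is the section divisor on which $i$ and $n+1$ bubble off together. By \eqref{OurConvention},
\begin{equation*}
\psi_i = \pi^*\psi_i +_\Omega \psi(\Oo(D_{i,n+1})).
\end{equation*}
The divisors $D_{i,n+1}$ are pairwise disjoint. Moreover $\psi(\Oo(D_{i,n+1}))\cdot \psi_i$ restricts to $\psi(L_{\bullet})$ on $D_{i,n+1}\cong\oM_{0,n}$ composed with the fiber factor, which vanishes because the bubble is $\oM_{0,3}$. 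Expanding $\psi_i^{d_i}$ therefore gives $\pi^*\psi^d$ plus correction terms supported on the sections $D_{i,n+1}$. These corrections arise from the formal group law: one has $(x+_\Omega y)^d = x^d + y\cdot(\text{stuff})$ with $y^2$ reducible via the self-intersection $\psi(\Oo(D))|_D = \psi(N_D) = \overline{\psi}_i$-type expressions. Pushing forward yields the term $\sum_i \int \psi^d/\psi_i$, exactly as in cohomology. The point of the modified convention $\psi = -c_1^\Omega(L^\vee)$ is that the correction collapses to precisely $\psi^d/\psi_i$, with no extra formal group law coefficients.

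\textbf{Pushforward of $\pi^*\psi^d$.} The main term is $\pi_*\pi^*(\psi^d) = \psi^d\cdot\pi_*(1)$. In cobordism $\pi_*(1)$ is not a number but the class of the universal curve, with fiber $\P^1$ generically. I will compute $\pi_*(1)$ by a Quillen-type formula: write $\pi$ as a family of nodal curves and use the fact that the relative tangent bundle class on the smooth locus gives $[\P^1]$. Over each boundary divisor $D^T$ the fibers are nodal unions of two $\P^1$'s, so the correction is supported on $D^T$. Following \cite{EllisBloor}, I expect
\begin{equation*}
\pi_*(1) = [\P^1] + \sum_T i_{T*}\bigl(F(\psi_a,\psi_b)\bigr).
\end{equation*}
Here $F$ is determined by comparing a nodal degeneration to smooth $\P^1$: it is the double-point formula in algebraic cobordism of Levine--Pandharipande, which expresses $[\P^1]$ in terms of $[\P^1]+[\P^1]-[\P(\Oo\oplus N)]$-type classes, with the normal bundles of the two branches being $L_a^\vee, L_b^\vee$.

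\textbf{Converting to the stated expression.} Translating the double-point correction into the given expression $\frac{1}{\overline\psi_a(\psi_a+_\Omega\psi_b)} - \frac{1}{(\psi_a+_\Omega\psi_b)\psi_b} + \frac{1}{\psi_a\psi_b}$ amounts to writing pushforwards from projective bundles via the cobordism residue formula $\pi_*\bigl(1_{\P(\Oo\oplus M)}\bigr) = \frac{1}{\psi(M)} + \frac{1}{\overline{\psi(M)}}$ (up to the sign convention), and simplifying with the modified inverse. Projection formula then gives $\int_{D^T}(\cdots) i_T^*\psi^d$. Since the statement is a simplification of the string equation in \cite{EllisBloor}, the most economical route is instead to take that earlier formula and substitute $c_1^\Omega(L) = -\psi(L^\vee)$, i.e.\ rewrite it in terms of $\psi$ and $\overline{\psi}$.

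\textbf{Main obstacle.} The hard step is this algebraic simplification of the boundary correction: verifying that the rational expression in formal-group-law variables becomes an honest power series and matches the three displayed fractions. I would first check this by expanding in the hyperbolic formal group law, then prove the identity in general using $\overline{x}$ and $x+_\Omega y$ manipulations.
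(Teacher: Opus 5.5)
You have the same skeleton as the paper: push forward along $\pi\colon\oM_{0,n+1}\to\oM_{0,n}$, write $\psi_i=\pi^*\psi_i+_\Omega D_i$ with $D_i=\psi(\Oo(D^{\{i,n+1\}}))$, and use $D_iD_j=0$ and $\psi_iD_i=0$. The projection formula then gives $\pi_*(1)\psi^d$, and $\pi_*(1)=[\P^1]+\sum_T(i_T)_*(\cdots)$ is quoted from \cite[Proposition 3.6]{EllisBloor}. There it is derived from Keel's iterated blow-up of $\oM_{0,n}\times\P^1$ along codimension-two centers plus a projective-bundle pushforward. A fiberwise comparison of a nodal fiber with $\P^1$, as you describe the double-point idea, does not by itself give a relation in $\Omega^*(\oM_{0,n})$. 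So the boundary sum needs no new algebra, and your ``main obstacle'' is not where the work lies. The work is in the step you call ``exactly as in cohomology'', and there your sketch lacks the ingredient that makes it go through.

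The gap is that $D_i$ is not the fundamental class of the section. Let $f\colon D^{\{i,n+1\}}\to\oM_{0,n+1}$ be the inclusion and $\widetilde D_i=c_1^\Omega(\Oo(D^{\{i,n+1\}}))=f_*(1)$. Then $D_i=\overline{-\widetilde D_i}$. Together with $f^*\widetilde D_i=c_1^\Omega(L_i^\vee)=-\psi_i$, this gives $\pi_*(D_i^\ell)=-\overline\psi_i^{\,\ell}/\psi_i$, and in particular $\pi_*(D_i)=-\overline\psi_i/\psi_i\neq 1$.

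Knowing only the restriction $D_i|_{D}=\overline\psi_i$, as in your reduction of $y^2$, is not enough. If you push forward as though $D_i=f_*(1)$, you get $-\psi_i^{d_i}/\overline\psi_i$ in place of $\psi_i^{d_i-1}$. This is already wrong for the hyperbolic law: it produces an extra $\mu_1\psi^d$ for each $i$ with $d_i>0$.

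With the correct normalization, write $x+_\Omega y=x+y+\sum a_{j\ell}x^jy^\ell$. The section correction becomes
\[
-\frac{1}{\psi_i}\Bigl(\overline\psi_i+\sum_{j,\ell\ge 1}a_{j\ell}\psi_i^j\overline\psi_i^{\,\ell}\Bigr)\psi^d/\psi_i=\psi^d/\psi_i ,
\]
and this holds precisely because $\psi_i+_\Omega\overline\psi_i=0$. This computation is essentially the whole proof, and your proposal needs to supply it.

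Substituting $c_1^\Omega(L)=-\psi(L^\vee)$ into the old string equation is not a shortcut either. The new monomial $\psi^d$ is a power series in the old classes, so the middle term would have to be re-derived anyway, which is this same calculation.
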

The first and third terms on the right-hand side in Theorem \ref{StringEquationIntro} do not appear in the string equation in the Chow ring. The first term appears first in $K$-theory (the multiplicative formal group law), and the third term appears first in the hyperbolic formal group law (see Section \ref{PoincareIntro}). 

It was remarked by Givental \cite{Givental} that there is no simple formula for the string equation in (rational) quantum cobordism theory, but that the dilaton equation holds in any genus. We prove geometrically in integral cobordism, using our modified definition of the psi-classes, that in genus zero the dilaton equation in cobordism has no extra terms compared to the ordinary dilaton equation, unlike with the string equation.

\begin{prop}[Dilaton equation] \label{DilatonEquationIntro}
For $n \ge 3$ and $0 \le |d| \le n-3$, we have
\begin{equation*}
\int_{\oM_{0,n+1}}^\Omega \psi_{n+1} \cdot \psi^d = (n-2)\int_{\oM_{0,n}}^\Omega \psi^d.
\end{equation*}
\end{prop}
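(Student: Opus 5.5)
We follow the classical geometric proof of the dilaton equation and check that, with the convention $\psi(L)=-c_1^\Omega(L^\vee)$, no correction terms appear. Let $\pi\colon \oM_{0,n+1}\to\oM_{0,n}$ forget the last marking; it is the universal curve, with sections $\sigma_1,\dots,\sigma_n$ whose images are the divisors $D_{i,n+1}$. Write $L_i'$ for the tautological bundles on $\oM_{0,n+1}$. There are three steps.

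\textbf{Comparison of line bundles.} The standard comparison is $L_i'\cong \pi^*L_i\otimes\Oo(D_{i,n+1})$ for $i\le n$, and $L_{n+1}'\cong \omega_\pi(D)$ with $D=\sum_{i}D_{i,n+1}$. Since $D_{i,n+1}\cdot D_{i,n+1}$ restricted to $D_{i,n+1}$ is governed by $L_{n+1}'|_{D_{i,n+1}}$, which is trivial there (a three-pointed $\P^1$), we get $\psi_{n+1}\cdot[D_{i,n+1}]=0$ in $\Omega^*$. For this we use $\psi(L)\cdot[Z]=i_{Z*}\psi(L|_Z)$ and $\psi(\Oo)=0$.

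\textbf{Eliminating the corrections.} By \eqref{OurConvention},
\[
\psi_i'=\pi^*\psi_i+_\Omega\psi(\Oo(D_{i,n+1})),
\]
and every monomial of $x+_\Omega y-x$ is divisible by $y=\psi(\Oo(D_{i,n+1}))$. We will show that $y\cdot\psi_{n+1}=0$. Here $\psi(\Oo(D))=-c_1^\Omega(\Oo(-D))$. Because the formal group law differs, this is not literally the class $[D]$, so we will avoid it as follows. Restricting to $D_{i,n+1}$, $y\cdot \psi_{n+1}$ is a multiple of $\psi_{n+1}$ times a power series in classes. It vanishes as soon as we can write $y=[D_{i,n+1}]\cdot u$ for some $u$; such a $u$ exists since $c_1^\Omega(M)=[Z]$ for $M$ with a section vanishing transversally on $Z$, and $c_1^\Omega(M^\vee)$ is $[Z]$ times a unit power series. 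Hence
\[
\psi_{n+1}\psi'^d=\psi_{n+1}\,\pi^*\psi^d,
\]
and the projection formula gives
\[
\int^\Omega_{\oM_{0,n+1}}\psi_{n+1}\psi'^d=\int^\Omega_{\oM_{0,n}}\psi^d\cdot\pi_*\psi_{n+1}.
\]

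\textbf{Computing $\pi_*\psi_{n+1}\in\Omega^0(\oM_{0,n})$.} Since $\Omega^0(X)\cong\Omega^0(\Spec k)\cong\Z$ for $X$ smooth and connected (degree-zero part, using that $\Omega^{<0}$ of a point vanishes), it suffices to compute the degree on a fibre. Restricting to a smooth fibre $C\cong\P^1$ reduces this to $\int^\Omega_{\P^1}\psi(\omega_{\P^1}(n))$. For a line bundle $M$ of degree $m$ on $\P^1$ we have $\int_{\P^1}^\Omega c_1^\Omega(M)=m$ when the integral lands in $\Omega^0=\Z$, since the higher terms of $c_1$ vanish for dimension reasons. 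Therefore $\int^\Omega_{\P^1}\psi(L)=-\deg(L^\vee)=\deg L=n-2$. This requires base change of $\pi_*$ to a fibre, which holds because $\pi$ is flat and the degree-$0$ class is locally constant.

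The main obstacle is the second step: rigorously showing that the formal-group-law correction terms in $\psi_i'$ are annihilated by $\psi_{n+1}$ in integral cobordism, rather than only in Chow. We handle it via the factorization $y=[D_{i,n+1}]\cdot(\text{unit})$ together with the triviality of $L_{n+1}'$ on $D_{i,n+1}$. The degree assumption $|d|\le n-3$ ensures the dimensions match.
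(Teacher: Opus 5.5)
Your first two steps are correct and match the paper's opening reduction. From $\psi_i=\pi^*\psi_i+_\Omega D_i$ and $\psi_{n+1}D_i=0$ (because $L_{n+1}$ is trivial on $D^{\{i,n+1\}}$) you get $\int_{\oM_{0,n+1}}^\Omega\psi_{n+1}\psi^d=\int_{\oM_{0,n}}^\Omega\pi_*(\psi_{n+1})\,\psi^d$.

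\textbf{The gap is in your third step.} The assertion $\Omega^0(X)\cong\Z$ for smooth connected $X$ is false. The Lazard ring is concentrated in nonpositive degrees: it is $\Omega^{>0}(\Spec k)$ that vanishes, not $\Omega^{<0}$. So $\Omega^0(X)$ contains classes $a\cdot[Z\to X]$ with $a\in\L^{-k}$ and $Z$ of codimension $k$.

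This already fails for $n=4$. Here $\Omega^*(\oM_{0,4})=\Omega^*(\P^1)=\L^*[h]/(h^2)$, so $\Omega^0(\oM_{0,4})=\Z\cdot 1\oplus\Z\,[\P^1]h$. Write $\pi_*\psi_5=2\cdot 1+c\,[\P^1]h$. Then the left-hand side for $d=0$ is $(2+c)[\P^1]$, and restricting to a fibre cannot see $c$.

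In general, the fibre computation only determines the $\Z$-component. It gives $\pi_*\psi_{n+1}=(n-2)\cdot1+\varepsilon$, where $\varepsilon$ is built from $\L^{-k}$-multiples of codimension-$k$ classes with $k\ge1$. Such an $\varepsilon$ pairs to zero with $\psi^d$ when $|d|=n-3$, since $\Omega^{>\dim}=0$. It does not pair to zero when $|d|<n-3$, which is exactly the range where the statement goes beyond the Chow ring. You also cannot argue that $\varepsilon$ vanishes by some ``local constancy'' of degree-zero classes. The string equation shows that $\pi_*(1)$ equals $[\P^1]$ plus nonzero boundary corrections $\sum_T(i_T)_*(\cdots)$.

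\textbf{What is missing} is a computation of $\pi_*\psi_{n+1}$ in all of $\Omega^0(\oM_{0,n})$. The paper first writes $L_{n+1}=\omega_\pi\bigl(\sum_i D^{\{i,n+1\}}\bigr)$. It then uses $\widetilde D_i\widetilde D_j=0$, $c_1^\Omega(L_{n+1}^\vee)\widetilde D_i=0$ and $\pi_*\widetilde D_i=1$ to reduce to $\pi_*\psi_{n+1}=n\cdot 1-\pi_*c_1^\Omega(\omega_\pi^\vee)$.

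It proves $\pi_*c_1^\Omega(\omega_\pi^\vee)=2\cdot 1$ using Keel's tower of blow-ups $\oM_{0,n+1}\cong B_{n-2}\to\cdots\to B_1=\oM_{0,n}\times\P^1$. Set $\alpha_k=c_1^\Omega(\omega^\vee_{B_k/\oM_{0,n}})$; one shows $(f_{k+1})_*\alpha_{k+1}=\alpha_k$ at each stage. This holds because two contributions cancel exactly:
\begin{itemize}
\item the boundary correction in $(f_{k+1})_*(1)$, evaluated using $i_{k,T}^*\alpha_k=-\psi_b$ (which comes from $N_{D^T/B_k}=(L_a^\vee\otimes L_b^\vee)\oplus L_b^\vee$);
\item the pushforward of the exceptional-divisor term, computed by the projective bundle formula.
\end{itemize}
Only on $B_1$, which is a product, is a fibre computation legitimate, giving $\int_{\P^1}^\Omega c_1^\Omega(\Oo(2))=2$.

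Without an argument of this kind, or some other way to show $\varepsilon=0$, your proof establishes only the case $|d|=n-3$, i.e.\ the classical dilaton equation.
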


Theorem \ref{StringEquationIntro} can in particular be used to calculate the integral cobordism classes $[\oM_{0,n}] \in \Omega^*(\Spec k)$, as proved in \cite{EllisBloor}. We know that $\Omega^*(\Spec k)$ is isomorphic to the Lazard ring $\L^* \cong \Z[u_{1},u_{2},u_{3},\dots]$ which classifies (one-dimensional and commutative) formal group laws, where $u_{i}$ has degree $-i$. Under the choice of integral generators given in \cite{EllisBloor}, we prove the following closed formula for the indecomposable part of the cobordism classes $[\oM_{0,n}]$.

\begin{prop} \label{IndecomposableIntro}
For $n \ge 2$, under the generator normalization of \cite{EllisBloor}, the coefficient of $u_{n-1}$ in $[\oM_{0,n+2}]$ is given by the closed formula
\begin{equation*}
\frac{(-1)^n}{\nu(n)}\left(\frac{1}{2}{2n \choose n}-1\right),
\end{equation*}
where
\begin{equation*}
  \nu(n) =
  \begin{cases}
    p , & \text{if $n = p^r$ for some prime $p$ and $r \in \N$} , \\
    1 , & \text{otherwise}.
  \end{cases}
\end{equation*}
\end{prop}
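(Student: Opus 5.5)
The coefficient of $u_{n-1}$ in $[\oM_{0,n+2}]$ is governed by a single characteristic number, so the plan is to compute that number and then divide by the normalization. Modulo decomposables, any degree $-(n-1)$ class $[X]$ with $\dim X = n-1$ is detected by the Milnor genus $s_{n-1}(X)=\int_X \sum_j x_j^{n-1}$, where the $x_j$ are the Chern roots of $T_X$. The generators $u_i$ of \cite{EllisBloor} should be normalized so that $s_i(u_i) = \pm\nu(i+1)$. This is the standard fact that the indecomposable quotient of $\L$ in degree $-i$ is generated by a class with Milnor number $p$ if $i+1=p^r$, and $1$ otherwise. Hence
\begin{equation*}
[u_{n-1}]\,[\oM_{0,n+2}] = \pm\, s_{n-1}(\oM_{0,n+2})/\nu(n),
\end{equation*}
with the sign fixed by the normalization; I expect it to produce the factor $(-1)^n$. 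It therefore suffices to prove
\begin{equation*}
s_{n-1}(\oM_{0,n+2}) = \pm\left(\tfrac12\tbinom{2n}{n}-1\right).
\end{equation*}

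To compute $s_{n-1}$ I will use the additive (Chow) formal group law. Restricting Theorem \ref{StringEquationIntro} to it would need the whole tangent bundle, so instead I will use a direct geometric description. Write $N = n+2$. Keel's blow-up description, or equivalently the boundary stratification, gives the additivity property
\begin{equation*}
s_m(\widetilde{X}) = s_m(X) + s_m(\text{exceptional contribution})
\end{equation*}
under blowing up a smooth center. Milnor numbers of blow-ups are computable: if $Z\subset X$ has codimension $c$ and normal bundle $N_Z$, then the difference $s_m(\mathrm{Bl}_Z X)-s_m(X)$ depends only on $(Z,N_Z)$ as a class over $Z$. For $m=\dim X$ it vanishes unless $Z$ is a point or a very special case. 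More cleanly, I would realize $\oM_{0,N}$ by Kapranov's iterated blow-up of $\P^{N-3}$ along the strict transforms of linear subspaces spanned by $N-1$ general points. Starting from $s_{n-1}(\P^{n-1}) = n$, I will add the contributions of each blow-up. A blow-up along a center $Z$ of dimension $k$ changes $s_m$ by a quantity determined by $s$-numbers of $Z$ and its normal bundle. Because each center is itself a product of smaller $\oM_{0,j}$'s (times a projective bundle structure), and $s_m$ vanishes on nontrivial products, only the point centers and the centers that are projective spaces contribute. Those contributions are explicit binomial expressions.

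Alternatively, and probably more cleanly, I will apply the hyperbolic or additive specialization of the string equation to the Chern roots. Here the key identity is that, since $s_m$ kills decomposables, only the boundary-free part of the string recursion survives. The first term $[\P^1]\cdot\int\psi^d$ is decomposable and drops out. The divisor terms $D^T$ are products, so they drop out unless one factor is a point. This leaves a linear recursion in $N$ for $s_{N-3}(\oM_{0,N})$. Solving it with the generating function $\sum_n \binom{2n}{n}t^n=(1-4t)^{-1/2}$ should yield $\tfrac12\binom{2n}{n}-1$; as sanity checks, $n=2$ gives $\oM_{0,4}=\P^1$ with value $2$, and $n=3$ gives $\oM_{0,5}$, the del Pezzo surface of degree $5$, with $s_2=c_1^2-2c_2=5-14=-9$, matching $\pm(10-1)$.

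The main obstacle is isolating exactly which boundary or blow-up contributions survive modulo decomposables, and summing them into the central-binomial closed form. I would attack this first through the Kapranov blow-up count. In that count, a center of dimension $k$ isomorphic to $\P^k$, with normal bundle a sum of copies of $\Oo(-1)$, contributes a term like $\pm\binom{n}{k+1}$ times a universal binomial. The final sum then collapses via Vandermonde-type identities to $\tfrac12\binom{2n}{n}-1$.
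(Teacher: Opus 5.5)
Your first step matches the paper: from Hazewinkel's logarithm modulo $I^2$ and Mishchenko's formula, $[X]\equiv s_{n-1}(X)\,u_{n-1}/\nu(n) \bmod I^2$. This fixes $s_{n-1}(u_{n-1})=+\nu(n)$. So the sign $(-1)^n$ does not come from the normalization. It sits in the Milnor number itself, as your check $s_2(\oM_{0,5})=-9$ already shows, and you must prove $s_{n-1}(\oM_{0,n+2})=(-1)^n\bigl(\frac12\binom{2n}{n}-1\bigr)$ with the sign.

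\textbf{The gap.} You never compute $s_{n-1}(\oM_{0,n+2})$, and both routes you sketch rest on a false vanishing principle. That $s_m$ kills products $X\times Y$ does not mean strata that are products or projective bundles contribute nothing. What enters is a Chern number of the stratum \emph{together with its normal bundle}, and that mixes the factors. For a blow-up, $[\mathrm{Bl}_ZX]-[X]=[\P_{\P(N_Z)}(\Oo\oplus\Oo(-1))]-[\P_Z(N_Z\oplus\Oo)]$, so the jump in $s_m$ is a polynomial in the Chern classes of $N_Z$, not an $s$-number of $Z$.

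Concretely, build $\oM_{0,7}$ by Kapranov's construction: blow up $\P^4$ at $6$ points, then $15$ lines, then $20$ planes. Here $s_4(\P^4)=5$.
\begin{itemize}
\item Each point changes $s_4$ by $-5$.
\item Each line, a $\P^1$ with normal bundle $\Oo(-1)^{\oplus 3}$, changes it by $0$.
\item Each plane center is $\mathrm{Bl}_3\P^2$ with normal bundle $\Oo(-2H+E_1+E_2+E_3)^{\oplus 2}$. It is neither a product of $\oM_{0,j}$'s nor a projective space, yet it changes $s_4$ by $-5$.
\end{itemize}
The correct total $-125$ needs the planes; keeping only points and lines gives $-25$.

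The string-equation route fails the same way. Modulo $I^2$ the term $[\P^1]\int\psi^d$ drops, but \emph{every} divisor term survives through the indecomposable parts of the coefficients of the correction series $\frac{1}{\overline{\psi}_a(\psi_a+_{\Omega}\psi_b)}-\cdots$. For $\oM_{0,7}$, the ten divisors $D^T\cong\oM_{0,4}\times\oM_{0,4}$ contribute $-10u_4$ of the total $-25u_4$. Moreover, the additive and hyperbolic specializations cannot detect the $u_{n-1}$-coefficient for $n\ge 4$, since every element of $\Z[\mu_1,\mu_2]$ of degree $\le -3$ is decomposable. So there is no linear recursion of the shape you describe, and the hoped-for Vandermonde collapse has nothing to act on.

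\textbf{What is missing} is an actual formula for $\mathrm{ch}_{n-1}$ of the tangent bundle. The paper takes it from Eberhardt: $\mathrm{ch}(T^\vee_{\oM_{0,n+2}})=n-1+\sum_{j\ge1}(\kappa_j-\Delta_j)/j!$, with $\Delta_j=\frac12\sum_T(i_T)_*\bigl((\psi_a+\psi_b)^{j-1}\bigr)$ summed over \emph{all} boundary divisors. For $|T|=k$ one has $\int_{D^T}(\psi_a+\psi_b)^{n-2}=\binom{n-2}{k-2}$. This gives $(-1)^{n-1}s_{n-1}(\oM_{0,n+2})=\int\kappa_{n-1}-\int\Delta_{n-1}=1-\frac12\sum_k\binom{n+2}{k}\binom{n-2}{n-k}=1-\frac12\binom{2n}{n}$. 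The central binomial coefficient therefore comes precisely from the product boundary strata your heuristic discards. Your Kapranov and string-equation routes could in principle be repaired, but only by computing all of these normal-bundle contributions, and that computation is the substance of the proof.
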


\subsection{Hyperbolic formal group law and twisted $\chi_y$-genera of $\oM_{0,n}$} \label{PoincareIntro}

For formal variables $\mu_1$ and $\mu_2$ of degrees $-1$ and $-2$, the \textit{hyperbolic formal group law} (see \cite{LZZ}), in our convention given by \eqref{OurConvention}, is the formal group law over $\Z[\mu_1,\mu_2]$ defined by
\begin{equation*}
x+_{H} y = \frac{x+y+\mu_1 xy}{1+\mu_2 xy}.
\end{equation*}
We will additionally adjoin formal variables $\alpha$ and $\beta$, both of degree $-1$, with the relations $\mu_1 = \alpha+\beta$ and $\mu_2 = -\alpha\beta$. The multiplicative formal group law (corresponding to connective $K$-theory) is obtained by setting $\mu_2 = 0$. Alternatively, setting $\mu_1 = 0$ gives the \textit{Lorentz formal group law},
\begin{equation*}
x+_{L}y = \frac{x+y}{1+\alpha^2 xy}. 
\end{equation*}
Importantly, the third term on the right-hand side of Theorem \ref{StringEquationIntro} simplifies to a constant when restricted to the hyperbolic formal group law, namely
\begin{equation} \label{HyperbolicSimplification}
\frac{1}{\overline{x}(x+_{H} y)}-\frac{1}{(x+_{H} y)y} +\frac{1}{xy} = -\mu_2.
\end{equation}
Using this, we prove the following closed formula for the one-point psi-class intersections in the hyperbolic formal group law. Form the exponential generating function
\begin{equation*}
\Hh(z,q) = \sum_{n \ge 3} \frac{q^{n-1}}{(n-1)!}\sum_{j = 0}^{n-3} z^{j} \int_{\oM_{0,n}}^{H} \psi_1^j.
\end{equation*}
For any parameter $\lambda$, define the series
\begin{equation*}
\Phi_{\lambda}(q) = \sum_{j \ge 1} \lambda^{j-1}\frac{q^j}{j!} = \frac{e^{\lambda q}-1}{\lambda},
\end{equation*}
where $\Phi_{0}(q) = q$, and let $\tau(q)$ be the compositional inverse of the series
\begin{equation*}
\frac{\alpha\Phi_{\beta}(q)-\beta\Phi_{\alpha}(q)}{\alpha-\beta}.
\end{equation*}
\begin{thm} \label{HyperbolicClosedFormulaIntro}
The generating function $\Hh(z,q)$ is given by
\begin{equation*}
\Hh(z,q) = \frac{z(\Phi_{z+\alpha+\beta}(\tau(q))-q)+\alpha\beta \Hh(0,q)}{(z+\alpha)(z+\beta)},
\end{equation*}
where 
\begin{equation*}
\Hh(0,q) = \frac{\Phi_{\alpha}(\tau(q))-\Phi_{\beta}(\tau(q))}{\alpha-\beta}.
\end{equation*}
In the Lorentz formal group law, we have
\begin{equation*}
\Hh(z,q) = \frac{e^{\frac{z}{\alpha}\textnormal{arcsinh}(\alpha q)}-\sqrt{1+\alpha^2q^2}-zq}{z^2-\alpha^2}.
\end{equation*}
\end{thm}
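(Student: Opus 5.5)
We specialize Theorem \ref{StringEquationIntro} to the hyperbolic formal group law and use \eqref{HyperbolicSimplification}. This turns the divisor sum into $-\mu_2=\alpha\beta$ times a sum of products of lower integrals, which we then encode as a differential equation for $\Hh(z,q)$.

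\emph{Step 1: the one-point string equation.} Take $d=(j,0,\dots,0)$ with $0\le j\le n-2$. Only $i=1$ contributes to $\sum_i\int\psi^d/\psi_i$, giving $\int_{\oM_{0,n}}^H\psi_1^{j-1}$. For the divisor terms, $i_T^*\psi_1^j$ is $\psi_1^j$ pulled back from whichever factor contains the marking $1$. By \eqref{HyperbolicSimplification} the bracket equals $\alpha\beta$, so each $T$ contributes
\[
\alpha\beta\int_{\oM_{0,|T|+1}}^H\psi_1^{j}\cdot\int_{\oM_{0,|T^c|+1}}^H 1 ,
\]
with the factor containing $1$ carrying $\psi_1^j$. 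The constraint $|T\cap\{1,2,3\}|\le 1$ is an asymmetric choice of one side per divisor. I would rewrite the sum as a sum over unordered splittings of $\{1,\dots,n+1\}$ in which $n+1$ and $1$ lie on prescribed sides, and count these binomially as $\binom{n-1}{k}$-type coefficients. The point $[\P^1]$ in the hyperbolic specialization equals $\int^H_{\P^1}1=-\mu_1=-(\alpha+\beta)$ up to the sign convention; I would pin this down from $c_1(\Oo(2))$ and $\psi(L\otimes M)=\psi(L)+_H\psi(M)$. The first term is then $[\P^1]\int_{\oM_{0,n}}^H\psi_1^j$.

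\emph{Step 2: translate to generating functions.} Let $a_{n,j}=\int^H_{\oM_{0,n}}\psi_1^j$. Since the weight is $q^{n-1}/(n-1)!$, the left side of the recursion (with $n+1$ points) corresponds to $\partial_q\Hh$. The string term corresponds to $z\Hh$, the $[\P^1]$ term to $-(\alpha+\beta)\Hh$, and the divisor sum becomes a product of $\Hh(z,q)$ with the $z=0$ series $\Hh(0,q)$ (or its derivative), multiplied by $\alpha\beta$. Boundary corrections, such as $\oM_{0,2}$ and $j=n-2$ terms, produce terms like $zq$, $q$ and constants. The expected result is a first-order linear ODE in $q$:
\[
\partial_q\Hh=(z-\alpha-\beta)\Hh+\alpha\beta\,\Hh(0,q)\,(\ldots)+(\text{explicit }q\text{-terms}).
\]
Setting $z=0$ gives a nonlinear (Riccati/autonomous) ODE for $h(q)=\Hh(0,q)$, together with the ODE for the cobordism classes; these are computed via string with $j=0$. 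The exact shape of this equation is the main obstacle. The factor $(z+\alpha)(z+\beta)$ in the answer suggests that one should first clear it by introducing $\tilde\Hh=(z+\alpha)(z+\beta)\Hh-\alpha\beta h$, whose equation should then become simple.

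\emph{Step 3: solve.} For the $z=0$ equation, substitute $q=g(t)$ with $g(t)=\frac{\alpha\Phi_\beta(t)-\beta\Phi_\alpha(t)}{\alpha-\beta}$. This should linearize it and show that $h(g(t))=\frac{\Phi_\alpha(t)-\Phi_\beta(t)}{\alpha-\beta}$. I would verify this by differentiation, using $\Phi_\lambda'=1+\lambda\Phi_\lambda$ and $g'=\frac{\alpha e^{\beta t}-\beta e^{\alpha t}}{\alpha-\beta}$. Next, in the variable $t=\tau(q)$, the equation for $\tilde\Hh$ should reduce to $\partial_t\tilde\Hh=z\,\partial_t\Phi_{z+\alpha+\beta}(t)-z\,g'(t)$. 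It is solved by $z(\Phi_{z+\alpha+\beta}(\tau(q))-q)$, and uniqueness follows from the initial condition $\Hh=O(q^2)$. Finally, the Lorentz case is the specialization $\beta=-\alpha$. Then $g(t)=\sinh(\alpha t)/\alpha$, so $\tau(q)=\mathrm{arcsinh}(\alpha q)/\alpha$, and $h=(\cosh(\alpha t)-1)/\alpha^2=(\sqrt{1+\alpha^2q^2}-1)/\alpha^2$. Substituting these and simplifying over $z^2-\alpha^2$ gives the displayed formula.
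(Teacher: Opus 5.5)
Your route (specialize the string equation to the hyperbolic law, turn it into an ODE for $\Hh$, then reparametrize by $q=g(t)$ with $g=\omega$) is the paper's first proof, and your Lorentz specialization is fine. However, Steps 1--2 contain a wrong sign and leave undetermined exactly the part the method depends on.

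\textbf{The sign of $[\P^1]$.} In the paper's convention $x+_\Omega y=x+y+[\P^1]xy+\cdots$, so $[\P^1]=\mu_1=\alpha+\beta$, not $-(\alpha+\beta)$. For instance, $\chi_y(\P^1)=1-y$ at $\alpha=1$, $\beta=-y$. With your sign, the ODE is the correct one with $(\alpha,\beta)\mapsto(-\alpha,-\beta)$. Your differentiation check of $h\circ g$ would then fail, and you would end up with $\Phi_{z-\alpha-\beta}$.

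\textbf{The shape of the ODE.} The $D^T$ live in $\oM_{0,n}$ with $T\subset\{1,\dots,n\}$, so the marking $n+1$ plays no role in the divisor sum. The condition $|T\cap\{1,2,3\}|\le 1$ just picks one side of each unordered splitting $\{A,B\}$. With $1\in A$ and $|A|=a$ there are $\binom{n-1}{a-1}$ such splittings. Since $\frac{q^{n-1}}{(n-1)!}\binom{n-1}{a-1}=\frac{q^{a-1}}{(a-1)!}\cdot\frac{q^{n-a}}{(n-a)!}$, the divisor sum becomes $\alpha\beta\,\Hh(0,q)\,\partial_q\Hh(z,q)$. The only boundary correction is $-q$ (from $\oM_{0,3}$) on the left, so
\[
\bigl(1-\alpha\beta\,\Hh(0,q)\bigr)\,\partial_q\Hh=(z+\alpha+\beta)\Hh+q,
\]
with no $zq$ or constant terms.

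\textbf{Step 3 assumes the conclusion.} Your claim that $\tilde\Hh$ satisfies $\partial_t\tilde\Hh=z\,\partial_t\Phi_{z+\alpha+\beta}(t)-zg'(t)$ is just the derivative of the formula to be proved, so no mechanism is given. The mechanism is that the nonlinear term multiplies $\partial_q\Hh$.
\begin{itemize}
\item Set $\nu=(\Phi_\alpha-\Phi_\beta)/(\alpha-\beta)$. Then $g'=1-\alpha\beta\nu$ and $\nu'=g+(\alpha+\beta)\nu$.
\item Once $\Hh(0,g(t))=\nu(t)$ is established (your $z=0$ check with the corrected sign, plus uniqueness), the prefactor $1-\alpha\beta\,\Hh(0,g(t))$ is exactly $g'(t)$.
\item By the chain rule, $\chi(z,t)=\Hh(z,g(t))$ therefore satisfies the constant-coefficient linear equation $\partial_t\chi=(z+\alpha+\beta)\chi+g$.
\item Then $R=(z+\alpha)(z+\beta)\chi+zg-\alpha\beta\nu$ (your $\tilde\Hh$ plus $zg$) satisfies $\partial_tR=(z+\alpha+\beta)R+z$ with $R(z,0)=0$.
\item Hence $R=z\Phi_{z+\alpha+\beta}(t)$, and substituting $t=\tau(q)$ gives the theorem.
\end{itemize}
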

As an application, we derive an explicit closed formula for certain twists of the Hirzebruch $\chi_y$-genus \cite{Hirzebruch}. For $j \ge 0$, consider the $\chi_y$-genus of $\oM_{0,n}$ with coefficients in $L_{1}^j$, namely
\begin{equation} \label{TwistedChiyGenusIntro}
\chi_y(\oM_{0,n},L_1^j) := \sum_{p} \chi\bigl(\oM_{0,n},\Omega_{\oM_{0,n}}^{p} \otimes L_1^j\bigr)y^p,
\end{equation}
where here $\Omega_{\oM_{0,n}}^p$ denotes the sheaf of algebraic $p$-forms on $\oM_{0,n}$, and $\chi$ is the coherent-sheaf Euler characteristic. Form the exponential generating function
\begin{equation*}
G(z,q) = \sum_{n \ge 3} \frac{q^{n-1}}{(n-1)!}\sum_{j \ge 0} z^{j} \chi_y(\oM_{0,n},L_1^j),
\end{equation*}
and let $\tau(q)$ be the compositional inverse of the series
\begin{equation*}
\frac{\Phi_{-y}(q)+y\Phi_{1}(q)}{1+y}.
\end{equation*}
\begin{prop} \label{TwistedChiyM0nIntro}
The generating function $G(z,q)$ is given by the closed formula
\begin{equation*}
G(z,q) = \frac{\Phi_{\frac{1+yz}{1-z}}(\tau(q))-\Phi_{-y}(\tau(q))}{1+y}.
\end{equation*}
When $y = 1$, the generating function for the twisted signatures is given by
\begin{equation*}
G(z,q) = \frac{1}{2}\left(\frac{1-z}{1+z}(e^{\frac{1+z}{1-z}\textnormal{arcsinh}\,q}-1)+e^{-\textnormal{arcsinh}\,q}-1\right).
\end{equation*}
\end{prop}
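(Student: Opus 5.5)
The plan is to specialize Theorem \ref{HyperbolicClosedFormulaIntro} along the map from the hyperbolic formal group law to the genus that computes $\chi_y$. Hirzebruch's $\chi_y$-genus comes from the orientation with Todd-type series $x(1+ye^{-x})/(1-e^{-x})$. In the hyperbolic family this corresponds to the formal group law with $\{\alpha,\beta\}=\{1,-y\}$ up to the sign and normalization fixed by our convention \eqref{OurConvention}. The target orientation should send the cobordism pushforward $\int^\Omega_X$ to the holomorphic Euler characteristic twisted by $\Lambda_y T^*X$. So the first step is to identify this precisely. I will check which choice of $(\alpha,\beta)$ makes $\int_X^H 1$ specialize to $\chi_y(X)$, testing on $X=\P^1$ and $X=\P^2$. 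The expected answer is $\alpha=-y$, $\beta=1$ (degree-$(-1)$ variables set to numbers, with an overall sign convention). This choice gives $\alpha\Phi_\beta-\beta\Phi_\alpha$ over $\alpha-\beta$ as $(\Phi_{-y}+y\Phi_1)/(1+y)$, which matches the stated $\tau$ and confirms the choice.

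Second, I need to handle the twist by $L_1^j$. Under the specialized orientation, $\psi(L)$ becomes an explicit function of $c_1(L)$, namely $\psi(L)=-c_1^\Omega(L^\vee)$. With the multiplicative part coming from $\beta=1$, the series $\sum_j z^j L_1^j$ should be expressible as a geometric-type series in $\psi_1$. The cleanest route uses the identity $\chi_y(X,L)=\int^H_X \mathrm{ch}\text{-type}(L)$: in the $K$-theoretic realization, $[L]=1+\psi(L)$ after normalizing, up to a twist by $y$. Then
\[
\sum_{j\ge0} z^j [L_1]^j=\frac{1}{1-z[L_1]}=\frac{1}{1-z}\cdot\frac{1}{1-\frac{z}{1-z}\psi_1}
\]
(adjusted by the correct $y$-dependent factor). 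Hence $G(z,q)$ equals $\frac{1}{1-z}\Hh(w,q)$ with $w=\frac{z}{1-z}$, or a close variant.

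Third, I substitute into Theorem \ref{HyperbolicClosedFormulaIntro}. With $\alpha=-y$ and $\beta=1$, the index becomes $w+\alpha+\beta=\frac{z}{1-z}+1-y$, and the denominator is $(w-y)(w+1)$. I then simplify and expect the result to collapse to $\Phi_{(1+yz)/(1-z)}(\tau)$ minus $\Phi_{-y}(\tau)$, over $1+y$. The $-zq$ term and the $\alpha\beta\Hh(0,q)$ term should combine, via the defining equation of $\tau$, into the $\Phi_{-y}$ term. The main obstacle is this bookkeeping. Getting the exact normalization of the orientation is delicate: the sign of $y$, the factor $(1+y)$, and whether $[L]$ corresponds to $1+\psi$ or $1+\beta\psi$. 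An error there shifts the index $(1+yz)/(1-z)$. I would fix all constants by checking $n=3$, where $G$ has $q^2$-coefficient $\frac12\sum_j z^j=\frac{1}{2(1-z)}$, and $n=4$, where $\chi_y(\P^1,\Oo(j))=(j+1)+y(j-1)$.

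Finally, for $y=1$ the series $(\Phi_{-1}(q)+\Phi_1(q))/2=\sinh q$, so $\tau(q)=\operatorname{arcsinh} q$. Substituting gives $\Phi_\lambda(\tau)=(e^{\lambda\operatorname{arcsinh}q}-1)/\lambda$. With $\lambda=\frac{1+z}{1-z}$ and $\lambda=-1$, this yields the stated signature formula directly.
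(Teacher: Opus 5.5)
Your specialization $\{\alpha,\beta\}=\{1,-y\}$, which gives $\omega=(\Phi_{-y}+y\Phi_1)/(1+y)$, and your $y=1$ step are both correct. The second step, however, has a genuine gap, and it is not a matter of fixing constants. Your dictionary $[L]\approx 1+\psi(L)$ is the $K$-theory ($y=0$) one. In the $\chi_y$-theory the line bundle is a M\"obius function of $\psi$, not an affine one. Set $\ell=c_1^{\text{CH}}(L_1)$. Hirzebruch--Riemann--Roch for the $\chi_y$-characteristic, together with $\text{ch}^{\chi_y}(\psi_1)=-\exp_{\widetilde{\chi_y}}(-\ell)=\frac{e^{(1+y)\ell}-1}{1+ye^{(1+y)\ell}}$, gives $\text{ch}^{\chi_y}\bigl(\frac{1+\psi_1}{1-y\psi_1}\bigr)=e^{(1+y)\ell}$. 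Hence $\chi_y(\oM_{0,n},L_1^j)=\int^{\chi_y}_{\oM_{0,n}}\bigl(\frac{1+\psi_1}{1-y\psi_1}\bigr)^j$. Summing over $j$ gives $\frac{1}{1-z}\cdot\frac{1-y\psi_1}{1-\widetilde z\psi_1}$ with $\widetilde z=\frac{y+z}{1-z}$, so
\[
G(z,q)=\frac{1}{1-z}\Bigl(\Hh(0,q)+\frac{\widetilde z-y}{\widetilde z}\bigl(\Hh(\widetilde z,q)-\Hh(0,q)\bigr)\Bigr),
\]
and not $\frac{1}{1-z}\Hh(\frac{z}{1-z},q)$. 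It is $\widetilde z$ that produces the index $\widetilde z+1-y=\frac{1+yz}{1-z}$; your $w=\frac{z}{1-z}$ gives $\frac{1-y+yz}{1-z}$ instead.

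Your own $n=4$ test detects this. Your formula predicts the coefficient $\frac{1-y+yz}{(1-z)^2}$ of $q^3/3!$, whereas $\sum_j z^j\bigl((j+1)+y(j-1)\bigr)=\frac{1-y+2yz}{(1-z)^2}$. Nor can a ``close variant'' $\frac{c}{1-z}\Hh(w',q)$ be tuned to work. The cases $n=3$ and $n=4$ force $c=1$ and $w'=\widetilde z-y$. But then the $\psi_1^2$-coefficient is $(\widetilde z-y)^2$ instead of $(\widetilde z-y)\widetilde z$, which fails at $n=5$ because $\int_{\oM_{0,5}}\psi_1^2=1$.

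The numerator $1-y\psi_1$ produces a separate $\Hh(0,q)$ term, and that term is exactly what makes your anticipated collapse happen. Inserting Theorem \ref{HyperbolicClosedFormula} with $\alpha=1$, $\beta=-y$ into the correct expression gives
\[
G=\frac{1}{1+y}\Bigl(\Phi_{\frac{1+yz}{1-z}}(\tau(q))-q+y\Hh(0,q)\Bigr).
\]
Then $q=\omega(\tau(q))$, together with $\Hh(0,q)=\frac{\Phi_1(\tau(q))-\Phi_{-y}(\tau(q))}{1+y}$, yields $-q+y\Hh(0,q)=-\Phi_{-y}(\tau(q))$. So the missing ingredient is the correct identification of $L_1^j$ in the $\chi_y$-oriented theory. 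Once you have it, the rest of your plan goes through as you describe.
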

\subsubsection{Poincar\'{e} polynomials of $\oM_{0,n}$}
We now work over $\C$. Setting $j = 0$ in \eqref{TwistedChiyGenusIntro} gives the ordinary Hirzebruch $\chi_y$-genus of $\oM_{0,n}$. By Keel \cite{Keel}, we know that the cycle map from the Chow ring to the cohomology of $\oM_{0,n}$ is an isomorphism, which implies that
\begin{equation*}
\chi_y(\oM_{0,n}) = \sum_{p} (-1)^p \dim_{\C} H^{2p}(\oM_{0,n},\C)y^p.
\end{equation*}
In particular, defining the Poincar\'{e} polynomial 
\begin{equation*}
\text{P}_{n}(z) = \sum_{p = 0}^{n-3}\dim_{\C} H^{2p}(\oM_{0,n},\C)z^p,
\end{equation*}
we have that $\text{P}_{n}(z) = \chi_{-z}(\oM_{0,n})$. The Hirzebruch $\chi_y$-genus is a ring homomorphism $\Omega^{*}(\Spec \C) \to \Z[y]$, and so it has an associated formal group law. In our convention given by \eqref{OurConvention}, it is given by
\begin{equation*}
u+_{\chi_y} v = \frac{u+v+(1-y)uv}{1+yuv},
\end{equation*}
which is hyperbolic. By Theorem \ref{StringEquationIntro} and the simplification given by \eqref{HyperbolicSimplification}, we recover the well-known quadratic recursion for the Poincar\'{e} polynomials due to Keel \cite{Keel} and Manin \cite{Manin},
\begin{equation*}
\text{P}_{n+1}(z) = (1+z)\text{P}_{n}(z)+z\sum_{\substack{i+j = n \\ i,j \ge 2}} {n-1 \choose i} \text{P}_{i+1}(z)\text{P}_{j+1}(z),
\end{equation*}
where $P_{3}(z) = 1$. Moreover, the generating function formulas in Theorem \ref{HyperbolicClosedFormulaIntro} can be thought of as a three-parameter generalization of the generating function formulas of Getzler \cite{Getzler} and Manin \cite{Manin} for the Poincar\'{e} polynomials. Note that Getzler more generally calculated the $S_n$-equivariant Poincar\'{e} polynomials, of which the formulas we consider are a special case. Recently, separate works by Aluffi--Marcolli--Nascimento \cite{AMN} and Eur--Ferroni--Matherne--Pagaria--Vecchi \cite{EFMPV} give closed formulas for the Poincar\'{e} polynomials of $\oM_{0,n}$, and Cotti \cite{Cotti} relates them to the earlier descriptions by Getzler and Manin. 

\subsection{Localization in Cobordism}

In Section \ref{LocalizationSection}, we give an alternative recursion for the one-point psi-class intersections in cobordism, and hence the classes $[\oM_{0,n}]$, using equivariant localization. In particular, we use the localization formula in algebraic cobordism due to Krishna \cite{Krishna1}, applied to a $\C^*$-action on the moduli space $\oM_{0,n}(\P^1,1)$ of genus zero stable maps to $\P^1$ of degree one. Namely, we apply the localization formula to calculate the pushforward
\begin{equation*}
\int_{\oM_{0,n}(\P^1,1)}^\Omega \text{ev}_{1}^*(h),
\end{equation*}
where $h = c_{1}^{\Omega}(\Oo_{\P^1}(1))$ is the hyperplane class and $\text{ev}_{1} \colon \oM_{0,n}(\P^1,1) \to \P^1$ is the evaluation map at the first marked point. After lifting this to a pushforward in equivariant cobordism, the recursion comes from the vanishing of all coefficients of negative powers of the equivariant parameter $t$ occuring in the expansion given by the localization formula. This can be packaged in terms of exponential generating functions as follows. Define
\begin{equation*}
E(t,q) = t-q+\sum_{n \ge 3}\frac{q^{n-1}}{(n-1)!} \int_{\oM_{0,n}}^\Omega \frac{1}{t+_{\Omega}\psi_1}.
\end{equation*}
\begin{thm} \label{LocalizationIntro}
The values of $\int_{\oM_{0,n}}^\Omega \psi_1^{j}$ for all $0 \le j \le n-3$ are uniquely determined by the condition that the coefficients of all negative powers of $t$ (coefficient-wise in $q$) in the expression
\begin{equation*}
\frac{\partial E(t,q)}{\partial q}\cdot \frac{E(\bar{t},q)}{\bar{t}}
\end{equation*}
are equal to zero, together with the fact that $\int_{\oM_{0,3}}^\Omega 1 = 1$. 
\end{thm}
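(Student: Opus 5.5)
We apply Krishna's localization formula in equivariant algebraic cobordism to $X = \oM_{0,n}(\P^1,1)$. Let $\C^*$ act on $\P^1$ with fixed points $0,\infty$, and let $h$ be an equivariant lift of $c_1^\Omega(\Oo_{\P^1}(1))$ restricting to $0$ at one fixed point and to $t$ (or $\bar t$, depending on weights) at the other. The equivariant pushforward $\int_X^\Omega \text{ev}_1^*(h)$ lies in $\Omega^*_{\C^*}(\Spec k) = \L^*[\![t]\!]$. It is therefore a power series in $t$ with no negative powers. The localization formula expresses it as a sum over fixed loci, giving a Laurent expansion; equating the two forces all negative-power coefficients to vanish.

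\textbf{Fixed loci.} A degree one stable map fixed by $\C^*$ consists of the line $\P^1$ mapped isomorphically, together with contracted genus zero curves attached at $0$ and $\infty$ carrying the markings. The fixed components are indexed by splittings $\{1,\dots,n\}=A\sqcup B$, and each is isomorphic to $\oM_{0,A\cup\{\bullet\}}\times\oM_{0,B\cup\{\bullet\}}$. A side with a single marking is a point, and an empty side is a point with no bubble. Because $\text{ev}_1^*h$ vanishes on loci where marking $1$ lies over the zero-weight point, only the splittings with $1\in B$ over the other point contribute. The normal bundle has the following pieces:
\begin{itemize}
\item the deformation of the line, which contributes the tangent weights $\pm t$, written in formal group law form as $t$ and $\bar t$;
\item the node smoothing at each bubble, contributing $t+_\Omega\psi_\bullet$ or $\bar t+_\Omega\psi_\bullet$, with the conventions adjusted to the sign definition $\psi(L)=-c_1(L^\vee)$;
\item the removal of automorphisms when a side has only one marking.
\end{itemize}
I would compute the equivariant Euler classes carefully in the convention \eqref{OurConvention}. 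This is the main technical obstacle, since in cobordism one cannot simply negate weights and must consistently use $\bar t$ and $+_\Omega$.

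\textbf{Generating functions.} Summing over $|A|=a$ and $|B|=b$ with binomial coefficients, each vertex contributes a factor $\int_{\oM_{0,m+1}}^\Omega \frac{1}{t+_\Omega\psi_\bullet}$. The degenerate cases $m=0$ and $m=1$ contribute the terms $t$ and $-q$ in $E$, after normalizing by the edge weight. The side containing marking $1$ has one distinguished marking, which produces the derivative $\partial_q E(t,q)$. The other side is unconstrained and produces $E(\bar t,q)/\bar t$. The edge factor and the value $\text{ev}_1^*h=t$ should combine into exactly this normalization. Matching the coefficient of $q^{n-1}/(n-1)!$ then shows that the localization sum equals the $q^{n-1}$ coefficient of $\frac{\partial E(t,q)}{\partial q}\cdot\frac{E(\bar t,q)}{\bar t}$. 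This must be a polynomial (power series) in $t$, which proves the vanishing of the negative-power coefficients.

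\textbf{Uniqueness.} We argue by induction on $n$. Expand $\int_{\oM_{0,n}}^\Omega \frac{1}{t+_\Omega\psi_1}=\sum_j c_j(t)\int\psi_1^j$, where $\frac{1}{t+_\Omega x}=t^{-1}\sum_j(\cdots)x^jt^{-j}$ has leading term $(-1)^jx^jt^{-j-1}$ modulo higher-order corrections in $\L^*$. In the $q^{n-2}$ coefficient of the product, the only terms involving the unknowns $\int_{\oM_{0,n}}\psi_1^j$ are those pairing the $n$-point term of $\partial_qE(t,q)$ with the leading term $\bar t/\bar t=1$ of $E(\bar t,q)/\bar t$. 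All other terms involve only smaller moduli spaces. The coefficient of $t^{-j-1}$ is then $(-1)^j\int_{\oM_{0,n}}\psi_1^j$ plus terms with higher $\psi$-powers or known lower data. Solving triangularly, descending from $j=n-3$ to $j=0$, determines all the values from the $n'<n$ values, starting from $\int_{\oM_{0,3}}^\Omega 1=1$. I expect the precise normal-bundle bookkeeping in the second step to be hardest. I would first check the formula in the Chow specialization, where it must reduce to the classical localization identity $\int_{\oM_{0,n}}\psi_1^{n-3}=1$.
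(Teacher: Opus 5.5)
Your proposal is correct and follows essentially the same route as the paper: localize $\text{ev}_1^*(h_0)$ on $\oM_{0,n}(\P^1,1)$, assemble the fixed-locus sum into the $q^{n-1}/(n-1)!$ coefficient of the product, get the vanishing of negative powers from the fact that the pushforward lies in $\L^*[\![t]\!]$, and conclude by the same downward triangular induction on $j$. The paper does the bookkeeping in the $\widetilde\Omega$ convention (stable vertex $1/(t+_{\widetilde\Omega}(-\psi))$, single-marking vertex $1$, empty vertex $t$ or $\iota(t)$, edge $1/(t\,\iota(t))$) and then converts using $E(t,q)=-\widetilde E(-t,q)$ and $\overline{t}=-\iota(-t)$; one small correction is that the automorphism factor belongs to the empty side, not the single-marking side, although your generating-function paragraph already assigns the $m=0$ and $m=1$ contributions correctly.
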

We also use this recursion to give an alternative proof of Theorem \ref{HyperbolicClosedFormulaIntro}. In particular, this gives another method of recovering the known formulas for the Poincar\'{e} polynomials of $\oM_{0,n}$, as well as another proof of our formulas for the twisted $\chi_y$-genera in Proposition \ref{TwistedChiyM0nIntro}. 

\subsection{Acknowledgements}

We thank our advisor Ezra Getzler for helpful feedback and suggestions regarding this work.

\subsection{AI usage}

OpenAI's GPT-5.6 Sol and GPT-6 assisted in developing the closed formulas in Theorem \ref{HyperbolicClosedFormulaIntro}, Proposition \ref{TwistedChiyM0nIntro} and Proposition \ref{IndecomposableIntro}, and these calculations were independently verified and simplified by the author. GPT-6 was used for proofreading, and to explore proof strategies for the cobordism-valued dilaton equation in Proposition \ref{DilatonEquationIntro}, whose statement was provided by the author. The paper was entirely written by the author, who understands and is responsible for all statements inside of it.

\section{The string and dilaton equations, and the hyperbolic formal group law}

\subsection{The string equation in cobordism}

Let $k$ be an algebraically closed field of characteristic zero, which all our varieties will be defined over unless stated otherwise. We use the properties of algebraic cobordism $\Omega^*$ due to Levine--Morel \cite{AlgCob} and Levine--Pandharipande \cite{LevinePandharipande} as summarized in \cite[Section 2]{EllisBloor}, and use the usual pushforward maps and Chern classes. We identify $\Omega^*(\Spec k)$ with the Lazard ring $\L^*$ classifying (commutative and one-dimensional) formal group laws. For a smooth variety $X$ over $k$ and a line bundle $L \to X$, define
\begin{equation*}
\psi(L) := -c_{1}^{\Omega}(L^\vee) \in \Omega^1(X).
\end{equation*}
We then use the convention that $x+_{\Omega}y \in \L^*[\![x,y]\!]$ denotes the universal formal group law which satisfies
\begin{equation} \label{NewFGL}
\psi(L \otimes M) = \psi(L) +_{\Omega} \psi(M)
\end{equation}
for line bundles $L,M \to X$. Let $\overline{x} \in \L^*[\![x]\!]$ be the inverse of the universal formal group law, i.e. the unique power series satisfying $x+_{\Omega}\overline{x} = 0$, so that $\overline{\psi(L)} = -c_{1}^{\Omega}(L)$. 

It will also be convenient to define
\begin{equation} \label{UsualFGL}
x+_{\widetilde\Omega} y := -((-x)+_{\Omega} (-y)) \in \L^*[\![x,y]\!],
\end{equation}
which satisfies the usual convention that
\begin{equation*}
c_{1}^{\Omega}(L \otimes M) = c_{1}^{\Omega}(L)+_{\widetilde\Omega} c_{1}^\Omega(M).
\end{equation*}
We denote the inverse of this formal group law by $\iota(x) = -(\overline{-x})$, so $c_{1}^\Omega(L) = \iota(c_{1}^\Omega(L^\vee))$.

We begin by recalling the cobordism-valued string equation given in \cite[Theorem 4.1]{EllisBloor}, however we use a modified definition of the psi-classes. For each $1 \le i \le n$, let $L_{i}$ denote the tautological line bundle on $\oM_{0,n}$ corresponding to the $i^{\text{th}}$-marked point, and let 
\begin{equation} \label{NewCoordinates}
\psi_{i} = \psi(L_i) \in \Omega^1(\oM_{0,n}).
\end{equation}
For integers $d_{1},\dots,d_{n} \ge 0$ with $|d| := d_{1}+\dots+d_{n}$, we define
\begin{equation*}
  \psi^d = \psi_1^{d_1} \dotsm \psi_n^{d_n} ,
\end{equation*}
and for each $1 \le i \le n$, we let
\begin{equation*}
  \psi^d/\psi_i =
  \begin{cases}
    \psi_1^{d_1} \dotsm \psi_i^{d_i-1} \dotsm \psi_n^{d_n} , & d_i>0 , \\
    0 , & d_i=0 .
  \end{cases}
\end{equation*}

Furthermore, for each subset $T \subset \{1,2,\dots,n\}$ with $\vert{T}\vert \ge 2$, $\vert{T^c}\vert \ge 2$ and $\vert{T \cap \{1,2,3\}}\vert \le 1$, we denote by $D^{T}$ the divisor of $\oM_{0,n}$ isomorphic to $\oM_{0,\vert{T}\vert+1} \times \oM_{0,\vert{T^c}\vert+1}$, with inclusion $i_{T} \colon D^T \to \oM_{0,n}$. We let $\psi_a$ and $\psi_b$ be the pullbacks of the psi-classes at the node branch markings under the two projections from $\oM_{0,\vert{T}\vert+1} \times \oM_{0,\vert{T^c}\vert+1}$ to the first and second factors respectively. 

For $X$ a smooth projective variety over $k$ and $p \colon X \to \Spec k$ the projection, we use the notation $\int_X^\Omega \alpha := p_{*}(\alpha)$ for $\alpha \in \Omega^*(X)$, where $p_{*}$ is the pushforward map in $\Omega^*$ induced by $p$, which increases degree by $-\dim_k X$. More generally, the pushforward maps in $\Omega^*$ increase degree by the relative codimension. We now describe how the string equation of \cite[Theorem 4.1]{EllisBloor} transforms under the new definition of the psi-classes given by \eqref{NewCoordinates}. 
\begin{prop}[String equation] \label{NewStringEquation}
For $n \ge 3$ and $0 \le |d| \le n-2$, we have
\begin{multline*} 
    \int_{\oM_{0,n+1}}^\Omega 1\cdot\psi^d  \\
    \begin{aligned}
      &= [\P^{1}] \int_{\oM_{0,n}}^\Omega \psi^d
      +\sum_{i=1}^{n}\int_{\oM_{0,n}}^\Omega \psi^d/\psi_i+ \sum_{T}\int_{D^{T}}^\Omega \biggl(
      \frac{1}{\overline{\psi}_a\bigl(\psi_{a}+_{\Omega}\psi_{b}\bigr)}
     - \frac{1}{\bigl(\psi_{a}+_{\Omega}\psi_{b} \bigr) \psi_{b}} + \frac{1}{\psi_a\psi_b} 
      \biggr) i_{T}^{*} \psi^d. \\
    \end{aligned}
\end{multline*}
Together with the initial condition $\int_{\oM_{0,3}}^\Omega 1 = 1$, this uniquely determines all psi-class intersections. 
\end{prop}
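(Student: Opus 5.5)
The plan is to derive the statement from \cite[Theorem 4.1]{EllisBloor} by a change of variables, rather than reprove the string equation from scratch. In \cite{EllisBloor} the psi-classes are taken to be the usual first Chern classes $c_1^{\Omega}(L_i)$, and the formal group law is written in the usual convention $+_{\widetilde\Omega}$ of \eqref{UsualFGL}. The new classes satisfy
\begin{equation*}
\psi_i = -c_1^\Omega(L_i^\vee).
\end{equation*}
Since $c_1^\Omega(L_i) = \iota(c_1^\Omega(L_i^\vee))$, we get $c_1^\Omega(L_i) = \iota(-\psi_i) = -\overline{\psi_i}$. Thus the old class is a power series in the new class, namely $-\overline{\psi_i} = \psi_i + (\text{higher order})$, and conversely $\psi_i = -\overline{c_1^\Omega(L_i)}$.

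\textbf{Step 1 (transport the boundary term).} I would rewrite the boundary kernel of \cite[Theorem 4.1]{EllisBloor} in the new variables. Set $x = c_1^\Omega(L_a) = -\overline{\psi}_a$ and $y = c_1^\Omega(L_b) = -\overline{\psi}_b$. Then $x +_{\widetilde\Omega} y = c_1^\Omega(L_a\otimes L_b) = -\overline{(\psi_a +_\Omega \psi_b)}$, by \eqref{NewFGL} applied to $L_a \otimes L_b$. Substituting these expressions turns each rational kernel in the old equation into a rational function of $\psi_a$, $\psi_b$, $\overline{\psi}_a$, $\overline{\psi}_b$ and $\psi_a+_\Omega\psi_b$. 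Simplifying with the identity $\overline{u+_\Omega v} = \overline{u}+_\Omega\overline{v}$ should give the three displayed terms. The $1/(\psi_a\psi_b)$ term would arise as the correction making the expression a genuine power series, that is, cancelling the polar parts.

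\textbf{Step 2 (the non-boundary terms).} The integrand in \cite{EllisBloor} is a monomial in the old classes, whereas the new statement uses $\psi^d$. I would handle this by linearity. Each $\psi_i^{d_i}$ is a power series in $c_1^\Omega(L_i)$, and the old string equation holds for every monomial, so it holds for every polynomial expression in the old classes, in particular for $\psi^d$. The remaining task is to identify the comparison terms, the pullback of psi-classes along the forgetful map $\pi$ and the resulting $\psi^d/\psi_i$ and $[\P^1]$ terms. These come from the pullback relation $L_i = \pi^* L_i \otimes \Oo(D_{i,n+1})$. In the new variables this reads
\begin{equation*}
\psi_i = \pi^*\psi_i +_\Omega \psi(\Oo(D_{i,n+1})).
\end{equation*}
Because $D_{i,n+1}$ restricts to $\psi(\Oo(D))|_D = -\overline{\psi}$ of the cotangent line there, the pushforward computation in \cite{EllisBloor} should reorganize into exactly the term $\int\psi^d/\psi_i$. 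I expect the higher formal group law corrections to be absorbed precisely because of the choice of $\psi$.

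\textbf{Step 3 (uniqueness).} The recursion is used as in \cite{EllisBloor}. Every integrand on the right-hand side lives on a space of smaller dimension, or with fewer points. Using the dilaton-free reduction, i.e. noting that when $|d| \le n-2$ some $d_i = 0$ so we may apply the equation with that point forgotten, together with induction on $n$ and the base case $\int_{\oM_{0,3}}^\Omega 1 = 1$, this determines all integrals. Here the boundary terms are expanded as power series whose products factor over $\oM_{0,|T|+1}\times\oM_{0,|T^c|+1}$.

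\textbf{Main obstacle.} The hard part is the algebraic identity in Step 1, that is, verifying that the transformed kernel equals the displayed one. I would first try to check it by writing both sides as $F(\psi_a,\psi_b)$ and comparing them. I would use the relation between the $\widetilde\Omega$ and $\Omega$ conventions, and test the result against the multiplicative law, where it must reduce to the $K$-theory formula of \cite[Section 5.2]{EllisBloor}.
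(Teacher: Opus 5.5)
The gap is in Step 2. That is where the actual content of the proposition lies, and you leave it as ``I expect the higher formal group law corrections to be absorbed''.

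Using $D_iD_j=0$ for $i\neq j$ and $\psi_iD_i=0$, the relation $\psi_i=\pi^*\psi_i+_\Omega D_i$ gives
\begin{equation*}
\int_{\oM_{0,n+1}}^\Omega \psi^d=\int_{\oM_{0,n}}^\Omega \pi_*(1)\,\psi^d+\sum_{i=1}^n\int_{\oM_{0,n}}^\Omega\Bigl(\pi_*(D_i)+\sum_{j,\ell\ge1}a_{j\ell}\,\psi_i^j\,\pi_*(D_i^\ell)\Bigr)\psi^d/\psi_i ,
\end{equation*}
where $x+_\Omega y=x+y+\sum a_{j\ell}x^jy^\ell$. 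The whole point is that the bracket equals $1$.

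For that you need $\pi_*(D_i^\ell)=-(\overline\psi_i)^\ell/\psi_i$, which you get as follows:
\begin{itemize}
\item write $D_i=\overline{-\widetilde D_i}$, where $\widetilde D_i=c_1^\Omega(\Oo(D^{\{i,n+1\}}))=f_*(1)$ and $f$ is the $i$-th section;
\item factor $(\overline{-x})^\ell=x\,q_\ell(x)$;
\item use the projection formula together with $f^*\widetilde D_i=c_1^\Omega(L_i^\vee)=-\psi_i$.
\end{itemize}
The bracket is then $-\psi_i^{-1}\bigl((\psi_i+_\Omega\overline\psi_i)-\psi_i\bigr)=1$.

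What you supply for this step would not get you there. First, $D_i$ restricts to $\overline\psi_i$ on the divisor, not $-\overline\psi_i$. More importantly, $\pi_*(D_i^\ell)$ is not determined by that restriction alone, because $D_i$ is not the fundamental class of the divisor; the factor $q_\ell$ is essential. Already in $K$-theory, with $x+_\Omega y=x+y+\beta xy$, the bracket is $(1+\beta\psi_i)\pi_*(D_i)$. It equals $1$ only because $\pi_*(D_i)=1/(1+\beta\psi_i)$, not $1$.

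Your linearity route from \cite[Theorem 4.1]{EllisBloor} can be made to work, but only if you use the exact old middle term. The same computation in the old variables $x=c_1^\Omega(L_i)$ produces the factor $-x/\iota(x)$ in front of $\psi^d/\psi_i$. With $g(x)=-\iota(x)$ one has $\frac{g(x)^{d_i}}{x}\cdot\frac{x}{g(x)}=g(x)^{d_i-1}$, which turns this into the new $\psi^d/\psi_i$. This relies on the correct conversion $\psi_i=\overline{-c_1^\Omega(L_i)}=-\iota(c_1^\Omega(L_i))$, not $-\overline{c_1^\Omega(L_i)}$ as you wrote; these differ already in $K$-theory.

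You have also put the main obstacle in the wrong place. The boundary kernel, like the $[\P^1]$ term, comes from $\pi_*(1)$ in \cite[Proposition 3.6]{EllisBloor}. That class does not depend on how the psi-classes are normalized. Rewriting it in the new variables is therefore a direct substitution ($c_1^\Omega(L_a)=-\overline\psi_a$, $c_1^\Omega(L_a^\vee)=-\psi_a$, and similarly for $L_b$ and $L_a\otimes L_b$), and no correction term is needed.

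Your uniqueness argument in Step 3 is fine.
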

\begin{proof}
Let $\pi \colon \oM_{0,n+1} \to \oM_{0,n}$ be the universal curve forgetting the $(n+1)^{\text{st}}$-marked point. Recall that for $1 \le i \le n$ we have the relation
\begin{equation*}
L_{i} = \pi^{*}L_{i} \otimes \Oo(D^{\{i,n+1\}}).
\end{equation*}
Defining $D_{i} = \psi(\Oo(D^{\{i,n+1\}}))$, it follows from \eqref{NewFGL} that
\begin{equation} \label{ForgetfulRelation}
\psi_i = \pi^{*}\psi_{i}+_{\Omega} D_{i}.
\end{equation}
As in the proof of \cite[Theorem 4.1]{EllisBloor}, we know that $D_{i}D_{j} = 0$ and $\psi_{i}D_{i} = 0$ for $i \neq j$. Hence, following the same algebraic manipulations in the proof of \cite[Theorem 4.1]{EllisBloor} and writing $x+_{\Omega} y = x+y+\sum_{j,\ell \ge 1} a_{j\ell}x^{j}y^{\ell}$, we get
\begin{equation*}
\int_{\oM_{0,n+1}}^\Omega 1 \cdot \psi^{d} = \int_{\oM_{0,n}}^\Omega \pi_{*}(1)\psi^d+\sum_{i=1}^{n} \int_{\oM_{0,n}}^\Omega
  \biggl(\pi_{*}(D_{i})+\sum_{j,\ell \ge 1}
  a_{j\ell}\psi_{i}^{j}\pi_{*}(D_{i}^{\ell})\biggr)\psi^d/\psi_i.
\end{equation*}
Let $f \colon D^{\{i,n+1\}} \to \oM_{0,n+1}$ be the inclusion, so that $c_{1}^\Omega(\Oo(D^{\{i,n+1\}})) = f_{*}(1)$, which we denote by $\widetilde{D}_{i}$. By definition, we have $D_{i} = \overline{-\widetilde{D}_{i}}$. Noting that $\overline{-x}$ is divisible by $x$, write $(\overline{-x})^{\ell} = xq_{\ell}(x)$. Then, by the projection formula, we get
\begin{equation*}
D_{i}^{\ell} = f_{*}(1)q_{\ell}(\widetilde{D}_{i}) = f_{*}(q_{\ell}(f^{*}\widetilde{D}_{i})).
\end{equation*}
However, we can identify $f$ with the section of $\pi$ corresponding to the $i^{\text{th}}$ marked point, so that $f^{*}\widetilde{D}_{i} = c_{1}^\Omega(L_{i}^{\vee})$. It follows that
\begin{equation*}
\pi_{*}(D_{i}^{\ell}) = \pi_{*}f_{*}(q_{\ell}(-\psi_{i})) = -\frac{(\overline{\psi}_i)^{\ell}}{\psi_{i}},
\end{equation*}
and hence
\begin{align*}
\int_{\oM_{0,n+1}}^\Omega 1 \cdot \psi^{d} &= \int_{\oM_{0,n}}^\Omega \pi_{*}(1)\psi^d-\sum_{i=1}^{n} \int_{\oM_{0,n}}^\Omega \frac{1}{\psi_{i}}\left(\overline{\psi}_{i}+\sum_{j,\ell \ge 1} a_{j\ell} \psi_{i}^{j}(\overline{\psi}_{i})^{\ell}\right)\psi^d/\psi_i \\
&= \int_{\oM_{0,n}}^\Omega \pi_{*}(1)\psi^d+\sum_{i=1}^{n} \int_{\oM_{0,n}}^\Omega \psi^d/\psi_i, \\
\end{align*}
with the second equality following from $\psi_{i}+_{\Omega} \overline{\psi}_i = 0$. We then use the calculation of $\pi_*(1)$ given by \cite[Proposition 3.6]{EllisBloor}. The fact that all psi-class intersections are uniquely determined by this equation follows since $\int_{\oM_{0,n}}^\Omega \psi^d = 0$ for $|d| > n-3$. 
\end{proof}
\begin{exmp} \label{ChowK-theoryClosed}
In the Chow ring we have that $-c_{1}^{\text{CH}}(L_{i}^\vee) = c_{1}^{\text{CH}}(L_{i})$, so \eqref{NewCoordinates} agrees with the usual definition of the psi-classes in this case, and for $|d| = n-3$,
\begin{equation*}
\int_{\oM_{0,n}}^{\text{CH}} \psi^d = {n-3 \choose d_1,\dots,d_n}.
\end{equation*}
However, consider $K$-theory, namely the oriented cohomology theory $K^{0}(X)[\beta,\beta^{-1}] := K^{0}(X) \otimes_{\Z} \Z[\beta,\beta^{-1}]$ where $K^{0}(X)$ is the Grothendieck group of locally free coherent sheaves on $X$ and $\beta$ is a formal variable of degree $-1$. Then 
\begin{equation*}
\psi_i = ([\Ll_{i}]-1)\beta^{-1},
\end{equation*}
where $\Ll_i$ is the sheaf of sections of $L_{i}$, which is precisely the twist of the $K$-theoretic psi-classes considered in \cite[Section 5.2]{EllisBloor} leading to the closed formula
\begin{equation} \label{KClosedFormula}
\int_{\oM_{0,n}}^K \psi^d = \beta^{n-3-|d|} \binom{n-3}{n-3-|d|,d_{1},\dots,d_{n}}.
\end{equation}
\end{exmp}

Proposition \ref{NewStringEquation} in particular gives an inductive formula for the classes of $\oM_{0,n}$ as elements in the Lazard ring $\L^{*}$, since
\begin{equation*}
  [\oM_{0,n}] = \int_{\oM_{0,n}}^\Omega 1.
\end{equation*}
Identify $\L^{*} \cong \Z[u_{1},u_{2},u_{3},\dots]$ with $\vert{u_i}\vert = -i$, with the choice of integral generators corresponding to the presentation of the universal formal group law given in \cite{Hazewinkel} and \cite[Appendix A]{EllisBloor}. In terms of these generators, we have in low dimensions \cite[Example 4.3]{EllisBloor} that
\begin{itemize}
\item[] $[\oM_{0,4}] = u_{1}$
\item[] $[\oM_{0,5}] = 4u_{1}^{2}-3u_{2}$
\item[] $[\oM_{0,6}] = 31u_{1}^{3}-30u_{1}u_{2}+17u_{3}$
\item[] $[\oM_{0,7}] = 273u_{1}^{4}-317u_{1}^{2}u_{2}+70u_{2}^{2}+214u_{1}u_{3}-25u_{4}$
\item[] $[\oM_{0,8}] = 2898u_{1}^{5}-4063u_{1}^{3}u_{2}+2012u_{1}u_{2}^{2}+2765u_{1}^{2}u_{3}-1204u_{2}u_{3}-385u_{1}u_{4}+461u_{5}$.
\end{itemize}
We now aim to give a closed formula for the sequence $1,-3,17,-25,461,\dots$ of the coefficients of the $u_{n-3}$ in $[\oM_{0,n}]$ for $n > 3$. 
\begin{defn}
Let $I \subset \L^{*}$ be the ideal generated by the negatively graded elements. Then, the \textit{indecomposable quotient} $\mathcal{Q}\L^{*}$ is defined by 
\begin{equation*}
\mathcal{Q}\L^* = I/I^2.
\end{equation*}
\end{defn}
Note in particular that $\mathcal{Q}\L^{-n} \cong \Z$ for each $n > 0$, generated by $u_n$. Following \cite[Section 4.4.4]{AlgCob}, for a smooth proper variety $X$ of dimension $n$ with tangent bundle $T_X$ having ordinary Chern roots $\lambda_1,\dots,\lambda_n$, define the Milnor number
\begin{equation*}
S_{n}(X) := \int_{X}^{\text{CH}} \lambda_1^n+\dots+\lambda_n^n = n!\int_{X}^{\text{CH}} \text{ch}_{n}(T_X) \in \Z,
\end{equation*}
where $\text{ch}_n(T_X)$ is the degree $n$ component of the Chern character of $T_X$. Observe that for $[X],[Y] \in \L^*$ with $\dim X, \dim Y > 0$ and $\dim X+\dim Y = n$, we have that $S_n(X \times Y) = 0$, since $T_{X \times Y} \cong \text{pr}_1^*T_X \oplus \text{pr}_2^*T_Y$. Therefore, it follows that $S_n$ descends uniquely to a map $Q\L^{-n} \to \Z$, which we also denote by $S_n$. Next, following \cite[Section 1.6]{Hazewinkel}, define
\begin{equation*}
  \nu(n) =
  \begin{cases}
    p , & \text{if $n = p^r$ for some prime $p$ and $r \in \N$} , \\
    1 , & \text{otherwise}.
  \end{cases}
\end{equation*}
Furthermore, let $\ell_{\widetilde\Omega}(x) \in (\L^* \otimes_\Z \Q)[\![x]\!]$ denote the logarithm of the universal formal group law $x+_{\widetilde\Omega} y$, i.e. the unique power series with leading term $x$ such that
\begin{equation} \label{Logarithm}
\ell_{\widetilde\Omega}(x+_{\widetilde\Omega}y) = \ell_{\widetilde\Omega}(x)+\ell_{\widetilde\Omega}(y).
\end{equation}
In the convention for the universal formal group law corresponding to our choice of integral generators, we have \cite{Hazewinkel} 
\begin{equation*}
\ell_{\widetilde\Omega}(x) \equiv x+\sum_{n \ge 1} \frac{u_{n}}{\nu(n+1)}x^{n+1} \mod I^2. 
\end{equation*}
However, Mishchenko's formula (see \cite{Quillen}) says that
\begin{align*}
\ell_{\widetilde\Omega}(x) = \sum_{n \ge 0} [\P^n]\frac{x^{n+1}}{n+1},
\end{align*}
which implies that
\begin{equation} \label{Snfirst}
\frac{S_{n}(u_n)}{\nu(n+1)} = \frac{S_n(\P^n)}{n+1}. \\
\end{equation}
The Euler sequence
\begin{equation*}
0 \to \Oo_{\P^n} \to \Oo_{\P^n}(1)^{\oplus(n+1)} \to T_{\P^n} \to 0
\end{equation*}
gives that $S_{n}(\P^n) = n+1$, so \eqref{Snfirst} implies $S_{n}(u_n) = \nu(n+1)$. Therefore, given an $[X] \in \L^{-n}$, we have that
\begin{equation} \label{Sn}
[X] \equiv \frac{S_{n}(X)}{\nu(n+1)} \cdot u_n \mod I^2.
\end{equation}
\begin{prop}
For each $n \ge 2$, we have
\begin{equation*}
[\oM_{0,n+2}] \equiv \frac{(-1)^n}{\nu(n)}\left(\frac{1}{2}{2n \choose n}-1\right)u_{n-1} \mod I^2.
\end{equation*}
\end{prop}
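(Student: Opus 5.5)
By \eqref{Sn}, applied with $\dim \oM_{0,n+2} = n-1$, it suffices to show that the Milnor number is
\begin{equation*}
S_{n-1}(\oM_{0,n+2}) = (-1)^n\left(\frac{1}{2}{2n \choose n}-1\right).
\end{equation*}
Note that $\nu((n-1)+1)=\nu(n)$, which is exactly the denominator in the statement. So the plan is to compute $S_{n-1}(\oM_{0,m})$ for $m=n+2$, working entirely in the Chow ring, where $S_{n-1}(X) = (n-1)!\int_X^{\text{CH}} \text{ch}_{n-1}(T_X)$.

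\textbf{Step 1: reduce to a boundary sum.} I will use the standard description of the tangent bundle of $\oM_{0,m}$ via the log tangent bundle. The space $\oM_{0,m}$ is the complement of a simple normal crossings boundary $D = \bigcup_T D^T$, and on it one has
\begin{equation*}
0 \to T_{\oM_{0,m}}(-\log D) \to T_{\oM_{0,m}} \to \bigoplus_T \Oo_{D^T}(D^T) \to 0.
\end{equation*}
The log cotangent bundle $\Omega^1_{\oM_{0,m}}(\log D)$ is $\pi_*(\omega_\pi(\Sigma)\otimes\omega_\pi)$ on the universal curve. Alternatively, since $\oM_{0,m}$ is an iterated blowup of $\P^{m-3}$ (Kapranov), and Milnor numbers behave additively modulo decomposables, I could track how $S$ changes under blowups along products of smaller spaces.

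The cleanest route I would try first is the following. Milnor numbers are additive in exact sequences and vanish on products, so
\begin{equation*}
S_{n-1}\bigl(\Oo_{D^T}(D^T)\bigr) = \int_{D^T}^{\text{CH}} (N_{D^T})^{n-2}, \qquad N_{D^T} = -\psi_a-\psi_b .
\end{equation*}
Then $S_{n-1}(\oM_{0,m}) = S_{n-1}(T(-\log D)) + \sum_T \int_{D^T}(-\psi_a-\psi_b)^{n-2}$.

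\textbf{Step 2: evaluate the boundary integrals.} On $D^T = \oM_{0,|T|+1}\times\oM_{0,|T^c|+1}$ the integrand $(\psi_a+\psi_b)^{n-2}$ contributes only through the term $\psi_a^{|T|-2}\psi_b^{|T^c|-2}$. Each factor integrates to $1$ by Example \ref{ChowK-theoryClosed}, so the integral is the binomial coefficient $\binom{n-2}{|T|-2}$. Summing over unordered splittings of $m$ points into two parts of size $\ge 2$ gives
\begin{equation*}
(-1)^{n}\,\frac{1}{2}\sum_{k}{m\choose k}{n-2 \choose k-2},
\end{equation*}
which is a Vandermonde sum equal to $\frac12\binom{2n}{n}$ up to lower-order corrections. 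The endpoint terms with $k<2$ or $k>m-2$ are excluded, and handling them should produce a $-1$-type correction.

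\textbf{Step 3: the log tangent term.} For the log tangent bundle I would use the Chern class formula $c(\Omega^1(\log D))$ on $\oM_{0,m}$, expressed through $\kappa$ and $\psi$ classes via Grothendieck--Riemann--Roch (Bini's formulas). Alternatively I would check that its $\text{ch}_{n-1}$ integrates to $(-1)^{n-1}$, using the fact that the open part $\M_{0,m}$ has Euler characteristic $(-1)^{m-3}(m-3)!$ and that the log bundle is pulled back from toric-like data.

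\textbf{Main obstacle.} I expect Step 3, together with the exact bookkeeping of the sum in Step 2, to be the hard part. As a fallback I would extract the coefficient directly from the recursion in Proposition \ref{NewStringEquation} modulo $I^2$. Modulo $I^2$ only linear terms in the $a_{j\ell}$ survive, and all products of two positive-dimensional classes vanish. This yields a linear recursion for the coefficient $c_n$ of $u_{n-1}$, which I would verify against the data $1,-3,17,-25,461$. I would then solve it by a generating function identity involving $\binom{2n}{n}$.
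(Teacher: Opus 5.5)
Your reduction to the Milnor number $S_{n-1}(\oM_{0,n+2})$ via \eqref{Sn} is right. Your log-tangent splitting is also in substance the paper's argument. The paper quotes Eberhardt's formula $\text{ch}(T^\vee_{\oM_{0,n+2}}) = n-1+\sum_{j\ge1}(\kappa_j-\Delta_j)/j!$. Up to the sign $(-1)^{j-1}$, $\Delta_j/j!$ is the degree-$j$ part of your boundary term $\sum_T\text{ch}(\Oo_{D^T}(D^T))$, and $\kappa_j/j!$ is $\text{ch}_j(\Omega^1(\log D))$.

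However, there is a genuine gap, and the bookkeeping is off. In Step 2 there is no endpoint correction at all. The factor $\binom{n-2}{k-2}=\binom{n-2}{n-k}$ already vanishes for $k<2$ and for $k>n$, so Vandermonde gives $\frac12\sum_{k=2}^{n}\binom{n+2}{k}\binom{n-2}{n-k}=\frac12\binom{2n}{n}$ exactly. The boundary therefore contributes precisely $(-1)^n\frac12\binom{2n}{n}$ to $S_{n-1}$.

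Hence the entire $-1$ must come from Step 3. You need $(n-1)!\int^{\text{CH}}_{\oM_{0,n+2}}\text{ch}_{n-1}\bigl(T(-\log D)\bigr)=(-1)^{n-1}$, and this is exactly the step you leave as something to ``check''. Your Euler-characteristic route cannot supply it. $\chi(\M_{0,m})$ computes the top Chern class $c_{n-1}(\Omega^1(\log D))$, whereas $\text{ch}_{n-1}$ depends on all the Chern classes through Newton's identities. Already for $\oM_{0,5}$ you need $\int c_1^2=5$ in addition to $\int c_2=2$ to get $2\int\text{ch}_2=1$. The fallback via the string equation modulo $I^2$ is only sketched, and agreement with $1,-3,17,-25,461$ is not a proof.

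The GRR route you mention does close the gap, and it is short. Take $\pi\colon\oM_{0,m+1}\to\oM_{0,m}$ with $\Omega^1(\log D)\cong\pi_*(\omega_\pi^{2}(\Sigma))$ and $R^1\pi_*=0$. The nodal correction in Mumford's GRR is the same for $\omega_\pi^{2}(\Sigma)$ as for $\omega_\pi$, since both restrict trivially to the nodes. In genus zero, $\text{ch}(R\pi_*\omega_\pi)=-1$. Write $K=c_1(\omega_\pi)$ and $\widetilde K=c_1(\omega_\pi(\Sigma))$. The relation $\widetilde K\cdot\Sigma=0$ gives $\widetilde K K^a=\widetilde K^{a+1}$, so
\begin{equation*}
\text{ch}\bigl(\Omega^1(\log D)\bigr)=-1+\pi_*\Bigl(\frac{K}{1-e^{-K}}\bigl(e^{\widetilde K}-1\bigr)\Bigr)=-1+\pi_*\bigl(\widetilde K e^{\widetilde K}\bigr)=(m-3)+\sum_{j\ge1}\frac{\kappa_j}{j!}.
\end{equation*}
Then $\int_{\oM_{0,n+2}}^{\text{CH}}\kappa_{n-1}=\int_{\oM_{0,n+3}}^{\text{CH}}\psi_{n+3}^{n}=1$ by Example \ref{ChowK-theoryClosed}, which gives the required $(-1)^{n-1}$ after dualizing. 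With this filled in, or by simply quoting Eberhardt's formula as the paper does, your argument becomes the paper's proof.
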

\begin{proof}
Following \cite[Appendix B]{Eberhardt} and restricting to the case of genus zero, we have that
\begin{equation*}
\text{ch}(T_{\oM_{0,n+2}}^{\vee}) = n-1+\sum_{j \ge 1} \frac{\kappa_j-\Delta_j}{j!},
\end{equation*}
where $\kappa_j = \pi_{*}(\psi_{n+3}^{j+1})$ with $\pi \colon \oM_{0,n+3} \to \oM_{0,n+2}$ forgetting the $(n+3)^{\text{rd}}$ marked point, and 
\begin{equation*}
\Delta_j = \frac{1}{2} \sum_{\substack{T \subset \{1,\dots,n+2\} \\ \vert{T}\vert,\vert{T^c}\vert \ge 2}} (i_T)_{*}((\psi_a+\psi_b)^{j-1}).
\end{equation*}
Note here that the pushforwards and psi-classes are in the Chow ring, and that in this sum we do not impose the condition $\vert{T \cap \{1,2,3\}}\vert \le 1$ as we did in Proposition \ref{NewStringEquation}. Hence, we have that
\begin{equation} \label{SdM0n}
(-1)^{n-1}S_{n-1}(\oM_{0,n+2}) = \int_{\oM_{0,n+2}}^{\text{CH}} (\pi_{*}(\psi_{n+3}^{n})-\Delta_{n-1}) = 1-\int_{\oM_{0,n+2}}^{\text{CH}}\Delta_{n-1},
\end{equation} 
with the second equality using Example \ref{ChowK-theoryClosed}. Now, we have
\begin{align*}
\int_{\oM_{0,n+2}}^{\text{CH}} \Delta_{n-1} &= \frac{1}{2}\sum_{k=2}^{n}{n+2 \choose k} \int_{\oM_{0,k+1} \times \oM_{0,n-k+3}}^{\text{CH}} (\psi_a+\psi_b)^{n-2} \\
&= \frac{1}{2}\sum_{k=2}^{n} {n+2 \choose k} {n-2 \choose k-2} \left(\int_{\oM_{0,k+1}}^{\text{CH}} \psi_{\bullet}^{k-2}\right)\left(\int_{\oM_{0,n-k+3}}^{\text{CH}} \psi_{\bullet}^{n-k}\right) \\
&= \frac{1}{2} \sum_{k=2}^{n} {n+2 \choose k} {n-2 \choose n-k} \\
&= \frac{1}{2} {2n \choose n}. \\
\end{align*}
The result then follows by combining \eqref{Sn} and \eqref{SdM0n}.
\end{proof}

\subsection{The dilaton equation in cobordism}

In addition to the simplification of the string equation in cobordism as seen in Proposition \ref{NewStringEquation}, our modified definition of the psi-classes given by \eqref{NewCoordinates} yields a particularly simple dilaton equation in cobordism in genus zero, which has the identical form of the usual genus zero dilaton equation in the Chow ring \cite{Witten}. See \cite{Lee2} for the dilaton equation in $K$-theory in genus $g$. 

\begin{prop}[Dilaton equation]
For $n \ge 3$ and $0 \le |d| \le n-3$, we have
\begin{equation*}
\int_{\oM_{0,n+1}}^\Omega \psi_{n+1} \cdot \psi^d = (n-2)\int_{\oM_{0,n}}^\Omega \psi^d.
\end{equation*}
\end{prop}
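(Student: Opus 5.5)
The plan is to reduce the statement to a pushforward computation along the forgetful map $\pi\colon \oM_{0,n+1}\to\oM_{0,n}$ that forgets the $(n+1)^{\text{st}}$ point, in the same way as the proof of Proposition \ref{NewStringEquation}. There are three steps.

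\textbf{Step 1: replace $\psi^d$ by a pullback after multiplying by $\psi_{n+1}$.} By \eqref{ForgetfulRelation}, $\psi_i=\pi^*\psi_i+_\Omega D_i$. Writing $x+_\Omega y = x + y\,g(x,y)$ with $g\in\L^*[\![x,y]\!]$, the difference $\psi_i-\pi^*\psi_i$ is divisible by $D_i$. Moreover $D_i=\overline{-\widetilde D_i}$ is divisible by $\widetilde D_i=f_*(1)$, where $f$ is the inclusion of $D^{\{i,n+1\}}$. Generic points of $D^{\{i,n+1\}}$ have a rational bubble carrying only the markings $i$, $n+1$ and the node, so that bubble is $\oM_{0,3}$. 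Hence $L_{n+1}|_{D^{\{i,n+1\}}}$ is trivial, $f^*\psi_{n+1}=0$, and by the projection formula $\psi_{n+1}D_i=0$. It follows that
\begin{equation*}
\psi_{n+1}\cdot\psi^d=\psi_{n+1}\cdot\pi^*\psi^d .
\end{equation*}
The projection formula then gives $\int_{\oM_{0,n+1}}^\Omega\psi_{n+1}\psi^d=\int_{\oM_{0,n}}^\Omega\pi_*(\psi_{n+1})\,\psi^d$.

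\textbf{Step 2: show $\pi_*(\psi_{n+1})=n-2$ in $\Omega^0(\oM_{0,n})$.} This is the main obstacle. The class $\Omega^0(\oM_{0,n})$ is not just $\Z$, since it also contains terms such as $\L^{-k}\cdot\Omega^{k}(\oM_{0,n})$, for example $[\P^1]$ times divisor classes. One must therefore show that all correction terms vanish. Two further facts will be used:
\begin{itemize}
\item[] Only the component of $\pi_*(\psi_{n+1})$ paired against $\psi^d$ with $|d|\le n-3$ matters.
\item[] In the Chow ring the answer is the classical $\deg\omega_\pi(\sum D_i)=n-2$.
\end{itemize}
The first attempt is to write $L_{n+1}=\omega_\pi\bigl(\sum_i D^{\{i,n+1\}}\bigr)$ and expand
\begin{equation*}
\psi_{n+1}=\psi(\omega_\pi)+_\Omega\psi\bigl(\Oo(D^{\{1,n+1\}})\bigr)+_\Omega\dots+_\Omega\psi\bigl(\Oo(D^{\{n,n+1\}})\bigr).
\end{equation*}
Next, use $D_iD_j=0$ and $f^*\omega_\pi\cong L_i$, the latter by adjunction along the sections. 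Then compute $\pi_*$ of each term as in the proof of Proposition \ref{NewStringEquation}, which produces expressions of the form $\pi_*(D_i^\ell)=-(\overline{\psi}_i)^\ell/\psi_i$. The sum over $i$ of the section terms should collapse, through the identity $\psi_i+_\Omega\overline\psi_i=0$, to $n$ (up to the $\psi(\omega_\pi)$ cross terms). What remains is $\pi_*\psi(\omega_\pi)$. This should be computed by the same method as $\pi_*(1)$ in \cite[Proposition 3.6]{EllisBloor}: treat $\pi$ as a family of nodal curves, compare with a smooth $\P^1$-fibration, and localize the correction terms at the boundary divisors $D^T$. The expected outcome is $-2$ plus boundary corrections. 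Those corrections must cancel against the cross terms $\psi(\omega_\pi)\cdot D_i$ coming from the formal group law expansion. This cancellation is exactly where the modified convention $\psi=-c_1^\Omega((\cdot)^\vee)$ should be essential.

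\textbf{Step 3 (fallback for Step 2).} If the direct computation of $\pi_*\psi(\omega_\pi)$ proves unwieldy, I would instead argue numerically. Both sides of the statement are determined recursively by Proposition \ref{NewStringEquation}, since $\psi_{n+1}\psi^d$ is itself a monomial on $\oM_{0,n+1}$. One then proves the identity by induction on $n$. Apply the string equation to $\int_{\oM_{0,n+1}}^\Omega\psi_{n+1}\psi^d$ after relabelling a point other than $n+1$ as the forgotten one. Use the inductive hypothesis on each boundary factor $D^T$, noting that $\psi_{n+1}$ restricts to whichever factor contains $n+1$. Finally, check that the resulting recursion coincides with $(n-2)$ times the string recursion for $\int_{\oM_{0,n}}^\Omega\psi^d$. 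The bookkeeping of the boundary terms in this comparison is the delicate part, and the coefficient $n-2$ should arise as $(n-3)+1$ from the factor sizes.
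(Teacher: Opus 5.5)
\textbf{Step 2 has a genuine gap.} Your Step 1 is exactly the paper's first step. In Step 2, however, the cancellation mechanism you anticipate is not the right one.

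Carry out your expansion $\psi_{n+1}=\psi(\omega_\pi)+_\Omega\sum_i D_i$, which is valid since $D_iD_j=0$, with $a_{j\ell}$ as in the proof of Proposition \ref{NewStringEquation}. Because $f^*\psi(\omega_\pi)=\psi_i$, the cross terms are exactly what completes the formal group law identity:
\[
\pi_*\Bigl(D_i+\sum_{j,\ell\ge 1}a_{j\ell}\,\psi(\omega_\pi)^jD_i^\ell\Bigr)=-\frac{1}{\psi_i}\Bigl(\overline{\psi}_i+\sum_{j,\ell\ge 1}a_{j\ell}\,\psi_i^j\,\overline{\psi}_i^{\,\ell}\Bigr)=1 .
\]
So $\pi_*(\psi_{n+1})=n+\pi_*\psi(\omega_\pi)$ exactly. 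The cross terms are used up in producing $n$, and nothing remains to absorb boundary corrections in $\pi_*\psi(\omega_\pi)$. The proposition therefore hinges on $\pi_*c_1^\Omega(\omega_\pi^\vee)=2\cdot 1$ on the nose in $\Omega^0(\oM_{0,n})$. You leave this as an ``expected outcome'', but it is the real content of the paper's proof.

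The paper reaches the same reduction without any cross terms by dualizing. It writes $c_1^\Omega(\omega_\pi^\vee)=c_1^\Omega(L_{n+1}^\vee)+\sum_i\widetilde D_i$, which is a plain sum because $c_1^\Omega(L_{n+1}^\vee)\widetilde D_i=0=\widetilde D_i\widetilde D_j$. It then works up Keel's tower $B_{n-2}\to\dots\to B_1=\oM_{0,n}\times\P^1$ and shows $(f_{k+1})_*\alpha_{k+1}=\alpha_k$ for $\alpha_k=c_1^\Omega(\omega^\vee_{B_k/\oM_{0,n}})$. At each stage the correction $\alpha_k\bigl((f_{k+1})_*(1)-1\bigr)$ is computed using $i_{k,T}^*\alpha_k=-\psi_b$, which comes from adjunction and $N_{D^T/B_k}=(L_a^\vee\otimes L_b^\vee)\oplus L_b^\vee$. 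It cancels exactly against the exceptional-divisor term obtained from Vishik's formula. At the bottom, $\alpha_1=c_1^\Omega(\mathrm{pr}_2^*\Oo_{\P^1}(2))$ pushes forward to $2$. So the boundary corrections vanish stage by stage; they do not cancel against section terms.

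\textbf{Your fallback Step 3 does close, and it is a genuinely different proof.} The argument runs as follows.
\begin{itemize}
\item[] For $n\ge 4$, pick $k\le n$ with $d_k=0$ (possible since $|d|\le n-3$) and a reference triple in $\{1,\dots,n\}\setminus\{k\}$.
\item[] Apply Proposition \ref{NewStringEquation}, forgetting $k$, both to $\int_{\oM_{0,n+1}}^\Omega\psi_{n+1}\psi^d$ and to $\int_{\oM_{0,n}}^\Omega\psi^d$.
\item[] By induction, the $[\P^1]$ term and the terms with $i\neq n+1$ become $n-3$ times the corresponding terms for $\oM_{0,n}$.
\item[] The $i=n+1$ term is $\int_{\oM_{0,n}}^\Omega\psi^d$ itself.
\item[] A boundary divisor $D^{T'}$ of the target $\oM_{0,n-1}$ arises from $T=T'\cup\{n+1\}$ with weight $|T'|-1$, and from $T=T'$ with weight $|T'^c|-1$. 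The total weight is $|T'|+|T'^c|-2=n-3$.
\item[] Divisors on which $n+1$ lies on an $\oM_{0,3}$ factor contribute $0$. Whenever the degree bound fails, both sides of the dilaton identity vanish for dimension reasons.
\end{itemize}
Hence the left side equals $(n-3)\int_{\oM_{0,n}}^\Omega\psi^d+\int_{\oM_{0,n}}^\Omega\psi^d$, with base case $\int_{\oM_{0,4}}^\Omega\psi_4=1$.

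This route is purely formal. It shows that in genus zero the dilaton equation follows from the string equation, with all the geometry already contained in $\pi_*(1)$. But it only yields the numerical identity for $\psi$-monomials. The paper instead proves the stronger class identity $\pi_*(\psi_{n+1})=(n-2)\cdot 1$ in $\Omega^0(\oM_{0,n})$, which gives the dilaton equation for any insertion pulled back from $\oM_{0,n}$.
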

\begin{proof}
As in the proof of Proposition \ref{NewStringEquation}, let $\pi \colon \oM_{0,n+1} \to \oM_{0,n}$ forget the $(n+1)^{\text{st}}$-marked point, and let $D_{i} = \psi(\Oo(D^{\{i,n+1\}}))$. Recall that $D_{i}D_{j} = 0$ and $\psi_i D_{i} = 0$ for $i \neq j$. We also have that $\psi_{n+1}D_{i} = 0$ for all $i$, since the corresponding tautological line bundle $L_{n+1}$ is trivial when restricted to each divisor $D^{\{i,n+1\}}$. Using \eqref{ForgetfulRelation}, we therefore get that
\begin{align*}
\int_{\oM_{0,n+1}}^\Omega \psi_{n+1}\cdot \psi^d &= \int_{\oM_{0,n+1}}^\Omega \psi_{n+1} \cdot \prod_{i=1}^{n} (\pi^{*}\psi_{i}+_{\Omega} D_{i})^{d_i} \\
&= \int_{\oM_{0,n}}^\Omega \pi_{*}(\psi_{n+1}) \cdot \psi^d. \\
\end{align*}
Our goal now is to prove that $\pi_{*}(\psi_{n+1}) = (n-2) \cdot 1$. To do this, first recall that
\begin{equation} \label{Ln+1}
L_{n+1} = \omega_{\oM_{0,n+1}/\oM_{0,n}}\biggl(\sum_{i=1}^{n} D^{\{i,n+1\}}\biggr).
\end{equation}
Letting $\widetilde{D}_{i} = c_{1}^\Omega(\Oo(D^{\{i,n+1\}}))$, we have that $\widetilde{D}_{i}\widetilde{D}_{j} = 0$ for $i \neq j$, and $c_{1}^\Omega(L_{n+1}^{\vee})\widetilde{D}_{i} = 0$. Hence, taking duals in \eqref{Ln+1}, we get
\begin{equation*}
c_{1}^\Omega(\omega_{\oM_{0,n+1}/\oM_{0,n}}^\vee) = c_{1}^\Omega(L_{n+1}^\vee)+\sum_{i=1}^{n} \widetilde{D}_{i}.
\end{equation*}
As in the proof of Proposition \ref{NewStringEquation}, we have $\pi_{*}(\widetilde{D}_{i}) = 1$, and so we get
\begin{equation*}
\pi_{*}(\psi_{n+1}) = n \cdot 1-\pi_{*}c_{1}^\Omega(\omega_{\oM_{0,n+1}/\oM_{0,n}}^\vee).
\end{equation*}
To calculate the pushforward on the right-hand side, we use Keel's construction of $\oM_{0,n+1}$ as an iterated blow-up, as in the proof of the string equation in cobordism \cite{EllisBloor}, and we will briefly recall here the notation used in \cite[Section 3]{EllisBloor}. 

Let $B_{1} = \oM_{0,n} \times \P^1$, and embed each divisor $D^T$ as a codimension two subvariety of $B_{1}$ via the composite $(\pi,\pi_{1,2,3,n+1}) \circ \sigma^{i}$ for $i \in T$, where $\sigma^i \colon \oM_{0,n} \to \oM_{0,n+1}$ is the section corresponding to the $i^{\text{th}}$-marked point and $\pi_{1,2,3,n+1}$ forgets all but the markings labeled by $1,2,3,n+1$. Then, for each $k \ge 1$, let $f_{k+1} \colon B_{k+1} \to B_{k}$ be the blow-up along the strict transforms of the $D^T$ such that $\vert{T^c}\vert = k+1$ (which are disjoint), and let $g_{k+1}$ be the composite $B_{k+1} \to B_{1}$. By Keel \cite{Keel}, we have that $\oM_{0,n+1}$ is isomorphic to $B_{n-2}$. 

Consider
\begin{equation*}
\omega_{B_{k+1}/\oM_{0,n}} = \omega_{B_{k+1}} \otimes g_{k+1}^*p_{1}^*\omega_{\oM_{0,n}}^{\vee},
\end{equation*}
where $p_{1} \colon B_{1} \to \oM_{0,n}$ is the projection. For each $\vert{T^c}\vert = k+1$, let $E_{k+1}^T$ denote the corresponding exceptional divisor of the blow-up $f_{k+1} \colon B_{k+1} \to B_{k}$. Since $f_{k+1}$ is a blow-up along codimension two centers, we have
\begin{equation*}
\omega_{B_{k+1}} = f_{k+1}^*\omega_{B_k} \otimes \Oo\biggl(\sum_{\vert{T^c}\vert = k+1} E_{k+1}^T\biggr),
\end{equation*}
and hence
\begin{equation} \label{OmegaDual}
\omega_{B_{k+1}/\oM_{0,n}}^{\vee} = f_{k+1}^*\omega_{B_k/\oM_{0,n}}^{\vee}\otimes\Oo\biggl(-\sum_{\vert{T^c}\vert = k+1} E_{k+1}^T\biggr).
\end{equation}
For ease of notation, let
\begin{equation*}
\alpha_{k} = c_{1}^\Omega(\omega_{B_k/\oM_{0,n}}^{\vee}) \hspace{0.5cm} \text{and} \hspace{0.5cm} e_{T} = c_{1}^\Omega(\Oo(E_{k+1}^T)).
\end{equation*}
We first prove that $(f_{k+1})_{*}\alpha_{k+1} = \alpha_k$. By \eqref{OmegaDual}, and using that the exceptional divisors $E_{k+1}^T$ for $\vert{T^c}\vert = k+1$ are disjoint, we have
\begin{align*}
\alpha_{k+1} &= f_{k+1}^{*}\alpha_{k}+_{\widetilde\Omega} \biggl(\sum_{\vert{T^c}\vert = k+1} \iota({e}_{T})\biggr) \\
&= f_{k+1}^*\alpha_{k}+\sum_{\vert{T^c}\vert = k+1} ((f_{k+1}^{*}\alpha_k+_{\widetilde\Omega}\iota({e}_{T}))-f_{k+1}^{*}\alpha_k). \\
\end{align*}
We now push forward by $f_{k+1}$. For $\vert{T^c}\vert = k+1$, let $i_{k,T} \colon D^T \to B_{k}$ be the inclusion. By the projection formula, and the calculation in the proof of \cite[Proposition 3.6]{EllisBloor}, we have
\begin{align*}
(f_{k+1})_{*}f_{k+1}^{*}\alpha_{k} &= \alpha_{k}\cdot(f_{k+1})_{*}(1) \\
&= \alpha_{k}\biggl(1+\sum_{\vert{T^c}\vert = k+1} (i_{k,T})_{*}\biggl(\frac{1}{\overline{\psi}_a\bigl(\psi_{a}+_{\Omega}\psi_{b}\bigr)} + \frac{1}{\psi_a\psi_b} - \frac{1}{\bigl(\psi_{a}+_{\Omega}\psi_{b} \bigr) \psi_{b}}\biggr)\biggr) \\
&= \alpha_{k}+\sum_{\vert{T^c}\vert = k+1} (i_{k,T})_{*} \biggl((i_{k,T}^{*}\alpha_k)\biggl(\frac{1}{\overline{\psi}_a\bigl(\psi_{a}+_{\Omega}\psi_{b}\bigr)}
     + \frac{1}{\psi_a\psi_b} - \frac{1}{\bigl(\psi_{a}+_{\Omega}\psi_{b} \bigr) \psi_{b}}\biggr)\biggr). \\
\end{align*}
To calculate $i_{k,T}^{*}\alpha_k$, the adjunction formula gives that
\begin{equation*}
i_{k,T}^{*}\omega_{B_k} = \omega_{D^T} \otimes \det N_{D^T/B_{k}}^\vee \hspace{0.5cm} \text{and} \hspace{0.5cm} i_{T}^*\omega_{\oM_{0,n}} = \omega_{D^T} \otimes N_{D^T/\oM_{0,n}}^\vee,
\end{equation*}
where $i_T \colon D^T \to \oM_{0,n}$ is the inclusion, satisfying $i_{T} = p_{1} \circ g_{k} \circ i_{k,T}$. By \cite[Lemma 3.4]{EllisBloor}, we have $N_{D^T/B_{k}} = (L_{a}^\vee \otimes L_{b}^\vee) \oplus L_{b}^\vee$. Then, since $N_{D^T/\oM_{0,n}} = L_{a}^{\vee} \otimes L_{b}^{\vee}$, it follows that
\begin{equation*}
i_{k,T}^{*}\omega_{B_k/\oM_{0,n}} = i_{k,T}^{*}\omega_{B_k} \otimes i_{T}^{*}\omega_{\oM_{0,n}}^\vee = L_{b},
\end{equation*}
and hence $i_{k,T}^*\alpha_k = -\psi_b$. That is, we have
\begin{equation} \label{FirstTerm}
(f_{k+1})_{*}f_{k+1}^{*}\alpha_{k} = \alpha_k+\sum_{\vert{T^c}\vert = k+1} (i_{k,T})_{*} \biggl(-\frac{\psi_b}{\overline{\psi}_a\bigl(\psi_{a}+_{\Omega}\psi_{b}\bigr)}
     - \frac{1}{\psi_a} + \frac{1}{\psi_{a}+_{\Omega}\psi_{b} }\biggr). 
\end{equation}
Next, for $\vert{T^c}\vert = k+1$, let $j_{k,T} \colon E_{k+1}^T \to B_{k+1}$ be the inclusion, so that $e_{T} = (j_{k,T})_{*}(1)$. Note also that $j_{k,T}^{*}e_{T} = \iota(\xi)$ where $\xi = c_{1}^\Omega(\Oo(1))$, identifying $E_{k+1}^T$ with $\P_{D^T}(N_{D^T/B_{k}})$. Then, using that $(f_{k+1}^{*}\alpha_k+_{\widetilde\Omega}\iota({e}_{T}))-f_{k+1}^{*}\alpha_k$ is divisible by $e_{T}$ together with the projection formula and the fact that $f_{k+1} \circ j_{k,T} = i_{k,T} \circ p$ where $p \colon \P_{D^T}(N_{D^T/B_{k}}) \to D^T$ is the projection, we have
\begin{align*}
(f_{k+1})_{*}((f_{k+1}^{*}\alpha_k+_{\widetilde\Omega}\iota({e}_{T}))-f_{k+1}^{*}\alpha_k) &= (f_{k+1})_{*}\biggl((j_{k,T})_{*}(1) \cdot \frac{(f_{k+1}^{*}\alpha_k+_{\widetilde\Omega}\iota({e}_{T}))-f_{k+1}^{*}\alpha_k}{e_{T}}\biggr) \\
&= (i_{k,T})_{*}p_{*}\biggl(\frac{(p^{*}i_{k,T}^{*}\alpha_k+_{\widetilde\Omega}\xi)-p^{*}i_{k,T}^*\alpha_k}{\iota(\xi)}\biggr) \\
&= (i_{k,T})_{*}p_{*}\biggl(\frac{((-p^{*}\psi_b)+_{\widetilde\Omega}\xi)+p^{*}\psi_b}{\iota(\xi)}\biggr). \\
\end{align*}
Using the fact as above that $N_{D^T/B_{k}} = (L_{a}^\vee \otimes L_{b}^\vee) \oplus L_{b}^\vee$, it follows from \cite[Theorem 5.30]{Vishik} that
\begin{equation*}
p_{*}\biggl(\frac{((-p^{*}\psi_b)+_{\widetilde\Omega}\xi)+p^{*}\psi_b}{\iota(\xi)}\biggr) = \frac{\psi_b}{\overline{\psi}_a\bigl(\psi_{a}+_{\Omega}\psi_{b}\bigr)}
     + \frac{1}{\psi_a} - \frac{1}{\psi_{a}+_{\Omega}\psi_{b} }.
\end{equation*}
See the proof of \cite[Proposition 3.6]{EllisBloor} for a similar calculation. Together with \eqref{FirstTerm}, we have now shown that
\begin{align*}
(f_{k+1})_{*}\alpha_{k+1} &= (f_{k+1})_{*}f_{k+1}^{*}\alpha_{k}+\sum_{\vert{T^c}\vert = k+1}(f_{k+1})_{*}((f_{k+1}^{*}\alpha_k+_{\widetilde\Omega}\iota({e}_{T}))-f_{k+1}^{*}\alpha_k) \\
&= \alpha_k. 
\end{align*}
By induction, we get that
\begin{equation*}
\pi_{*}c_{1}^\Omega(\omega_{\oM_{0,n+1}/\oM_{0,n}}^\vee) = (p_{1})_{*}c_{1}^\Omega(\omega_{B_{1}/\oM_{0,n}}^\vee) = (p_{1})_{*}c_{1}^\Omega(\text{pr}_{2}^*\Oo_{\P^1}(2)) = 2 \cdot 1,
\end{equation*}
where $\text{pr}_{2} \colon B_1 \to \P^1$ is the projection.
\end{proof}

\subsection{The hyperbolic formal group law} \label{HyperbolicFGLSection}

Let $\mu_1$ and $\mu_2$ be formal variables of degree $-1$ and $-2$ respectively. Then, the \textit{hyperbolic formal group law} in the convention of \eqref{NewFGL} is the formal group law over $\Z[\mu_1,\mu_2]$ defined by
\begin{equation} \label{HyperbolicFGL}
x+_{H} y = \frac{x+y+\mu_1 xy}{1+\mu_2 xy}.
\end{equation}
Note that \eqref{HyperbolicFGL} is precisely the formal group law associated to the singular elliptic curve given in Weierstrass coordinates by
\begin{equation*}
y^2-\mu_1 xy = x^3+\mu_2 x^2,
\end{equation*}
see \cite{LZZ}. If we additionally adjoin variables $\alpha$ and $\beta$ of degree $-1$ satisfying $\mu_1 = \alpha+\beta$ and $\mu_2 = -\alpha\beta$, then \eqref{HyperbolicFGL} in our convention is the formal group law associated to Hirzebruch's 2-parameter Todd genus \cite{Hirzebruch}. In particular, we get the Todd genus when $\mu_2 = 0$, the signature when $\mu_1 = 0$, and the Euler characteristic when $\alpha = \beta$. In the case of $\mu_1 = 0$, i.e. $\alpha = -\beta$, the formal group law
\begin{equation} \label{LorentzFGL}
x+_{L} y := \frac{x+y}{1+\alpha^2 xy}
\end{equation}
is called the \textit{Lorentz formal group law}. We now consider the restriction of Proposition \ref{NewStringEquation} to the hyperbolic formal group law. Form the exponential generating function
\begin{equation} \label{GenFun}
F = \sum_{n \ge 3} \frac{1}{n!} \sum_{d} t_{d} \int_{\oM_{0,n}}^H \psi^d,
\end{equation}
where $t_d = t_{d_1}\dotsm t_{d_n}$. 
\begin{prop} \label{StringHyperbolic}
In the oriented cohomology theory corresponding to the hyperbolic formal group law, we have that
\begin{equation*}
\frac{\partial F}{\partial t_{0}} = \frac{t_0^2}{2}+\mu_{1} F+\sum_{k \ge 0} t_{k+1}\frac{\partial F}{\partial t_k}-\frac{\mu_2}{2}\left(\frac{\partial F}{\partial t_0}\right)^2.
\end{equation*}
Together with the initial condition $F \vert_{t_0 = 0} = 0$, this differential equation uniquely determines $F$.
\end{prop}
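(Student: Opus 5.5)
Specialize the string equation (Proposition \ref{NewStringEquation}) to the hyperbolic formal group law, using the simplification \eqref{HyperbolicSimplification}. This turns the boundary correction into $-\mu_2$ times an integral over $D^T \cong \oM_{0,|T|+1}\times\oM_{0,|T^c|+1}$ of $i_T^*\psi^d$. The point is that $i_T^*\psi_i$ restricts to the psi-class of marking $i$ on whichever factor contains $i$. The integral therefore factors as a product of two psi-integrals with no insertion at the node markings $\bullet$. We also need the coefficient of $[\P^1]$: in the hyperbolic law $[\P^1]$ is the coefficient of $xy$ in $x+_H y$ up to sign conventions, so $[\P^1]=\mu_1$. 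I would double-check this against the $\chi_y$ case, where $\chi_y(\P^1)=1-y$ matches $\mu_1=1-y$.

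Next, translate each term into generating functions. Write $\langle \tau_{d_1}\cdots\tau_{d_n}\rangle = \int^H_{\oM_{0,n}}\psi^d$, so that $F=\sum \langle\cdots\rangle \prod t_{d_i}/\mathrm{Aut}$ in the usual exponential sense. The left side $\langle\tau_0\tau_d\rangle$ summed over $n$ gives $\partial F/\partial t_0$. The term $\mu_1\langle\tau_d\rangle$ gives $\mu_1F$. The terms $\sum_i\langle\psi^d/\psi_i\rangle$ give $\sum_k t_{k+1}\partial F/\partial t_k$, exactly as in Witten's string equation. The initial case $n=3$ in the target, $\langle\tau_0^3\rangle=1$, which lies outside the $n\ge3$ recursion on $\oM_{0,n+1}$, contributes $t_0^2/2$.

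For the boundary sum, note that the condition $|T\cap\{1,2,3\}|\le1$ in Proposition \ref{NewStringEquation} is the one coming from Keel's blow-up construction. Relabelling via symmetry, the sum should be over splittings with the new marking $n+1$ on one side and both sides stable. The constraint is equivalent to taking one distinguished point (say the forgotten point, after symmetrization) together with an unordered partition into two sides. Each side contributes a correlator with a $\tau_0$ inserted at the node, producing $\partial F/\partial t_0$. This yields $-\frac{\mu_2}{2}(\partial F/\partial t_0)^2$, where the $\frac12$ accounts for the unordered pair of sides. The subtle point, and what I expect to be the main obstacle, is matching the indexing set $\{T: |T|,|T^c|\ge2,\ |T\cap\{1,2,3\}|\le1\}$ with the symmetric quadratic expression. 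The string equation as stated privileges markings $1,2,3$ rather than $n+1$. So I would first use the $S_n$-invariance of the integrals to re-express the sum symmetrically, checking low cases ($n=4,5$) against the quadratic Keel--Manin recursion to confirm the combinatorial factor, including the node contributing a $\tau_0$ on each side (which the $(\partial F/\partial t_0)^2$ form requires).

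Finally, uniqueness. Write $F=\sum_m F_m t_0^m/m!$ with $F_m$ depending on the other $t_k$. The equation then expresses $F_{m+1}$ in terms of $F_{\le m}$ and lower-degree data. Together with $F|_{t_0=0}=0$, which holds since $\int\psi^d=0$ when $|d|>n-3$, i.e.\ for all $d$ with no zero entry, induction on the power of $t_0$ determines $F$ uniquely.
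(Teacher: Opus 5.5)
Your proposal is correct and follows the same route as the paper. You specialize Proposition \ref{NewStringEquation} using the constant correction $-\mu_2$ and $[\P^1]=\mu_1$ (the $xy$-coefficient of $x+_H y$ in the paper's convention), factor each boundary integral into two correlators with $\tau_0$ at the node, and sum over $n$, with $\int_{\oM_{0,3}}^H 1=1$ supplying $t_0^2/2$. The step you flag as the main obstacle is immediate and needs no $S_n$-symmetrization or low-case checks: the marking $n+1$ plays no role in the indexing set, and since $\{1,2,3\}$ cannot be split evenly, the condition $|T\cap\{1,2,3\}|\le 1$ just selects exactly one side of each unordered bipartition $\{T,T^c\}$ of $\{1,\dots,n\}$. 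The summand $-\mu_2\langle\cdots\rangle\langle\cdots\rangle$ is symmetric in the two sides, so the boundary sum runs over unordered bipartitions, which is exactly $\tfrac12$ times the ordered sum producing $(\partial F/\partial t_0)^2$.
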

\begin{proof}
The inverse of the hyperbolic formal group law is given by
\begin{equation} \label{InverseHyperbolicFGL}
\overline{x} = \frac{-x}{1+\mu_1 x}.
\end{equation}
A direct calculation then gives that
\begin{equation*}
\frac{1}{\overline{x}\bigl(x+_{H}y\bigr)}  
     - \frac{1}{\bigl(x+_{H}y \bigr) y} + \frac{1}{xy} = -\mu_2.
\end{equation*}
Furthermore, since the coefficient of $xy$ in the universal formal group law $x+_{\Omega} y$ is $[\P^1]$, the string equation becomes
\begin{equation*}
\int_{\oM_{0,n+1}}^H 1\cdot\psi^d = \mu_1 \int_{\oM_{0,n}}^H \psi^d
      +\sum_{i=1}^{n}\int_{\oM_{0,n}}^H \psi^d/\psi_i
      -\mu_2 \sum_T \biggl(\int_{\oM_{0,\vert{T}\vert+1}}^H \prod_{\ell \in T} \psi_\ell^{d_{\ell}}\biggr)\biggl(\int_{\oM_{0,\vert{T^c}\vert+1}}^H \prod_{\ell \in T^c} \psi_\ell^{d_{\ell}}\biggr).
\end{equation*}
The result follows by summing over $n$ using the definition in \eqref{GenFun} and the fact that $\int_{\oM_{0,3}}^H 1 = 1$. 
\end{proof}
We now aim to give a closed formula in the case $d_2 = \dots = d_n = 0$, i.e. we will look at the one-point pushforwards $\int_{\oM_{0,n}}^H \psi_1^{d_1}$. Define
\begin{equation*} 
\Hh(z,q) = \sum_{n \ge 3} \frac{q^{n-1}}{(n-1)!}\sum_{j=0}^{n-3} z^{j} \int_{\oM_{0,n}}^H \psi_1^{j}.
\end{equation*}
\begin{cor} \label{HyperbolicDE}
The generating function $\Hh(z,q)$ is uniquely determined by the differential equation
\begin{equation*}
(1+\mu_2 \Hh(0,q))\frac{\partial \Hh(z,q)}{\partial q} = (z+\mu_1)\Hh+q,
\end{equation*}
together with the initial condition $\Hh(z,0) = 0$. 
\end{cor}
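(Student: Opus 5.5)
We will specialize the hyperbolic string equation from the proof of Proposition \ref{StringHyperbolic} to the case $d = (j,0,\dots,0)$ and then repackage it in terms of $\Hh(z,q)$.

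\emph{Step 1: the recursion.} For $d=(j,0,\dots,0)$ we have $\psi^d/\psi_1 = \psi_1^{j-1}$, and $\psi^d/\psi_i=0$ for $i\ge 2$. In the boundary sum over $T$, exactly one of $T$ and $T^c$ contains the marking $1$. That side carries $\int^H \psi_1^j$ on a space with $|T|+1$ or $|T^c|+1$ points. The other side carries only $\int^H 1$, that is, $[\oM_{0,m}]$ restricted to $H$.

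The convention $|T\cap\{1,2,3\}|\le 1$ breaks the symmetry between $T$ and $T^c$. To handle this cleanly, I would not sum over $T$ directly but use the symmetric form $-\frac{\mu_2}{2}(\partial F/\partial t_0)^2$ of Proposition \ref{StringHyperbolic}. This corresponds to summing over all unordered splittings, counted once each. I would then differentiate that equation with respect to $t_j$ and set $t_k=0$ for $k\ge 1$.

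\emph{Step 2: specialization.} Write $f(t_0)=F|_{t_{\ge1}=0}$, and set $g_j(t_0)=\partial F/\partial t_j|_{t_{\ge1}=0}=\sum_n \frac{t_0^{n-1}}{(n-1)!}\int_{\oM_{0,n}}^H\psi_1^j$. Then $\Hh(z,q)=\sum_j z^j g_j(q)$, and $\Hh(0,q)=g_0=f'$.

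Applying $\partial/\partial t_j$ to the equation of Proposition \ref{StringHyperbolic}, then setting $t_{\ge1}=0$, gives
\[
g_j' = \delta_{j0}\,t_0 + \mu_1 g_j + g_{j-1} - \mu_2 f' g_j'.
\]
The term $g_{j-1}$ comes from $\sum_k t_{k+1}\partial F/\partial t_k$: differentiating $t_{k+1}$ in $t_j$ leaves $\partial F/\partial t_{j-1}$, while the terms where the derivative hits $\partial F/\partial t_k$ vanish since they retain a factor $t_{k+1}$. The $\mu_2$ term uses $\partial/\partial t_j\,(\partial_{t_0}F)^2 = 2\,\partial_{t_0}F\cdot\partial_{t_0}\partial_{t_j}F$, together with the fact that $\partial_{t_0}$ commutes with the specialization. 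The term $t_0^2/2$ contributes only when $j=0$, where it gives $t_0$ after taking $\partial_{t_0}$. I need to check this carefully: differentiating $t_0^2/2$ in $t_j$ is zero for $j\neq0$, but for $j=0$ it is $t_0$.

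Multiplying by $z^j$ and summing over $j$ gives
\[
(1+\mu_2\Hh(0,q))\,\partial_q\Hh = (z+\mu_1)\Hh + q.
\]

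\emph{Step 3: uniqueness.} This is the step I expect to need the most bookkeeping. First set $z=0$: this gives an autonomous ODE $(1+\mu_2 h)h' = \mu_1 h + q$ with $h(0)=0$, which determines $h=\Hh(0,q)$ coefficientwise in $q$. The coefficient of $q^{m}$ in $h'$ is expressed through lower coefficients, since $\mu_2 h h'$ only involves coefficients of lower order because $h=O(q^2)$.

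Then, with $h$ known, the equation is linear in $\Hh$ with leading factor $1+\mu_2 h$, which is invertible in power series. Each coefficient of $q^{n-1}$ is therefore determined recursively from the initial condition $\Hh(z,0)=0$. Finally, I would confirm that the initial term $\int_{\oM_{0,3}}^H 1 = 1$ is reproduced, namely as the coefficient $q^2/2$ coming from the $q$ on the right-hand side.
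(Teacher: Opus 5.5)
Your proposal is correct and is essentially the paper's proof. The paper applies the operator $D_z=\sum_{j\ge 0} z^j\,\partial/\partial t_j$ to the equation of Proposition~\ref{StringHyperbolic} and then specializes $t_0=q$, $t_i=0$ for $i>0$, which is exactly your coefficientwise differentiation in $t_j$ followed by summing against $z^j$. Your Step 3 makes explicit the uniqueness claim that the paper leaves implicit: you first solve the $z=0$ equation recursively, using $\Hh(0,q)=O(q^2)$ (that equation is not literally autonomous, but this does not matter), and then solve the linear equation whose leading factor $1+\mu_2\Hh(0,q)$ is invertible.
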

\begin{proof}
Define the operator
\begin{equation*}
D_z = \sum_{j \ge 0} z^j \frac{\partial}{\partial t_j}.
\end{equation*}
We have that
\begin{equation*}
\Hh(z,q) = D_{z}F \vert_{t_0 = q, \text{   } t_i = 0 \text{ for $i > 0$}}.
\end{equation*}
The result then follows directly by applying $D_z$ to both sides of the string equation in Proposition \ref{StringHyperbolic} (and substituting $t_0 = q$ and $t_i = 0$ for $i > 0$), and rearranging terms, noting for example that
\begin{equation*}
D_{z} \biggl(\sum_{k \ge 0} t_{k+1}\frac{\partial F}{\partial t_k}\biggr) \bigg\vert_{t_0 = q, \text{   } t_i = 0 \text{ for $i > 0$}} = z\Hh(z,q).
\end{equation*}
\end{proof}
We can in fact go further to produce a closed formula for $\Hh(z,q)$. For any parameter $\lambda$ define the series
\begin{equation*}
\Phi_{\lambda}(q) = \sum_{j \ge 1} \lambda^{j-1}\frac{q^j}{j!} = \frac{e^{\lambda q}-1}{\lambda},
\end{equation*}
where in the second equality we understand that $\Phi_{0}(q) = q$. Recalling that $\mu_1 = \alpha+\beta$ and $\mu_2 = -\alpha\beta$, let $\tau(q)$ denote the compositional inverse of the power series
\begin{equation} \label{DefOmega}
\omega(q) = \frac{\alpha\Phi_\beta(q)-\beta\Phi_\alpha(q)}{\alpha-\beta}.
\end{equation}
\begin{thm} \label{HyperbolicClosedFormula}
The generating function $\Hh(z,q)$ in the hyperbolic formal group law is given by
\begin{equation*}
\Hh(z,q) = \frac{z(\Phi_{z+\alpha+\beta}(\tau(q))-q)+\alpha\beta \Hh(0,q)}{(z+\alpha)(z+\beta)} = \frac{z(\Phi_{z+\mu_1}(\tau(q))-q)-\mu_2 \Hh(0,q)}{z^{2}+\mu_1 z-\mu_2},
\end{equation*}
where 
\begin{equation*}
\Hh(0,q) = \frac{\Phi_{\alpha}(\tau(q))-\Phi_{\beta}(\tau(q))}{\alpha-\beta}.
\end{equation*}
\end{thm}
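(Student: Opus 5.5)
The plan is to solve the first-order differential equation of Corollary \ref{HyperbolicDE} explicitly. First set $z=0$ to get an autonomous equation for $h(q):=\Hh(0,q)$, namely $(1+\mu_2 h)h' = \mu_1 h + q$. Since the right side involves $q$, it is not separable as written. The trick is to change variables to $s=\tau(q)$, i.e.\ $q=\omega(s)$, and to guess $h = \frac{\Phi_\alpha(s)-\Phi_\beta(s)}{\alpha-\beta}$.

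To verify the guess, note $\omega(s) = \frac{\alpha\Phi_\beta(s)-\beta\Phi_\alpha(s)}{\alpha-\beta}$ and $\omega'(s) = \frac{\alpha e^{\beta s}-\beta e^{\alpha s}}{\alpha-\beta}$. Using $\Phi_\lambda'=e^{\lambda s}=1+\lambda\Phi_\lambda$, we get $\frac{dh}{ds} = \frac{e^{\alpha s}-e^{\beta s}}{\alpha-\beta}$. One then checks two identities:
\begin{equation*}
1+\mu_2 h = 1-\alpha\beta h = \omega'(s), \qquad \mu_1 h+\omega(s) = \frac{dh}{ds}.
\end{equation*}
Both are routine linear identities in $e^{\alpha s},e^{\beta s}$. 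Together they give $(1+\mu_2h)\frac{dh}{dq} = \omega'(s)\frac{dh/ds}{\omega'(s)} = \mu_1 h+q$, as required.

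With this, the full equation becomes a linear ODE in $s$. Since $(1+\mu_2 h)\,\partial_q = \omega'(s)\,\partial_q = \partial_s$, the equation reads
\begin{equation*}
\partial_s\Hh = (z+\mu_1)\Hh + \omega(s).
\end{equation*}
I would not integrate this; instead I would plug in the candidate $\Hh = \frac{z(\Phi_{z+\mu_1}(s)-\omega(s))-\mu_2 h}{z^2+\mu_1 z-\mu_2}$ and verify it directly. Write $\lambda=z+\mu_1$. Then $\partial_s\Phi_\lambda = 1+\lambda\Phi_\lambda$, $\partial_s\omega = 1-\alpha\beta h = 1+\mu_2 h$, and $\partial_s h = \mu_1 h+\omega$. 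Substituting, the $\Phi_\lambda$ terms match exactly. It remains to compare the coefficients of $\omega$, of $h$, and of the constant term, after multiplying through by $z^2+\mu_1z-\mu_2$:
\begin{itemize}
\item The constant terms are $z$ on both sides.
\item For $\omega$, the left side gives $-z\mu_1\omega-\mu_2\omega$, and the right side gives $-z\lambda\omega+(z^2+\mu_1z-\mu_2)\omega = -\mu_2\omega$. These do not obviously agree, so I would recompute carefully here; the sign bookkeeping in this step is the only real obstacle.
\end{itemize}
Should the direct check fail, the fallback is variation of constants: $\Hh = e^{\lambda s}\int_0^s e^{-\lambda u}\omega(u)\,du$. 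Evaluating this with the exponentials in $\omega$ and $h$ yields the partial-fraction denominator $(z+\alpha)(z+\beta)=(\lambda-\beta)(\lambda-\alpha)$, which is where it arises naturally.

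Finally, the initial condition is automatic. At $q=0$ we have $s=0$, where $\Phi_\lambda$, $\omega$ and $h$ all vanish, so $\Hh(z,0)=0$. Uniqueness in Corollary \ref{HyperbolicDE} then identifies the candidate with $\Hh$. One should also note that the formula is a genuine power series in $z$, with the apparent poles cancelling. This follows from uniqueness, or by checking that the numerator vanishes at $z=-\alpha$ and $z=-\beta$. The second displayed form of the formula is just $\alpha\beta=-\mu_2$ and $(z+\alpha)(z+\beta)=z^2+\mu_1z-\mu_2$.
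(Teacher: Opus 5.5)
Your route is the paper's. The substitution $q=\omega(s)$, the identification of $\Hh(0,\omega(s))$ with your $h$ (the paper's $\nu$) via the identities \eqref{domegadnu} and uniqueness, and the reduction to $\partial_s\chi=(z+\mu_1)\chi+\omega$ for $\chi(z,s)=\Hh(z,\omega(s))$ are exactly the steps leading to \eqref{dchi}.

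The one real gap is that you left the decisive final check unresolved. The discrepancy you found is an arithmetic slip, not an obstruction. Put $D=z^2+\mu_1z-\mu_2$, $\lambda=z+\mu_1$ and $Y=z\Phi_\lambda-z\omega-\mu_2h$. The derivative rules you listed yourself give
\begin{equation*}
\partial_sY=z(1+\lambda\Phi_\lambda)-z(1+\mu_2h)-\mu_2(\mu_1h+\omega)=z\lambda\Phi_\lambda-\mu_2\omega-\mu_2(z+\mu_1)h,
\end{equation*}
while, using $D-z\lambda=-\mu_2$,
\begin{equation*}
\lambda Y+D\omega=z\lambda\Phi_\lambda+(D-z\lambda)\omega-\mu_2\lambda h=z\lambda\Phi_\lambda-\mu_2\omega-\mu_2(z+\mu_1)h.
\end{equation*}
\begin{itemize}
\item The constant terms are $0$ on both sides, not $z$: the $z$ from $z\,\partial_s\Phi_\lambda$ cancels the $-z$ from $-z\,\partial_s\omega$.
\item The $\omega$-coefficient on the left is just $-\mu_2$. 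Since $\partial_s\omega=1+\mu_2h$ contains no $\omega$, your term $-z\mu_1\omega$ is spurious.
\item The $h$-coefficients, which you never compared, also agree.
\end{itemize}
So the candidate works and no variation-of-constants fallback is needed.

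It is cleaner to conclude without dividing. $D\chi$ and $Y$ both solve $\partial_sY=\lambda Y+D\omega$ with value $0$ at $s=0$, so they are equal in $\Q[\alpha,\beta,z][\![s]\!]$. Dividing by $D=(z+\alpha)(z+\beta)$ and substituting $s=\tau(q)$ then gives the theorem, and the question of cancelling poles never arises.

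This is the paper's argument in different packaging. The paper sets $R=(z+\alpha)(z+\beta)\chi+z\omega-\alpha\beta\nu$, which equals $D\chi-Y+z\Phi_\lambda$. The same cancellation gives $\partial R=\lambda R+z$ with $R(z,0)=0$, hence $R=z\Phi_\lambda$. The paper derives the formula instead of guessing and verifying it, but the computation is identical.
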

\begin{proof}
Let 
\begin{equation*}
\nu(q) = \frac{\Phi_{\alpha}(q)-\Phi_{\beta}(q)}{\alpha-\beta}.
\end{equation*}
Since for any fixed parameter $\lambda$ we have ${\Phi'_{\lambda}}(q) = 1+\lambda \Phi_{\lambda}(q)$, it follows that
\begin{equation} \label{domegadnu}
\frac{d\omega}{dq} = 1-\alpha\beta\nu \hspace{0.5cm} \text{    and    } \hspace{0.5cm} \frac{d\nu}{dq} = \omega+(\alpha+\beta)\nu.
\end{equation}
Now, let $\rho = \nu \circ \tau$. Since $\tau$ is by definition the compositional inverse of $\omega$, we get that
\begin{align*}
\frac{d\rho}{dq} &= (\omega(\tau(q))+(\alpha+\beta)\nu(\tau(q)))\frac{d\tau}{dq} \\
&= (q+(\alpha+\beta)\rho)\frac{d\tau}{dq} \\
&= \frac{q+(\alpha+\beta)\rho}{1-\alpha\beta\rho}, \\
\end{align*}
with the final equality following from differentiating both sides of the equality $\omega(\tau(q)) = q$. Re-arranging, we get
\begin{equation*}
(1-\alpha\beta\rho)\frac{d\rho}{dq} = (\alpha+\beta)\rho+q.
\end{equation*}
This is precisely the differential equation in Corollary \ref{HyperbolicDE} after substituting $z = 0$, and so, since $\rho(0)=0$, uniqueness gives that $\rho(q) = \Hh(0,q)$, which proves the desired formula for $\Hh(0,q)$.

Next, define $\chi(z,q) = \Hh(z,\omega(q))$. We have $\Hh(0,\omega(q)) = (\nu \circ \tau \circ \omega)(q) = \nu(q)$, and therefore by Corollary \ref{HyperbolicDE},
\begin{equation*}
(1-\alpha\beta\nu)\frac{\partial \Hh}{\partial q}(z,\omega(q)) = (z+\alpha+\beta)\chi+\omega.
\end{equation*}
But by the chain rule,
\begin{equation*} 
\frac{\partial\chi}{\partial q} = \frac{\partial \Hh}{\partial q}(z,\omega(q))\cdot(1-\alpha\beta\nu),
\end{equation*}
and so we get that
\begin{equation} \label{dchi}
\frac{\partial\chi}{\partial q} = (z+\alpha+\beta)\chi+\omega.
\end{equation}
Finally, define
\begin{equation} \label{DefR}
R(z,q) = (z+\alpha)(z+\beta)\chi(z,q)+z\omega(q)-\alpha\beta\nu(q).
\end{equation}
Using \eqref{domegadnu} and \eqref{dchi}, we get that
\begin{equation*}
\frac{\partial R}{\partial q} = (z+\alpha+\beta)R+z.
\end{equation*}
Since $R(z,0) = 0$, it follows that $R(z,q) = z\Phi_{z+\alpha+\beta}(q)$. Replacing $q$ with $\tau(q)$ in \eqref{DefR} therefore gives
\begin{equation*}
z\Phi_{z+\alpha+\beta}(\tau(q)) = (z+\alpha)(z+\beta)\Hh(z,q)+zq-\alpha\beta \Hh(0,q),
\end{equation*}
as desired.
\end{proof}
We get a particularly simple closed formula in the case of the Lorentz formal group law \eqref{LorentzFGL}.
\begin{cor} \label{LorentzGenFun}
In the case of the Lorentz formal group law, we have
\begin{equation*}
\Hh(z,q) = \frac{e^{\frac{z}{\alpha}\textnormal{arcsinh}(\alpha q)}-\sqrt{1+\alpha^2q^2}-zq}{z^2-\alpha^2}.
\end{equation*}
\end{cor}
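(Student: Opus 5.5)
Specialize Theorem \ref{HyperbolicClosedFormula} to $\mu_1 = 0$, i.e.\ $\beta = -\alpha$, $\mu_2 = \alpha^2$, and simplify the resulting formulas explicitly. The first step is to compute $\omega(q)$ from \eqref{DefOmega}. With $\beta=-\alpha$,
\begin{equation*}
\omega(q) = \frac{\alpha\Phi_{-\alpha}(q)+\alpha\Phi_{\alpha}(q)}{2\alpha} = \frac{1}{2}\left(\frac{e^{\alpha q}-1}{\alpha}+\frac{1-e^{-\alpha q}}{\alpha}\right) = \frac{\sinh(\alpha q)}{\alpha}.
\end{equation*}
Its compositional inverse is therefore $\tau(q) = \frac{1}{\alpha}\textnormal{arcsinh}(\alpha q)$.

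Next I would compute $\Hh(0,q)$. We have
\begin{equation*}
\nu(q) = \frac{\Phi_\alpha(q)-\Phi_{-\alpha}(q)}{2\alpha} = \frac{\cosh(\alpha q)-1}{\alpha^2}.
\end{equation*}
Substituting $\tau(q)$ and using $\cosh(\textnormal{arcsinh}\,x) = \sqrt{1+x^2}$ gives
\begin{equation*}
\Hh(0,q) = \frac{\sqrt{1+\alpha^2q^2}-1}{\alpha^2}.
\end{equation*}

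Finally, I would insert these into the second form of the formula in Theorem \ref{HyperbolicClosedFormula}. With $\mu_1 = 0$ the denominator is $z^2-\alpha^2$. In the numerator, $\Phi_z(\tau(q)) = (e^{z\tau(q)}-1)/z$, so
\begin{equation*}
z(\Phi_z(\tau(q))-q) = e^{\frac{z}{\alpha}\textnormal{arcsinh}(\alpha q)}-1-zq.
\end{equation*}
The remaining term is $-\mu_2\Hh(0,q) = -\bigl(\sqrt{1+\alpha^2q^2}-1\bigr)$. The two constants $-1$ and $+1$ cancel, which yields exactly the stated expression.

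There is no real obstacle here. The only points needing care are the sign convention $\mu_2=-\alpha\beta=\alpha^2$ and checking that the formula is a genuine power series despite the apparent pole at $z=\pm\alpha$. Both follow from Theorem \ref{HyperbolicClosedFormula} itself.
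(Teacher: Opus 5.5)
Your proposal is correct and follows essentially the same route as the paper. Both specialize Theorem \ref{HyperbolicClosedFormula} to $\beta=-\alpha$, identify $\omega(q)=\sinh(\alpha q)/\alpha$ with inverse $\tau(q)=\alpha^{-1}\textnormal{arcsinh}(\alpha q)$, obtain $\Hh(0,q)=(\sqrt{1+\alpha^2q^2}-1)/\alpha^2$ via $\cosh\circ\textnormal{arcsinh}$, and substitute, with your explicit cancellation of the constants $\pm 1$ (using $\mu_2=\alpha^2$) spelling out the step the paper leaves to the reader.
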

\begin{proof}
By definition, namely \eqref{DefOmega} with $\beta = -\alpha$, we have
\begin{equation*}
\omega(q) = \frac{\alpha\Phi_{-\alpha}(q)+\alpha\Phi_{\alpha}(q)}{2\alpha} = \frac{e^{\alpha q}-e^{-\alpha q}}{2\alpha} = \frac{\text{sinh}(\alpha q)}{\alpha},
\end{equation*}
which has compositional inverse
\begin{equation} \label{tauarcsinh}
\tau(q) = \frac{\text{arcsinh}(\alpha q)}{\alpha}.
\end{equation}
By Theorem \ref{HyperbolicClosedFormula}, we have
\begin{equation*}
\Hh(0,q) = \frac{\Phi_{\alpha}(\tau)-\Phi_{-\alpha}(\tau)}{2\alpha} = \frac{\text{cosh}(\alpha\tau)-1}{\alpha^2},
\end{equation*}
and substituting \eqref{tauarcsinh} gives
\begin{equation} \label{H0qLorentz}
\Hh(0,q) = \frac{\sqrt{1+\alpha^2q^2}-1}{\alpha^2}.
\end{equation}
Substituting \eqref{tauarcsinh} and \eqref{H0qLorentz} into the formula for $\Hh(z,q)$ in Theorem \ref{HyperbolicClosedFormula} then yields the result.
\end{proof}
\begin{exmp}
Consider the case of $K$-theory, that is set $\alpha = 0$ in Theorem \ref{HyperbolicClosedFormula}. We have $\omega(q) = \Phi_{0}(q) = q$, which implies that $\tau(q) = q$. By Theorem \ref{HyperbolicClosedFormula}, we get
\begin{equation*}
\Hh(z,q) = \frac{e^{(z+\beta)q}-1-(z+\beta)q}{(z+\beta)^2},
\end{equation*}
which implies that
\begin{equation*}
\int_{\oM_{0,n}}^K \psi_1^j = \beta^{n-3-j}{n-3 \choose j},
\end{equation*}
agreeing with the general closed formula from \eqref{KClosedFormula}. In the Chow ring, that is by setting $\alpha = \beta = 0$, we get
\begin{equation*}
\Hh(z,q) = \frac{e^{zq}-1-zq}{z^2},
\end{equation*}
that is
\begin{equation*}
  \int_{\oM_{0,n}}^{\text{CH}} \psi_1^j =
  \begin{cases}
    1 , & j = n-3 , \\
    0 , & \text{otherwise} ,
  \end{cases}
\end{equation*}
as expected. 
\end{exmp}

\section{Localization} \label{LocalizationSection}

\subsection{Localization in cobordism}

In this section we give an alternative recursion for the cobordism classes $[\oM_{0,n}] \in \L^{*}$ using the technique of equivariant localization. Equivariant algebraic cobordism for smooth varieties was introduced by Deshpande \cite{Deshpande}, and later developed for schemes independently by Krishna \cite{Krishna2} and Heller--Malag\'{o}n-L\'{o}pez \cite{HML}. We restrict our attention in this section to the case of $\C^*$-actions, where over $k$ we understand $\C^*$ to mean $\G_{m}$. By \cite[Proposition 6.5]{Krishna2}, the $\C^*$-equivariant algebraic cobordism ring of $\Spec k$ is 
\begin{equation*}
\Omega_{\C^*}^{*}(\Spec k) \cong \L^*[\![t]\!],
\end{equation*}
where $t$ has degree 1, in particular is the equivariant first Chern class of the line bundle corresponding to the one-dimensional $\C^*$-representation of weight 1. Note that we have \cite{Krishna2} for two equivariant line bundles $L_{1},L_{2} \in \text{Pic}^{\C^*}(X)$,
\begin{equation*}
c_{1}^{\C^*}(L_{1} \otimes L_{2}) = c_{1}^{\C^*}(L_{1})+_{\widetilde\Omega} c_{1}^{\C^*}(L_{2}). 
\end{equation*}
Using the notation of Krishna \cite{Krishna1}, let $S = (\L^{*} \otimes_{\Z} \Q)[\![t]\!]$ and $S[M^{-1}]$ be the ring obtained from $S$ by inverting all elements of the form $mt$ for $m \in \Z \setminus \{0\}$. Let $X$ be a smooth projective variety with $\C^{*}$-action, and denote by $X^{\C^{*}}$ the fixed-point locus $\coprod F$. Here the component $F$ has codimension $d_{F}$ in $X$, and $i_{F} \colon F \to X$ is the inclusion with normal bundle $N_{F}X$. Then, the localization formula of \cite[Corollary 7.3]{Krishna1} says that for any $\alpha \in \Omega_{\C^*}^{*}(X;\Q) \otimes_{S} S[M^{-1}]$, where $\Omega_{\C^*}^{*}(X;\Q)$ is the $\C^*$-equivariant algebraic cobordism ring of $X$ with rational coefficients, we have
\begin{equation*}
\alpha = \sum_{F} (i_{F})_{*}\frac{i_{F}^{*}(\alpha)}{c_{d_F}^{\C^{*}}(N_{F}X)}.
\end{equation*}
Pushing forward to a point in equivariant cobordism gives
\begin{equation} \label{CobordismLocalizationFormula}
\int_{X}^{\Omega_{\C^*}} \alpha = \sum_{F} \int_{F}^{\Omega_{\C^*}} \frac{i_{F}^{*}(\alpha)}{c_{d_F}^{\C^{*}}(N_{F}X)}.
\end{equation}

Inspired by ideas of Faber--Pandharipande \cite{FaberPandharipande} in higher genus and ordinary cohomology, we will apply cobordism localization to $\oM_{0,n}(\P^1,1)$, the moduli space of genus zero stable maps to $\P^1$ of degree one, with $n$ marked points. Note that $\oM_{0,n}(\P^1,1)$ is isomorphic to $\P^1[n]$, the Fulton--MacPherson compactification \cite{FultonMacPherson} of the configuration space of $n$ points in $\P^1$. Consider the $\C^{*}$-action on $\P^1$ given by $s \cdot [x,y] = [x,sy]$. There are precisely two fixed points,
\begin{equation*}
p_{0} = [1,0] \hspace{0.5cm} \text{and} \hspace{0.5cm} p_{\infty} = [0,1].
\end{equation*}
Note that the action of $\C^*$ on the tangent spaces $T_{p_0}\P^1$ and $T_{p_\infty}\P^1$ have weights $1$ and $-1$ respectively. Now, the action of $\C^*$ on $\P^1$ induces an action on $\oM_{0,n}(\P^1,1)$ by translation of maps, i.e. for a stable map $f$ we have $s \cdot[f] = [s \cdot f]$. As above, let $t$ be the equivariant first Chern class of the line bundle corresponding to the $\C^*$-action on $T_{p_0}\P^1$. For each $n \ge 3$, define
\begin{equation} \label{AnDef}
A_{n}(t) = \int_{\oM_{0,n}}^\Omega \frac{1}{t+_{\Omega}\psi_1},
\end{equation}
where in this pushforward we treat $t$ as a formal variable, and form the exponential generating function
\begin{equation} \label{EDef}
E(t,q) = t-q+\sum_{n \ge 3} \frac{q^{n-1}}{(n-1)!}A_{n}(t).
\end{equation}
\begin{thm} \label{CobordismLocalization}
The values of $\int_{\oM_{0,n}}^\Omega \psi_1^{j}$ for all $0 \le j \le n-3$ are uniquely determined by the condition that the coefficients of all negative powers of $t$ (coefficient-wise in $q$) in the expression
\begin{equation*}
\frac{\partial E(t,q)}{\partial q}\cdot \frac{E\bigl(\overline{t},q\bigr)}{\overline{t}}
\end{equation*}
are equal to zero, together with the fact that $\int_{\oM_{0,3}}^\Omega 1 = 1$. 
\end{thm}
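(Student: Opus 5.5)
We apply the localization formula \eqref{CobordismLocalizationFormula} to $X = \oM_{0,n}(\P^1,1)$ with the induced $\C^*$-action, and to the class $\alpha = \text{ev}_1^*(h)$, where $h$ is the equivariant lift of $c_1^\Omega(\Oo_{\P^1}(1))$. We choose the linearization so that $h$ restricts to $0$ at $p_\infty$ and to a nonzero multiple of $t$ (a power series in $t$) at $p_0$. The left-hand side $\int^{\Omega_{\C^*}}_X \text{ev}_1^*(h)$ lies in $\L^*[\![t]\!]$, so after the localization identity it has no negative powers of $t$. Note also that the non-equivariant pushforward is $[\oM_{0,n-1}(\P^1,1)]$-type data, which we will not need.

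\textbf{Fixed loci.} A fixed stable map of degree one consists of the degree-one component mapping isomorphically to $\P^1$, with contracted trees attached only over $p_0$ and $p_\infty$. The marked points therefore split into a subset $S$ over $p_0$ and its complement $S^c$ over $p_\infty$. The fixed component is $F_S = \oM_{0,|S|+1} \times \oM_{0,|S^c|+1}$, with the convention that a set of size $1$ gives a point (the marking lies on the main component) and the empty set gives a point with no node. We need $1 \in S$, since otherwise $\text{ev}_1^*h$ restricts to zero. The normal bundle $N_{F_S}X$ has two parts. The first is the smoothing of each node, contributing $c_1^{\C^*}$ of $T_{p_0}\P^1 \otimes L_\bullet^\vee$, that is $t +_{\widetilde\Omega} c_1(L_\bullet^\vee)$, which in our convention becomes a unit times $(t+_\Omega \psi_\bullet)$, and similarly $(\bar t +_\Omega \psi_\bullet)$ at $p_\infty$. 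The second is the movement of the node point or marked point along $\P^1$, contributing the weights $t$ and $\bar t$.

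\textbf{Assembling the generating function.} For each $S$ the fixed-locus contribution should factor as (a factor depending on $|S|$ at $p_0$) times (a factor depending on $|S^c|$ at $p_\infty$). The $p_0$ factor is $\int_{\oM_{0,|S|+1}}\frac{1}{t+_\Omega\psi_\bullet}$ times the numerator contribution from $h$. The degenerate cases, where $|S|$ equals $0$ or $1$, produce the terms $t$ and $-q$ in $E$; the exact normalization of these constants is something to be checked, ideally against the Chow case. Summing over $n$ with $q^{n-1}/(n-1)!$, the condition $1\in S$ becomes the $q$-derivative. The $p_0$ side then gives $\partial_q E(t,q)$, and the $p_\infty$ side gives $E(\bar t,q)/\bar t$, the extra $1/\bar t$ coming from the tangent weight at $p_\infty$. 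The main obstacle is getting these units and degenerate terms exactly right. This requires expressing equivariant Chern classes of tensor products via $+_{\widetilde\Omega}$ versus $+_\Omega$, using $c_1(L)=\iota(c_1(L^\vee))$ and $\psi(L)=-c_1(L^\vee)$, and also checking that the $h$-factor cancels the $t$-weight of moving the node. I would first verify the whole expression in the Chow case, where $\Hh=(e^{zq}-1-zq)/z^2$, to fix the conventions.

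\textbf{Uniqueness.} Expand $A_n(t)=\sum_{j=0}^{n-3} t^{-j-1}$ (times series in $t$) $\int\psi_1^j$, which is valid since $1/(t+_\Omega\psi)$ is a Laurent expansion in $\psi/t$. Extract the coefficient of $q^{n-2}/(n-2)!$ in the product. The terms involving $A_n$ appear linearly, multiplied by the leading terms of $E(\bar t,q)/\bar t$ (namely $\bar t/\bar t = 1$), plus products of $A_m$ with $m<n$. The coefficients of $t^{-1},\dots,t^{-(n-2)}$ then form a unitriangular system in the unknowns $\int_{\oM_{0,n}}\psi_1^j$, $0\le j\le n-3$, with lower-order terms. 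Induction on $n$, starting from $\int_{\oM_{0,3}}1=1$, determines all the values.
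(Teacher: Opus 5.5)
Your strategy is the paper's: localize $\text{ev}_1^*(h_0)$ on $\oM_{0,n}(\P^1,1)$, index fixed loci by $S\ni 1$, then invert a triangular system. Your uniqueness paragraph is fine. The gap is in the step you defer, which is the actual content of the theorem: matching the localization sum with the stated product. Two of the identifications you propose for that step are false.

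\textbf{The node-smoothing factor at $p_0$.} It is $t+_{\widetilde\Omega}c_1^\Omega(L_\bullet^\vee)=t+_{\widetilde\Omega}(-\psi_\bullet)=-\bigl((-t)+_\Omega\psi_\bullet\bigr)$. This is not a unit multiple of $t+_\Omega\psi_\bullet$; already in Chow it is $t-\psi_\bullet$.

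\textbf{The weight at $p_\infty$.} It is the equivariant first Chern class of the weight $-1$ character, namely $\iota(t)$, the inverse for $+_{\widetilde\Omega}$, not $\overline{t}$. These coincide for the additive law, so your proposed Chow check cannot catch this error.

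\textbf{What localization actually gives.} Set $\widetilde A_n(t)=\int^\Omega_{\oM_{0,n}}1/(t+_{\widetilde\Omega}(-\psi_1))$ and $\widetilde E(t,q)=t+q+\sum_{n\ge 3}\frac{q^{n-1}}{(n-1)!}\widetilde A_n(t)$. Then the equivariant integral over $\oM_{0,n}(\P^1,1)$ is the coefficient of $q^{n-1}/(n-1)!$ in $\partial_q\widetilde E(t,q)\cdot\widetilde E(\iota(t),q)/\iota(t)$. The $+q$ reflects that a single marking at $p_0$ contributes $+1$. This is why your $-q$ and $\overline t$ would not come out of the computation as you set it up.

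\textbf{The missing idea: the substitution $t\mapsto -t$.} From $x+_{\widetilde\Omega}y=-((-x)+_\Omega(-y))$ you get $\widetilde A_n(-t)=-A_n(t)$, hence $E(t,q)=-\widetilde E(-t,q)$. You also have $\overline t=-\iota(-t)$. Therefore
\begin{equation*}
\frac{\partial E(t,q)}{\partial q}\cdot\frac{E(\overline t,q)}{\overline t}=-\frac{\partial\widetilde E(-t,q)}{\partial q}\cdot\frac{\widetilde E(\iota(-t),q)}{\iota(-t)}.
\end{equation*}
Replacing $t$ by $-t$ preserves the vanishing of all negative-power coefficients, so the condition in the statement is equivalent to the one localization produces.

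Put differently, the natural variable for $E$ is $u=-t$, which is $\psi$ of the cotangent character $T_{p_0}^\vee\P^1$, not $t$ itself. In terms of $u$, every unit you anticipated is just $-1$:
\begin{itemize}
\item the smoothing factors are $-(u+_\Omega\psi_\bullet)$ at $p_0$ and $-(\overline u+_\Omega\psi_\bullet)$ at $p_\infty$;
\item the edge factor is $1/(u\overline u)$;
\item $h_0|_{p_0}=-u$.
\end{itemize}
With this in place, your triangularity argument goes through unchanged.
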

\begin{proof}
Consider the hyperplane class $h \in \Omega^{1}(\P^{1})$, namely $h = [\text{pt} \to \P^1]$ where $\text{pt} = \Spec k$, here using the geometric description of algebraic cobordism \cite{LevinePandharipande}. We lift $h$ to an equivariant class $h_{0} \in \Omega_{\C^*}^{1}(\P^1)$ as follows. We choose a linearization of $\Oo_{\P^1}(1)$ where the weights over the fixed points $p_{0}$ and $p_{\infty}$ are $1$ and $0$ respectively, and define $h_{0} = c_{1}^{\C^*}(\Oo_{\P^1}(1))$. That is, we have
\begin{equation*}
h_{0} \vert_{p_0} = t \hspace{0.5cm} \text{and} \hspace{0.5cm} h_{0} \vert_{p_\infty} = 0.
\end{equation*}
We will apply the localization formula \eqref{CobordismLocalizationFormula} to the equivariant class $\text{ev}_{1}^{*}(h_0) \in \Omega_{\C^*}^{1}(\oM_{0,n}(\P^1,1))$, where $\text{ev}_{1} \colon \oM_{0,n}(\P^1,1) \to \P^1$ is the evaluation map at the marked point labeled by $1$. Note that we have \cite[Section 2.2]{Krishna1}
\begin{equation*}
\int_{\oM_{0,n}(\P^1,1)}^\Omega \text{ev}_{1}^{*}(h) = \left(\int_{\oM_{0,n}(\P^1,1)}^{\Omega_{\C^*}} \text{ev}_{1}^{*}(h_0)\right) \bigg\vert_{t = 0},
\end{equation*}
where the left-hand side is the non-equivariant pushforward, and the right-hand side is the result of taking the equivariant pushforward, which is an element of $\L^{*}[\![t]\!]$ and hence has no negative powers of $t$, and substituting $t = 0$. 

In order to apply the localization formula, we need to identify the fixed loci and the top Chern classes of their normal bundles. Following the discussion in Okounkov--Pandharipande \cite{OkounkovPandharipande}, we know that the fixed locus is the disjoint union of components
\begin{equation*}
\oM_{0,n}(\P^1,1)^{\C^*} = \coprod_{T \subset \{1,\dots,n\}} \oM_{0,\vert{T}\vert+1} \times \oM_{0,\vert{T^{c}}\vert+1},
\end{equation*}
with the convention that if $\vert{T}\vert \le 1$ or $\vert{T^c}\vert \le 1$, then the corresponding factor in the product is taken to be a point. Here $\oM_{0,\vert{T}\vert+1} \times \oM_{0,\vert{T^{c}}\vert+1}$ corresponds to the locus of stable maps where the domain curve has a unique component mapping isomorphically to $\P^1$, with two (possibly reducible) contracted components containing markings labeled by $T$ and $T^c$, with the two nodes lying over $p_0$ and $p_\infty$ respectively. If $\vert{T}\vert = 1$ or $\vert{T^c}\vert = 1$, then there is no corresponding contracted component, and the marked point on the non-contracted component maps to $p_0$ or $p_\infty$ respectively. 

Equivalently, one can view the fixed-point locus as being indexed by graphs with two vertices (labeled by $p_0$ and $p_\infty$) and one edge, with half-edges (markings) incident to $p_0$ labeled by elements of $T$ and half-edges incident to $p_{\infty}$ labeled by elements of $T^c$. We call a vertex \textit{stable} if the number of half edges incident to it is at least 2. 

Following the same deformation sequence argument as in \cite[Section 6.3]{OkounkovPandharipande} but replacing the additive formal group law with the universal formal group law when taking first Chern classes of tensor products and duals of line bundles, we have for each component $F$ of the fixed locus,
\begin{equation*}
\frac{1}{c_{d_F}^{\C^*}(N_{F}\oM_{0,n}(\P^1,1))} = \frac{1}{N(p_0)} \cdot \frac{1}{N(p_{\infty})}\cdot \frac{1}{N(e)},
\end{equation*}
where the vertex and edge contributions $N(p_0)$, $N(p_{\infty})$ and $N(e)$ are given by the following:
\begin{itemize}
\item If $p_0$ is a stable vertex, then
\begin{equation*}
\frac{1}{N(p_0)} = \frac{1}{t+_{\widetilde\Omega}(-\psi_1)}.
\end{equation*}
If $p_\infty$ is a stable vertex, then
\begin{equation*}
\frac{1}{N(p_\infty)} = \frac{1}{\iota(t)+_{\widetilde\Omega}(-\psi_1)}.
\end{equation*}
\item If $p_0$ or $p_\infty$ has exactly one marking, then in either case $i \in \{0,\infty\}$ we have
\begin{equation*}
\frac{1}{N(p_i)} = 1.
\end{equation*}
\item If $p_0$ or $p_\infty$ have no markings, then
\begin{equation*}
\frac{1}{N(p_0)} = t \hspace{0.5cm} \text{or} \hspace{0.5cm} \frac{1}{N(p_\infty)} = \iota(t),
\end{equation*}
respectively. 
\item We have
\begin{equation*}
\frac{1}{N(e)} = \frac{1}{t\cdot\iota(t)}.
\end{equation*}
\end{itemize}
Define
\begin{equation*}
\widetilde{A}_{n}(t) = \int_{\oM_{0,n}}^{\Omega} \frac{1}{t+_{\widetilde\Omega} (-\psi_1)} \hspace{0.5cm} \text{and} \hspace{0.5cm} \widetilde{E}(t,q) = t+q+\sum_{n \ge 3} \frac{q^{n-1}}{(n-1)!}\widetilde{A}_{n}(t) .
\end{equation*}
By the localization formula \eqref{CobordismLocalizationFormula}, it follows that
\begin{equation} \label{LocalizationM0n}
\int_{\oM_{0,n}(\P^1,1)}^{\Omega_{\C^*}} \text{ev}_{1}^{*}(h_0) = \widetilde{A}_{n+1}(t)+\frac{\widetilde{A}_{n}(\iota(t))}{\iota(t)}+\frac{n-1}{\iota(t)}\widetilde{A}_{n}(t)+\frac{1}{\iota(t)}\sum_{j=2}^{n-2}{n-1 \choose j} \widetilde{A}_{n-j+1}(t)\widetilde{A}_{j+1}(\iota(t)).
\end{equation}
Suppose we know inductively all one-point psi-class intersections on $\oM_{0,m}$ for $m \le n$, where $\int_{\oM_{0,3}}^\Omega 1 = 1$. Then, we know $\widetilde{A}_{m}(t)$ for all $m \le n$, where $\widetilde{A}_{3}(t) = 1/t$. Since the coefficients of all negative powers of $t$ in the sum on the right-hand side of \eqref{LocalizationM0n} are equal to zero, it follows that we know the coefficients of all negative powers of $t$ in $\widetilde{A}_{n+1}(t)$. 

This determines all one-point psi-class intersections on $\oM_{0,n+1}$ as follows. Writing $x+_{\widetilde\Omega} y = x+y-xyQ(x,y)$ where $Q \in 
\L^*[\![x,y]\!]$, we have that
\begin{align*}
\widetilde{A}_{n+1}(t) &= \int_{\oM_{0,n+1}}^\Omega \frac{1}{t-\psi_1(1-tQ(t,-\psi_1))} \\
&= \sum_{j=0}^{n-2}\int_{\oM_{0,n+1}}^\Omega \frac{\psi_1^{j}}{t^{j+1}}(1-tQ(t,-\psi_1))^{j}. \\
\end{align*}
Since $\psi_{1}^{n-1} = 0$, the coefficient of $t^{-(n-1)}$ is $\int_{\oM_{0,n+1}}^{\Omega} \psi_1^{n-2}$. The coefficient of $t^{-(j+1)}$ for $0 \le j < n-2$ is of the form
\begin{equation*}
\int_{\oM_{0,n+1}}^{\Omega} \psi_1^j+b_{1}\int_{\oM_{0,n+1}}^{\Omega} \psi_1^{j+1}+\dots+b_{n-2-j}\int_{\oM_{0,n+1}}^{\Omega} \psi_1^{n-2},
\end{equation*}
where the constants $b_{1},\dots,b_{n-2-j} \in \L^*$ are determined by the power series $Q(x,y)$, and hence we know the value of $\int_{\oM_{0,n+1}}^{\Omega} \psi_1^j$ by downward induction. The right-hand side of \eqref{LocalizationM0n} is precisely the coefficient of $q^{n-1}/(n-1)!$ in 
\begin{equation} \label{EquationFirstOne}
\frac{\partial \widetilde{E}(t,q)}{\partial q}\cdot \frac{\widetilde{E}\bigl(\iota(t),q\bigr)}{\iota(t)}.
\end{equation}
By \eqref{UsualFGL}, we have that $E(t,q) = -\widetilde{E}(-t,q)$, and $\overline{t} = -\iota(-t)$. It follows that
\begin{equation} \label{EquationSecondOne}
\frac{\partial E(t,q)}{\partial q}\cdot \frac{E\bigl(\overline{t},q\bigr)}{\overline{t}} = -\frac{\partial \widetilde{E}(-t,q)}{\partial q}\cdot \frac{\widetilde{E}\bigl(\iota(-t),q\bigr)}{\iota(-t)},
\end{equation}
which implies the result, as the right-hand side of \eqref{EquationSecondOne} has no negative powers of $t$ if and only if \eqref{EquationFirstOne} has no negative powers of $t$. 
\end{proof}

\subsection{Localization in the hyperbolic formal group law} 

In this section we give an alternative proof of the closed formula in Theorem \ref{HyperbolicClosedFormula} using Theorem \ref{CobordismLocalization} restricted to the hyperbolic formal group law. 

\begin{proof}[Proof of Theorem \ref{HyperbolicClosedFormula}]
Let $s = -1/t$. By \eqref{InverseHyperbolicFGL}, we have
\begin{equation*}
-\frac{1}{\overline{t}} = \mu_1-s.
\end{equation*}
Defining $\E(s,q) = -sE(-1/s,q)$, we have by Theorem \ref{CobordismLocalization} that
\begin{equation*}
\frac{\partial E(t,q)}{\partial q}\cdot \frac{E(\overline{t},q)}{\overline{t}} = -\frac{1}{s}\frac{\partial \E(s,q)}{\partial q}\cdot \E(\mu_{1}-s,q)
\end{equation*}
has no positive powers of $s$, coefficient-wise in $q$. Moreover, note that
\begin{equation} \label{An1/s}
-A_{n}\biggl(-\frac{1}{s}\biggr) = \int_{\oM_{0,n}}^H \frac{s-\mu_2 \psi_1}{1-\psi_1(s-\mu_1)}
\end{equation}
is polynomial in $s$. Therefore, it follows that
\begin{equation} \label{ProductRule1}
\frac{\partial \E(s,q)}{\partial q}\cdot \E(\mu_{1}-s,q) = sC(q),
\end{equation}
where $C(q)$ is a power series in $q$ over $\Q[\mu_1,\mu_2]$ with constant term 1, in particular it is independent of $s$. Replacing $s$ with $\mu_1-s$ in \eqref{ProductRule1} gives
\begin{equation} \label{ProductRule2}
\frac{\partial \E(\mu_1-s,q)}{\partial q}\cdot \E(s,q) = (\mu_1-s)C(q),
\end{equation}
and hence by adding \eqref{ProductRule1} and \eqref{ProductRule2}, we have
\begin{equation*}
\frac{\partial}{\partial q}(\E(s,q) \cdot \E(\mu_1-s,q)) = \mu_1 C(q). 
\end{equation*}
Since $\E(s,0) \cdot \E(\mu_1-s,0) = 1$, it follows that $\E(s,q) \cdot \E(\mu_1-s,q) = B(q)$ for some power series $B(q)$ over $\Q[\mu_1,\mu_2]$ with constant term $1$. Dividing \eqref{ProductRule1} by this yields
\begin{equation*}
\frac{\partial \E(s,q)}{\partial q} = s\lambda(q)\cdot\E(s,q),
\end{equation*}
where $\lambda(q) = C(q)/B(q)$. Let $\tau(q)$ be a power series with $\tau(0) = 0$ such that $\tau'(q) = \lambda(q)$. Then, since $\E(s,0) = 1$, we have that
\begin{equation*}
\E(s,q) = e^{s\tau(q)},
\end{equation*}
and hence
\begin{equation} \label{ECollapse}
E(t,q) = te^{\frac{-\tau(q)}{t}}.
\end{equation}
We now use that $\mu_1 = \alpha+\beta$ and $\mu_2 = -\alpha\beta$. Observe that if we set $s = \alpha$ in \eqref{An1/s}, we get
\begin{equation*}
-A_{n}\biggl(-\frac{1}{\alpha}\biggr) = \int_{\oM_{0,n}}^H \frac{\alpha+\alpha\beta\psi_1}{1+\beta\psi_1} = \alpha\int_{\oM_{0,n}}^{H} 1.
\end{equation*}
Therefore, by \eqref{EDef}, it follows that
\begin{equation*}
E\biggl(-\frac{1}{\alpha},q\biggr) = -\frac{1}{\alpha}-q-\alpha \Hh(0,q).
\end{equation*}
That is, using \eqref{ECollapse}, we have
\begin{equation*}
\Phi_{\alpha}(\tau(q)) =  q+\alpha\Hh(0,q).
\end{equation*}
Similarly, we also have
\begin{equation*}
\Phi_{\beta}(\tau(q)) =  q+\beta\Hh(0,q),
\end{equation*}
which together imply that
\begin{equation*}
q = \frac{\alpha\Phi_{\beta}(\tau(q))-\beta\Phi_{\alpha}(\tau(q))}{\alpha-\beta} \hspace{0.5cm} \text{and} \hspace{0.5cm} \Hh(0,q) = \frac{\Phi_{\alpha}(\tau(q))-\Phi_{\beta}(\tau(q))}{\alpha-\beta}.
\end{equation*}
Let $z = -\alpha-\beta-1/t$. By \eqref{An1/s}, we have
\begin{equation*}
A_{n}(t) = \frac{1}{t}\int_{\oM_{0,n}}^H \frac{1}{1-z\psi_1}-\alpha\beta\int_{\oM_{0,n}}^H \frac{\psi_1}{1-z\psi_1}.
\end{equation*}
Using \eqref{EDef}, it follows that
\begin{equation*}
E(t,q) = t-q+\frac{1}{t}\Hh(z,q)-\alpha\beta\frac{\Hh(z,q)-\Hh(0,q)}{z}. 
\end{equation*}
Re-arranging gives
\begin{align*}
-z(E(t,q)-t+q)+\alpha\beta\Hh(0,q) &= \biggl(-\frac{z}{t}+\alpha\beta\biggr)\Hh(z,q) \\
&= (z+\alpha)(z+\beta)\Hh(z,q). \\
\end{align*}
By \eqref{ECollapse}, we get that
\begin{equation*}
E(t,q)-t = -\Phi_{z+\alpha+\beta}(\tau(q)),
\end{equation*}
and hence
\begin{equation*}
\Hh(z,q) = \frac{z(\Phi_{z+\alpha+\beta}(\tau(q))-q)+\alpha\beta\Hh(0,q)}{(z+\alpha)(z+\beta)}. 
\end{equation*}
\end{proof}

\section{Twisted $\chi_y$-genera}

\subsection{The $\chi_y$-genus}
We now work over $\C$. Recall \cite{Hirzebruch} that for a smooth proper variety $X$, we have the Hirzebruch $\chi_y$-genus 
\begin{equation*}
\chi_y(X) = \sum_{p} \chi(X,\Omega_X^p)y^p,
\end{equation*}
where $\Omega_X^p$ is the sheaf of algebraic $p$-forms on $X$. We have using Hodge theory that 
\begin{equation*}
\chi_y(X) = \sum_{p,q} (-1)^q \dim_{\C} H^{q}(X,\Omega_X^p)y^p = \sum_{p,q} (-1)^q h^{p,q}(X)y^p.
\end{equation*}
By the Hirzebruch--Riemann--Roch theorem, the (unnormalized) formal power series corresponding to the $\chi_y$-genus is
\begin{equation*}
\overline{Q}_{\chi_y}(u) = \frac{u(1+ye^{-u})}{1-e^{-u}},                                          
\end{equation*}
since if $\lambda_1,\dots,\lambda_{\dim_{k}X}$ are the Chern roots of $T_X$, then
\begin{equation*}
\chi_y(X) = \int_{X}^{\text{CH}} \prod_i \frac{\lambda_i(1+ye^{-\lambda_i})}{1-e^{-\lambda_i}}.
\end{equation*}
We now normalize the series $\overline{Q}_{\chi_y}(u)$ so that it has constant term 1, that is we define
\begin{equation*}
Q_{\chi_y}(u) = \frac{\overline{Q}_{\chi_y}((1+y)u)}{1+y} = \frac{u(1+ye^{-(1+y)u})}{1-e^{-(1+y)u}}.
\end{equation*}
Note that we also have
\begin{equation*}
\chi_y(X) = \int_{X}^{\text{CH}} \prod_{i} \frac{\lambda_i(1+ye^{-(1+y)\lambda_i})}{1-e^{-(1+y)\lambda_i}}.
\end{equation*}
In the convention of \eqref{UsualFGL}, the formal group law $u+_{\widetilde{\chi_y}}v$ corresponding to the $\chi_y$-genus has exponential (which is the compositional inverse of the logarithm $\ell_{\widetilde{\chi_y}}(u)$ as defined for cobordism in \eqref{Logarithm}) 
\begin{equation*}
\exp_{\widetilde{\chi_y}}(u) = \frac{u}{Q_{\chi_y}(u)} = \frac{1-e^{-(1+y)u}}{1+ye^{-(1+y)u}}.
\end{equation*}
It follows then from a direct calculation that 
\begin{equation*}
u+_{\widetilde{\chi_y}} v = \exp_{\widetilde{\chi_y}}(\ell_{\widetilde{\chi_y}}(u)+\ell_{\widetilde{\chi_y}}(v)) = \frac{u+v-(1-y)uv}{1+yuv},
\end{equation*}
and 
\begin{equation*}
u+_{\chi_y} v = -((-u)+_{\widetilde{\chi_y}} (-v)) = \frac{u+v+(1-y)uv}{1+yuv}. 
\end{equation*}
Hence, we can obtain $u+_{\chi_y} v$ from the hyperbolic formal group law (see \eqref{HyperbolicFGL}) by setting $\mu_1 = 1-y$ and $\mu_2 = y$, or $\alpha = 1$ and $\beta = -y$. 
\begin{rem} \label{OrientedRemark}
In order for $u+_{\chi_y} v$ to come from an oriented cohomology theory, we need to keep the formal variable $\beta$ of degree $-1$, so that
\begin{equation*}
u+_{\chi_y} v = \frac{u+v+(1-y)\beta uv}{1+y\beta^2uv}.
\end{equation*}
\end{rem}

\subsection{Poincar\'{e} polynomials of $\oM_{0,n}$}
We now explain how to recover known generating function formulas for the Poincar\'{e} polynomials of $\oM_{0,n}$, in particular we give a new proof of formulas of Getzler and Manin \cite{Getzler,Manin}. Define
\begin{equation*}
\text{P}_{n}(z) = \sum_{j = 0}^{n-3}\dim_{\C} H^{2j}(\oM_{0,n},\C)z^j,
\end{equation*}
and following the convention in \cite{Cotti}, define
\begin{equation*}
\F(z,q) = q+\sum_{n \ge 3} \frac{q^{n-1}}{(n-1)!} \text{P}_{n}(z).
\end{equation*}
\begin{cor}[\cite{Getzler,Manin}] \label{GetzlerManin}
The series $\F(z,q)$ is uniquely determined by the differential equation
\begin{equation} \label{GetzlerManin1}
\frac{\partial \F(z,q)}{\partial q} = \frac{1+\F}{1+zq-z \F},
\end{equation}
together with the initial condition $\F(z,0) = 0$. Equivalently, it is the unique power series in $q$ with coefficients polynomial in $z$ and $\F(z,0) = 0$ satisfying
\begin{equation} \label{GetzlerManin2}
(1+\F)^{z} = z^2\F-z(z-1)q+1.
\end{equation}
\end{cor}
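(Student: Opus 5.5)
The plan is to identify $\text{P}_n(z)$ with a hyperbolic pushforward and then specialize the results of Section \ref{HyperbolicFGLSection}. By Keel, $\text{P}_n(z) = \chi_{-z}(\oM_{0,n})$. The $\chi_y$-genus is the genus of the hyperbolic formal group law with $\mu_1 = 1-y$, $\mu_2 = y$, equivalently $\alpha = 1$, $\beta = -y$, after setting the degree $-1$ variable of Remark \ref{OrientedRemark} equal to $1$. Hence $\int_{\oM_{0,n}}^H 1 = \chi_y(\oM_{0,n})$ under this specialization. Putting $y=-z$, so $\mu_1 = 1+z$, $\mu_2 = -z$, $\alpha = 1$, $\beta = z$, the series $\Hh(0,q)$ of Corollary \ref{HyperbolicDE} becomes
\begin{equation*}
\Hh(0,q) = \F(z,q) - q .
\end{equation*}
To avoid a clash of notation, I use the $z=0$ specialization of the auxiliary variable in $\Hh$, and write $z$ only for the Poincar\'{e} variable.

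For the differential equation \eqref{GetzlerManin1}, I will evaluate Corollary \ref{HyperbolicDE} at its auxiliary variable equal to $0$:
\begin{equation*}
(1+\mu_2\Hh(0,q))\,\partial_q\Hh(0,q) = \mu_1\Hh(0,q)+q .
\end{equation*}
Substituting the values of $\mu_1,\mu_2$ and $\Hh(0,q) = \F-q$ gives
\begin{equation*}
(1+zq-z\F)(\partial_q\F - 1) = (1+z)(\F-q)+q .
\end{equation*}
Moving $(1+zq-z\F)$ to the right-hand side, the terms simplify to $1+\F$, which is exactly \eqref{GetzlerManin1}. Uniqueness given $\F(z,0)=0$ holds for the same reason as in Corollary \ref{HyperbolicDE}: the equation determines the coefficient of $q^{m}$ in $\F$ from the lower-order coefficients. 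It is also inherited directly from the uniqueness statement there.

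For \eqref{GetzlerManin2}, I will specialize Theorem \ref{HyperbolicClosedFormula} with $\alpha=1$, $\beta=z$. The proof of that theorem (and its localization reproof) gives
\begin{equation*}
\Phi_\alpha(\tau(q)) = q+\alpha\Hh(0,q), \qquad \Phi_\beta(\tau(q)) = q+\beta\Hh(0,q).
\end{equation*}
\begin{itemize}
\item With $\alpha=1$, the first identity reads $e^{\tau}-1 = \F$, so $e^{\tau} = 1+\F$.
\item With $\beta=z$, the second reads $(e^{z\tau}-1)/z = q+z(\F-q)$, so $e^{z\tau} = 1+z^2\F-z(z-1)q$.
\end{itemize}
Comparing $e^{z\tau} = (e^{\tau})^z$ gives \eqref{GetzlerManin2}. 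Here $(1+\F)^z$ is interpreted as $\exp(z\log(1+\F))$, which makes sense as a power series in $q$ since $\F(z,0)=0$.

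As a consistency check, or as an alternative route, differentiating \eqref{GetzlerManin2} gives $z(1+\F)^{z-1}\F' = z^2\F' - z(z-1)$. Multiplying by $(1+\F)$ and substituting \eqref{GetzlerManin2} recovers \eqref{GetzlerManin1}, which confirms the equivalence.

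The step I expect to need the most care is uniqueness for \eqref{GetzlerManin2}. Extracting the coefficient of $q^m$ with $m\ge 2$ gives $(z-z^2)\F_m$ plus terms that depend only on $\F_1,\dots,\F_{m-1}$. So $\F_m$ is determined over $\Q(z)$, and polynomiality in $z$ is what singles out the actual solution. At $m=1$ the equation reads $z\F_1 = z^2\F_1 - z(z-1)$, which forces $\F_1 = 1$, consistent with the leading term $q$ of $\F$. I expect the main obstacle to be keeping the specialization of Remark \ref{OrientedRemark} straight, that is, confirming that setting the grading variable to $1$ is compatible with Corollary \ref{HyperbolicDE}. This holds because that corollary is an identity of graded series in which every term is homogeneous.
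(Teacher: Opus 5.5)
Your proposal is correct and follows the paper's proof essentially step for step: the identification $\text{P}_n(z)=\chi_{-z}(\oM_{0,n})$, then the auxiliary-variable-zero case of Corollary \ref{HyperbolicDE} with $\mu_1=1+z$, $\mu_2=-z$ and $\Hh(0,q)=\F-q$, then the specialization $\alpha=1$, $\beta=z$ of Theorem \ref{HyperbolicClosedFormula} to get $e^{\tau}=1+\F$ and $e^{z\tau}=1+z^2\F-z(z-1)q$. The only variation is that the paper reaches these two identities by adding its displayed formulas for $q$ and $\Hh(0,q)$ rather than quoting $\Phi_\alpha(\tau)=q+\alpha\Hh(0,q)$ directly. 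Your coefficient-extraction argument for uniqueness of \eqref{GetzlerManin2}, which the paper leaves implicit, is sound and in fact gives uniqueness already in $\Q(z)[\![q]\!]$, so polynomiality in $z$ is not actually needed to single out the solution.
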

\begin{proof}
By Keel \cite{Keel}, we know that the Hodge structure on the cohomology of $\oM_{0,n}$ is of pure Tate type. That is, we have $h^{p,q}(\oM_{0,n}) = 0$ for $p \neq q$, and so
\begin{equation*}
\chi_y(\oM_{0,n}) = \sum_{p,q} (-1)^qh^{p,q}(\oM_{0,n})y^p = \sum_{p} (-1)^p \dim_{\C}H^{2p}(\oM_{0,n},\C)y^p.
\end{equation*}
Hence, we have $\text{P}_{n}(z) = \chi_{-z}(\oM_{0,n})$, and so
\begin{equation*}
\F(z,q) = q+\sum_{n \ge 3} \frac{q^{n-1}}{(n-1)!}\int_{\oM_{0,n}}^{\chi_{-z}} 1.
\end{equation*}
Here these pushforwards are calculated in the associated oriented cohomology in the sense of Remark \ref{OrientedRemark} with $\beta = 1$ set afterwards. Using the differential equation of Corollary \ref{HyperbolicDE} in the case $z = 0$, and then replacing $\Hh(0,q)$ with $\F(z,q)-q$ and setting $\mu_1 = 1+z$ and $\mu_2 = -z$ gives
\begin{equation*}
(1-z(\F-q))\left(\frac{\partial \F(z,q)}{\partial q}-1\right) = (1+z)(\F-q)+q. 
\end{equation*}
Re-arranging yields \eqref{GetzlerManin1}. Next, we use our closed formula from Theorem \ref{HyperbolicClosedFormula}. Substituting $\alpha = 1$ and $\beta = z$ into \eqref{DefOmega} gives 
\begin{equation*}
\omega(q) = \frac{\Phi_{z}(q)-z\Phi_1(q)}{1-z} = \frac{e^{zq}-1-z^2(e^q-1)}{z(1-z)}. 
\end{equation*}
Since $\tau(q)$ is by definition the compositional inverse of $\omega(q)$, we have 
\begin{equation} \label{equationone}
q = \frac{e^{z\tau}-1-z^2(e^\tau-1)}{z(1-z)}.
\end{equation}
However, by the $z = 0$ case of Theorem \ref{HyperbolicClosedFormula}, we have
\begin{equation} \label{equationtwo}
\F-q = \frac{z(e^{\tau}-1)-(e^{z\tau}-1)}{z(1-z)}.
\end{equation}
Adding \eqref{equationone} and \eqref{equationtwo} gives $\F = e^{\tau}-1$. Substituting this into \eqref{equationtwo} then gives
\begin{equation*}
\F-q = \frac{z\F-(1+\F)^{z}+1}{z(1-z)},
\end{equation*}
which after re-arranging yields the result. 
\end{proof}

\begin{rem}
The quadratic recurrence for the Poincar\'{e} polynomials of $\oM_{0,n}$, due to Keel \cite{Keel} and Manin \cite{Manin},
\begin{equation*}
\text{P}_{n+1}(z) = (1+z)\text{P}_{n}(z)+z\sum_{\substack{i+j = n \\ i,j \ge 2}} {n-1 \choose i} \text{P}_{i+1}(z)\text{P}_{j+1}(z),
\end{equation*}
follows directly from the string equation as given in Proposition \ref{NewStringEquation}, using that $\text{P}_{n}(z) = \chi_{-z}(\oM_{0,n})$. 
\end{rem}

\subsection{Twisted $\chi_y$-genera of $\oM_{0,n}$}
For a smooth proper variety $X$ over $\C$ and a holomorphic line bundle $L$ on $X$, we consider \cite{Hirzebruch} the $\chi_y$ genus twisted by $L$
\begin{equation*}
\chi_y(X,L) := \sum_{p} \chi(X,\Omega_{X}^{p} \otimes L)y^p = \sum_{p,q} (-1)^q \dim_{\C} H^{q}(X,\Omega_{X}^p \otimes L)y^p.
\end{equation*}
We will consider the case $X = \oM_{0,n}$ and $L = L_{1}^{j}$, that is we look at tensor powers of a single tautological line bundle on $\oM_{0,n}$. Form the exponential generating function
\begin{equation*}
G(z,q) = \sum_{n \ge 3} \frac{q^{n-1}}{(n-1)!}\sum_{j \ge 0} z^{j} \chi_y(\oM_{0,n},L_1^j).
\end{equation*}
As in Section \ref{HyperbolicFGLSection} and the proof of Corollary \ref{GetzlerManin}, recall for any parameter $\lambda$ that
\begin{equation*}
\Phi_{\lambda}(q) = \sum_{j \ge 1} \lambda^{j-1}\frac{q^j}{j!} = \frac{e^{\lambda q}-1}{\lambda},
\end{equation*}
where $\Phi_0(q) = q$, and let $\tau(q)$ be the compositional inverse of the series
\begin{equation*}
\omega(q) = \frac{\Phi_{-y}(q)+y\Phi_{1}(q)}{1+y}.
\end{equation*}
\begin{prop} \label{TwistedChiy}
The generating function $G(z,q)$ is given by
\begin{equation*}
G(z,q) = \frac{\Phi_{\frac{1+yz}{1-z}}(\tau(q))-\Phi_{-y}(\tau(q))}{1+y}.
\end{equation*}
\end{prop}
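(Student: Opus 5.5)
The plan is to write each twisted $\chi_y$-genus as a cobordism pushforward of a rational function of $\psi_1$ in the oriented theory of the $\chi_y$-genus (Remark \ref{OrientedRemark} with $\beta=1$, that is the hyperbolic formal group law with $\alpha=1$, $\beta=-y$). This lets us express $G(z,q)$ through $\Hh(w,q)$ at a suitable value $w$ and through $\Hh(0,q)$. We then substitute the closed formula of Theorem \ref{HyperbolicClosedFormula}.

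\emph{Step 1 (Riemann--Roch).} By Hirzebruch--Riemann--Roch,
\begin{equation*}
\chi_y(X,L)=\int_X^{\text{CH}}\text{ch}(L)\prod_i\overline{Q}_{\chi_y}(\lambda_i).
\end{equation*}
We rescale all classes by $(1+y)$ using the degree grading, as was done in passing from $\overline{Q}_{\chi_y}$ to $Q_{\chi_y}$, and keep the degree $-1$ variable $\beta$ so that the theory is genuinely oriented. The Levine--Morel/Quillen Riemann--Roch theorem for the orientation map to this theory then identifies the right-hand side with $\int^{\chi_y}_X$ of the class corresponding to $e^{(1+y)c}$, where $c=c_1^{\text{CH}}(L)$. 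Next we express this class through $\psi(L)$. The exponential $\exp_{\widetilde{\chi_y}}$ gives
\begin{equation*}
\psi(L)=-\exp_{\widetilde{\chi_y}}(-c)=\frac{X-1}{1+yX},\qquad X=e^{(1+y)c}.
\end{equation*}
Solving for $X$ yields $X=(1+\psi)/(1-y\psi)$. Hence $\chi_y(\oM_{0,n},L_1^j)=\int^{\chi_y}_{\oM_{0,n}}X_1^j$, where $X_1=(1+\psi_1)/(1-y\psi_1)$. I expect this step to be the main obstacle. One has to check carefully that the $(1+y)$-normalization, the sign conventions of \eqref{NewFGL} versus \eqref{UsualFGL}, and the final specialization $\beta=1$ are mutually consistent, so that $e^{c}$ really becomes $(1+\psi)/(1-y\psi)$ and not, say, its inverse. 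As a check, I would verify the case $n=4$, $\oM_{0,4}=\P^1$ with $L_1=\Oo(1)$, directly.

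\emph{Step 2 (summing over $j$).} Summing the geometric series gives
\begin{equation*}
\sum_{j\ge0}z^jX_1^j=\frac{1-y\psi_1}{(1-z)-(z+y)\psi_1}=\frac{1}{1-z}\cdot\frac{1-y\psi_1}{1-w\psi_1},\qquad w=\frac{z+y}{1-z}.
\end{equation*}
Writing $\psi_1/(1-w\psi_1)=\bigl(1/(1-w\psi_1)-1\bigr)/w$ and summing over $n$, we obtain
\begin{equation*}
G(z,q)=\frac{1}{1-z}\Bigl(\Hh(w,q)-\frac{y}{w}\bigl(\Hh(w,q)-\Hh(0,q)\bigr)\Bigr),
\end{equation*}
with $\Hh$ taken at $\alpha=1$, $\beta=-y$.

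\emph{Step 3 (closed form).} For $\alpha=1$, $\beta=-y$, the series $\omega$ of \eqref{DefOmega} is exactly $(\Phi_{-y}+y\Phi_1)/(1+y)$, so $\tau$ is the one in the statement. Theorem \ref{HyperbolicClosedFormula} gives $\Hh(0,q)=(\Phi_1(\tau)-\Phi_{-y}(\tau))/(1+y)$ and
\begin{equation*}
\Hh(w,q)=\frac{w\bigl(\Phi_{w+1-y}(\tau)-q\bigr)-y\Hh(0,q)}{(w+1)(w-y)}.
\end{equation*}
Here $w+1=(1+y)/(1-z)$, $w-y=z(1+y)/(1-z)$ and $w+1-y=(1+yz)/(1-z)$, which already matches the subscript in the claimed formula. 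It remains to substitute into the expression from Step 2. We use $q=\omega(\tau)$ to eliminate $q$ and the formula above for $\Hh(0,q)$. The terms in $\Phi_1(\tau)$ should then cancel against each other, leaving $\bigl(\Phi_{(1+yz)/(1-z)}(\tau)-\Phi_{-y}(\tau)\bigr)/(1+y)$. This final simplification is routine rational-function algebra in $z$ and $y$.
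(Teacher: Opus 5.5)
Your proposal is correct and follows essentially the same route as the paper. The paper likewise uses the $\chi_y$ Riemann--Roch and the generalized Chern character to obtain $\int^{\chi_y}\bigl((1+\psi_1)/(1-y\psi_1)\bigr)^j$, sums the geometric series with $\widetilde{z}=(y+z)/(1-z)$ to express $G$ through $\Hh(\widetilde{z},q)$ and $\Hh(0,q)$, and applies Theorem \ref{HyperbolicClosedFormula} at $\alpha=1$, $\beta=-y$, reaching $G=\bigl(\Phi_{(1+yz)/(1-z)}(\tau)-q+y\Hh(0,q)\bigr)/(1+y)$ before using $q=\omega(\tau)$ to cancel the $\Phi_1$ terms.
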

\begin{proof}
Let $\ell = c_{1}^{\text{CH}}(L_1)$ be the ordinary first Chern class of $L_1$, and let $\lambda_1,\dots,\lambda_{n-3}$ be the ordinary Chern roots of $T_{\oM_{0,n}}$. By the Hirzebruch--Riemann--Roch theorem for the $\chi_y$-characteristic \cite{Yokura}, we have
\begin{equation*}
\chi_{y}(\oM_{0,n},L_{1}^j) = \int_{\oM_{0,n}}^{\text{CH}} e^{j(1+y)\ell} \prod_{i} \frac{\lambda_{i}(1+ye^{-(1+y)\lambda_i})}{1-e^{-(1+y)\lambda_i}}. 
\end{equation*}
The generalized Chern character $\text{ch}^{\chi_y}$ sends $c_{1}^{\chi_y}(L_1)$ to $\text{exp}_{\widetilde{\chi_y}}(\ell)$, and hence
\begin{equation*}
\text{ch}^{\chi_y}(\psi_1) = -\text{exp}_{\widetilde{\chi_y}}(-\ell) = \frac{e^{(1+y)\ell}-1}{1+ye^{(1+y)\ell}},
\end{equation*}
and
\begin{equation*}
\text{ch}^{\chi_y}\biggl(\frac{1+\psi_1}{1-y\psi_1}\biggr) = e^{(1+y)\ell}. 
\end{equation*}
Therefore, in the oriented cohomology theory associated to the $\chi_y$-genus, we have
\begin{equation*}
\chi_{y}(\oM_{0,n},L_{1}^j) = \int_{\oM_{0,n}}^{\chi_y} \biggl(\frac{1+\psi_1}{1-y\psi_1}\biggr)^j.
\end{equation*}
We then have 
\begin{align*}
G(z,q) &= \sum_{n \ge 3} \frac{q^{n-1}}{(n-1)!} \sum_{j \ge 0} \int_{\oM_{0,n}}^{\chi_y} z^j \biggl(\frac{1+\psi_1}{1-y\psi_1}\biggr)^{j} \\
&= \sum_{n \ge 3} \frac{q^{n-1}}{(n-1)!} \int_{\oM_{0,n}}^{\chi_y} \frac{1-y\psi_1}{1-z-(y+z)\psi_1}. \\
\end{align*}
Let $\widetilde{z} = (y+z)/(1-z)$. Then, we have
\begin{equation*}
\frac{1-y\psi_1}{1-z-(y+z)\psi_1} = \frac{1}{1-z} \cdot \frac{1-y\psi_1}{1-\widetilde{z}\psi_1} = \frac{1}{1-z}\biggl(1+(\widetilde{z}-y)\sum_{j \ge 1} \widetilde{z}^{j-1}\psi_1^j\biggr),
\end{equation*}
and so comparing with the generating function in Section \ref{HyperbolicFGLSection},
\begin{align*}
G(z,q) = \frac{1}{1-z}\biggl(\Hh(0,q)+\frac{\widetilde{z}-y}{\widetilde{z}}(\Hh(\widetilde{z},q)-\Hh(0,q))\biggr). 
\end{align*}
By Theorem \ref{HyperbolicClosedFormula} where we set $\alpha = 1$ and $\beta = -y$, we have
\begin{equation*}
\Hh(z,q) = \frac{z(\Phi_{z+1-y}(\tau(q))-q)-y\Hh(0,q)}{(z+1)(z-y)}.
\end{equation*}
A direct calculation gives
\begin{equation*}
G(z,q) = \frac{\Phi_{\frac{1+yz}{1-z}}(\tau(q))-q+y\Hh(0,q)}{1+y}.
\end{equation*}
But again by Theorem \ref{HyperbolicClosedFormula}, we have
\begin{equation*}
\Hh(0,q) = \frac{\Phi_{1}(\tau(q))-\Phi_{-y}(\tau(q))}{1+y},
\end{equation*}
and so since $\tau(q)$ is the compositional inverse of $\omega(q)$,
\begin{align*}
-q+y\Hh(0,q) &= -\biggl(\frac{\Phi_{-y}(\tau(q))+y\Phi_{1}(\tau(q))}{1+y}\biggr)+y\biggl(\frac{\Phi_{1}(\tau(q))-\Phi_{-y}(\tau(q))}{1+y}\biggr) \\
&= -\Phi_{-y}(\tau(q)),
\end{align*}
which gives the result. 
\end{proof}

\begin{cor}
Setting $y = 1$, the generating function for the twisted signatures is given by
\begin{equation*}
G(z,q) = \frac{1}{2}\left(\frac{1-z}{1+z}(e^{\frac{1+z}{1-z}\textnormal{arcsinh}\,q}-1)+e^{-\textnormal{arcsinh}\,q}-1\right).
\end{equation*}
\end{cor}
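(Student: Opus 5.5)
We specialize Proposition \ref{TwistedChiy} to $y = 1$. At $y=1$ the series defining $\tau$ becomes
\begin{equation*}
\omega(q) = \frac{\Phi_{-1}(q)+\Phi_{1}(q)}{2} = \frac{(1-e^{-q})+(e^{q}-1)}{2} = \sinh q,
\end{equation*}
so $\tau(q) = \textnormal{arcsinh}\,q$. This is the same computation as in the proof of Corollary \ref{LorentzGenFun} with $\alpha = 1$. That is expected, since $y=1$ corresponds to $\alpha=1$, $\beta=-1$, i.e.\ $\mu_1 = 0$, which is the Lorentz formal group law.

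Next we substitute into the formula of Proposition \ref{TwistedChiy}. The subscript becomes $\lambda = \frac{1+z}{1-z}$, and we use $\Phi_\lambda(\tau) = (e^{\lambda\tau}-1)/\lambda$. This gives
\begin{equation*}
\Phi_{\frac{1+z}{1-z}}(\tau(q)) = \frac{1-z}{1+z}\left(e^{\frac{1+z}{1-z}\textnormal{arcsinh}\,q}-1\right).
\end{equation*}
For the second term, $\Phi_{-1}(\tau) = 1-e^{-\tau}$, so $-\Phi_{-1}(\tau(q)) = e^{-\textnormal{arcsinh}\,q}-1$. Dividing the sum of these two terms by $1+y = 2$ gives exactly the stated expression.

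No real obstacle is expected: this is a direct specialization. The only points to check are the sign conventions in $\Phi_{-y}$ at $y=1$, and that $\lambda = \frac{1+yz}{1-z}$ is a well-defined formal power series in $z$. The latter holds since $\lambda = 1+O(z)$, so it is invertible. Optionally, one could simplify $e^{-\textnormal{arcsinh}\,q} = \sqrt{1+q^2}-q$, but this is not needed for the stated form.
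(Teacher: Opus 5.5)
Your proposal is correct and follows the paper's proof: both identify $\tau(q)=\textnormal{arcsinh}\,q$ (as in Corollary \ref{LorentzGenFun} with $\alpha=1$) and then substitute $y=1$ directly into Proposition \ref{TwistedChiy}. You also write out the substitutions $\Phi_{-1}(\tau)=1-e^{-\tau}$ and $\Phi_{\frac{1+z}{1-z}}(\tau)=\frac{1-z}{1+z}\bigl(e^{\frac{1+z}{1-z}\tau}-1\bigr)$, which the paper leaves implicit.
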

\begin{proof}
As in the proof of Corollary \ref{LorentzGenFun}, where $\alpha = 1$ in this case, we have
\begin{equation*}
\tau(q) = \text{arcsinh}(q).
\end{equation*}
The result then follows directly by substituting $y = 1$ in Proposition \ref{TwistedChiy}. 
\end{proof}
\begin{rem}
Substituting $y = 0$ in Proposition \ref{TwistedChiy} recovers Lee's formulas \cite{Lee} for the Euler characteristics $\chi(\oM_{0,n},L_{1}^j)$, namely
\begin{equation*}
G(z,q) = (1-z)\bigl(e^{\frac{q}{1-z}}-1\bigr)-q,
\end{equation*}
which gives
\begin{equation*}
\chi(\oM_{0,n},L_{1}^j) = {j+n-3 \choose j}.
\end{equation*}
\end{rem}
\FloatBarrier

\end{document}